\makeatletter \@addtoreset{equation}{section} \makeatother
\renewcommand\thetable{\thesection.\@arabic\c@table}
\theoremstyle{plain}
\newtheorem{maintheorem}{Theorem}
\newtheorem{maincorollary}{Corollary}
\newtheorem{mainquestion}{Question}
\newtheorem{theorem}{Theorem}[section]
\newtheorem{proposition}[theorem]{Proposition}
\newtheorem{lemma}[theorem]{Lemma}
\newtheorem{corollary}[theorem]{Corollary}
\theoremstyle{definition} \theoremstyle{remark}
\newtheorem{remark}[theorem]{Remark}
\newtheorem{example}[theorem]{Example}
\newtheorem{definition}[theorem]{Definition}
\newtheorem{problem}{Problem}
\newtheorem{question}{Question}
\newtheorem{conjecture}{Conjecture}
\newcommand{\Sc}{\mathbb{S}}
\newcommand{\Lo}{\mathcal{L}}
\newcommand{\C}{\mathbb{C}}
\newcommand{\N}{\mathbb{N}}
\newcommand{\R}{\mathbb{R}}
\newcommand{\cE}{\mathcal{E}}
\def\ds{\displaystyle}
\begin{document}

\title{Phase transitions for transitive local diffeomorphism with break points on the circle and H\"older continuous potentials}
%{Thermodynamic Formalism for markovian dynamics on the circle and H\"older continuous potentials}

\author{ Thiago Bomfim$^{\ast}$ and Afonso Fernandes}

\address{Departamento de Matem\'atica, Universidade Federal da Bahia\\
Av. Ademar de Barros s/n, 40170-110 Salvador, Brazil.}
\email{$^{\ast}$Corresponding Author: tbnunes@ufba.br}
%\urladdr{https://sites.google.com/site/homepageofthiagobomfim/}
\email{afonso$_{-}$43@hotmail.com}

\date{\today}

\begin{abstract}
It is known that if $f: \mathbb{S}^{1} \rightarrow \mathbb{S}^{1}$ is a transitive $C^{1+\alpha}$-local diffeomorphism non-invertible and non-uniformly expanding, then there is a unique parameter $t_{0} \in (0 , 1]$ such that the topological pressure function $\mathbb{R} \ni t \mapsto P_{top}(f , -t\log|Df|)$ is not analytic, in particular $f$ has a phase transition with respect  to  potential $\phi := -\log|Df|$.  
On the other hand, it is known that for continuous potentials, the topological pressure function can exhibit an infinite number of phase transitions. In this paper, we study the possibilities of the behaviour of the topological pressure function
and transfer operator 
for transitive local diffeomorphism with break points on the circle and H\"older continuous potentials.
In particular, we showed that: (1) there is an open and dense subset of continuous potentials such that if a H\"older continuous potential belongs to this subset, then it has no phase transition
and the transfer operator has the spectral gap property; (2) if a H\"older continuous potential has a phase transition, then the topological pressure function and the associated transfer operator are described.
Consequently, every H\"older continuous potential has at most two phase transitions and the set of smooth potentials such that $\mathcal{L}_{f,\phi}$ has the spectral gap property, acting on the H\"older continuous space, is dense in the uniform topology.  
Furthermore, we obtain applications for  multifractal analysis of the Birkhoff average.
\end{abstract}

\subjclass[2020]{82B26, 37D35, 37C30, 37E10}
\keywords{Thermodynamic formalism, equilibrium states, phase transitions}

\maketitle
%{\tiny
%\tableofcontents
%}

%%%%%%%%%%%%%%%%%%%%%%%%%%%%%%%%
\section{Introduction}

In Physics, the term phase transition is used mainly to describe the different states of matter: solid, liquid, and gas. During a phase transition, we often have a drastic change in properties, such as a discontinuity, resulting from external changes like temperature, pressure, or other phenomena. It corresponds to a qualitative change in the statistical properties of a dynamical system. The precise definition is not common sense; it depends on the settings or properties being studied. In dynamical systems, phase transitions can often be related, for example,  to the non-uniqueness of equilibrium states or the lack of regularity of the topological pressure function. In this work,  phase transition means that the topological pressure function is not analytic, more formally:  
 We say that a continuous dynamical system $f:M\rightarrow M$, acting on a compact metric space $M$, has a \emph{(thermodynamic) phase transition} with respect to a potential $\phi:M\rightarrow \mathbb{R}$ (or $\phi$ has a phase transition) if the topological pressure function $$\R \ni t \mapsto P_{\text{top}}(f,t\phi)$$ is not analytic at some point $t_0 \in \R$, %If additionally, $f$ is analytic for all $t$ except $t_0$, we say that the phase transition is \textbf{effective}, in that case we call this $t_0$ the transition parameter.
 where $P_{top}(f, t\phi)$ denotes the topological pressure of $f$ with respect to $t\phi$. Note that the variational principle for the topological pressure asserts that the topological pressure 
$$
P_{\text{top}}(f, \phi) = \sup\Big\{h_{\mu}(f) + \int\phi \, d\mu : \mu \text{ is an } f \text{-invariant probability measure}\Big\},
$$
where $h_{\mu}(f)$ denotes the Kolmogorov-Sinai metric entropy of $\mu$ (see e.g. \cite{W82}).  
Whenever $\phi \equiv 0$, the topological pressure $P_{\text{top}}(f, 0)$ coincides with the topological entropy $h_{\text{top}}(f)$ of $f$, which is one of the most important topological invariants and measurements of chaotic behaviour in dynamical systems. 
   %With respect to the relation non-uniqueness of equilibrium states and phase transition, by Walters \cite{W92}, if $f$ is expansive and has finite topological entropy then the lack of differentiability of the topological pressure function is related to the non-uniqueness of equilibrium states associated to a potential $\phi$.

Due to the works of Sinai, Ruelle and Bowen \cite{S72, Bow75, BR75}, transitive hyperbolic or expanding dynamics does not admit a phase transition with respect to H\"older continuous potentials. In particular, a dense subset of potentials has no phase transition with respect to uniform topology. From a more general point of view, there are many examples in the literature of non-uniformly hyperbolic dynamics that admit phase transitions with respect to regular potentials:
\begin{itemize}
    \item  Manneville-Pomeau maps and geometric potential \cite{Lo93}, 
    \item   a large class of interval maps with indifferent fixed point and geometric potential \cite{PS92},
    \item certain quadratic maps and geometric potential \cite{CRL13}, %CRL15, CRL19}, 
    %\item certain non-degenerate smooth interval map and geometric potential \cite{CRL21}, 
    \item porcupine horseshoes and geometric potential \cite{DGR14},
    \item  geodesic flow on Riemannian non-compact manifolds with variable pinched negative sectional curvature and suitable H\"older potential \cite{IRV18},
    \item  geodesic flow on a certain M-puncture sphere and geometric potential \cite{V17}.  

\end{itemize}
However, determining which dynamical systems admit a phase transition remains an open question. Recently, by Bomfim and Carneiro \cite{BC21} proposed the following problem: 

\begin{problem}\label{probA}
What is the mechanism responsible for the existence of phase transitions for dynamics that are $C^{1}$-local diffeomorphisms, with $h_{top}(f) > 0$, with respect to H\"older continuous potentials?
\end{problem}

As a first step in this direction, we have an answer from \cite{BC21} for the previous problem when $M=\mathbb{S}^{1}$. They proved an effective thermodynamic phase transition:

\begin{corollary}[\cite{BC21}]
Let $f:\mathbb{S}^{1}\rightarrow\mathbb{S}^{1}$ a transitive non-invertible $C^{1}$-local diffeomorphism with $Df$ Hölder continuous. If $f$ is not an expanding dynamic, then there exists $t_{0} \in (0 , 1]$ such that the topological pressure function $\mathbb{R}\ni t\mapsto P_{top}(f,-t\log|Df|)$ is analytic, strictly decrease and strictly convex in $(-\infty, t_0)$, and constant equal to zero in $[t_0,+\infty)$. In particular, $f$ has a unique thermodynamic phase transition with respect to the geometric potential $-\log|Df|$.  
\end{corollary}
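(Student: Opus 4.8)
The plan is to first determine the global shape of the pressure function with soft arguments (convexity of the pressure plus Ruelle's inequality), and only afterwards to establish the local regularity on $(-\infty,t_0)$, which is where the real work lies. Throughout write $\phi:=-\log|Df|$, $P(t):=\Ptop(f,t\phi)$, and let $\lambda(\mu):=\int\log|Df|\,d\mu$ denote the Lyapunov exponent of an $f$-invariant probability $\mu$. Two facts will be used repeatedly. Since $f$ is a non-invertible local diffeomorphism of $\Sc^1$, it has degree $d\ge 2$, so $P(0)=\htop(f)=\log d>0$. And since $f$ is transitive but not expanding, every $f$-invariant probability satisfies $\lambda(\mu)\ge 0$ while some $f$-invariant probability $\mu_*$ has $\lambda(\mu_*)=0$; this is the circle-dynamics input replacing uniform expansion, and together with Ruelle's inequality in dimension one, $h_\mu(f)\le\lambda(\mu)$ for every invariant $\mu$, it forces $h_{\mu_*}(f)=0$.

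Given these, the coarse picture follows. By the variational principle $P(t)\ge h_{\mu_*}(f)-t\lambda(\mu_*)=0$ for every $t$, so $P\ge 0$ on $\R$. For $t\ge 1$ and any invariant $\mu$, Ruelle's inequality and $\lambda(\mu)\ge 0$ give $h_\mu(f)+t\!\int\!\phi\,d\mu=h_\mu(f)-t\lambda(\mu)\le(1-t)\lambda(\mu)\le 0$, hence $P\equiv 0$ on $[1,\infty)$. Set $t_0:=\inf\{t\in\R:\,P(t)=0\}$; this infimum is attained since $P$ is continuous, and $t_0\in(0,1]$ because $P(0)>0$ and $P\equiv 0$ on $[1,\infty)$, with $P>0$ on $(-\infty,t_0)$ and $P(t_0)=0$. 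Finally, $t\mapsto\Ptop(f,t\phi)$ is convex; if $P(a)\le P(b)$ for some $a<b<t_0$, then monotonicity of the difference quotients of a convex function forces $P(t)\ge P(b)>0$ for all $t\ge b$, contradicting $P(t_0)=0$. Hence $P$ is strictly decreasing on $(-\infty,t_0)$ and, being nonnegative and vanishing at $t_0$, the same convexity argument gives $P\equiv 0$ on $[t_0,\infty)$.

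It remains to prove analyticity and strict convexity on $(-\infty,t_0)$ and the spectral gap of $\cL_{f,t\phi}$ there. The crucial observation is that for $t<t_0$ the potential $t\phi$ is \emph{hyperbolic}: for any invariant $\mu$ with $\lambda(\mu)=\varepsilon$ one has $h_\mu(f)+t\!\int\!\phi\,d\mu\le(1-t)\varepsilon$ by Ruelle's inequality, so no sequence of measures with exponent tending to $0$ can approach $P(t)>0$, and the supremum defining $P(t)$ is realized (and almost realized) only by measures with $\lambda(\mu)\ge\delta(t)>0$. I would then induce $f$ on a subset of $\Sc^1$ where it is uniformly expanding, obtaining a topologically mixing countable Markov map with a locally H\"older induced potential of finite Gurevich pressure; Sarig's thermodynamic formalism for countable Markov shifts then gives, for each $t<t_0$, a unique equilibrium state $\mu_t$, a spectral gap for the induced transfer operator, and real-analytic dependence of the induced pressure on the parameter. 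Transferring back through the Abramov/Kac formula and the implicit function theorem relating induced and original pressure yields real-analyticity of $P$ on $(-\infty,t_0)$, uniqueness of $\mu_t$, the identity $P'(t)=\int\phi\,d\mu_t=-\lambda(\mu_t)<0$ (consistent with the monotonicity above), and a spectral gap for $\cL_{f,t\phi}$ on the H\"older space; alternatively this last step can be done by a direct cone argument for the transfer operator. For strict convexity, $P''(t)$ equals the asymptotic variance $\sigma_t^2$ of $\phi$ for $\mu_t$, and $\sigma_t^2\equiv 0$ on an interval would force $\phi$ to be cohomologous to a constant; but then $t\mapsto P(t)$ would be affine, impossible since $P$ vanishes on $[1,\infty)$ while $P(0)=\log d>0$. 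Hence $\sigma_t^2>0$ and $P$ is strictly convex on $(-\infty,t_0)$. Since an analytic function vanishing on $[t_0,\infty)$ would vanish near $t_0$, $P$ fails to be analytic at $t_0$ and only there, so $t_0$ is the unique phase transition.

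The main obstacle is the inducing construction: choosing the domain of the induced map, verifying the hypotheses of Sarig's formalism (finite irreducibility, the big-images-and-preimages property, finiteness of the Gurevich pressure, local H\"older regularity of the induced potential) and, crucially, carrying the analyticity and the spectral gap back to $f$ with control uniform in $t$ on compact subsets of $(-\infty,t_0)$. The shape statements, the strict monotonicity and the strict convexity are comparatively routine once this is available.
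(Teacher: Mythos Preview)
Your coarse-shape argument is correct and matches what the paper does: convexity of the pressure, Ruelle's inequality, the existence of a zero-exponent invariant measure when $f$ is not expanding, and the definition of $t_0$ all appear in the same form.

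The divergence is in the technical core. You propose to induce to a uniformly expanding subsystem, invoke Sarig's countable-Markov formalism, and transfer analyticity and the spectral gap back via Abramov/Kac and the implicit function theorem. The paper (following \cite{BC21}, where this corollary is actually proved; the present paper develops the same machinery in Section~\ref{Proofs}) instead works directly with $\cL_{f,t\phi}$ on $C^\alpha(\Sc^1,\C)$. The essential-spectral-radius formula $\rho_{ess}(\cL_{f,t\phi})=\exp P_{\topp}(f,\,t\phi-\alpha\log|Df|)$ from \cite{BJL96} reduces quasi-compactness to the pressure inequality $P_{\topp}(f,t\phi)>P_{\topp}(f,\,t\phi-\alpha\log|Df|)$, and this holds precisely when $t\phi$ is \emph{expanding} (equivalently hyperbolic, Corollary~\ref{corhyexp}), i.e.\ when every equilibrium state for $t\phi$ has positive exponent---which is exactly the condition $t<t_0$. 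Strong transitivity then upgrades quasi-compactness to a genuine spectral gap (Lemma~\ref{LemaEss}), after which analyticity, uniqueness of equilibrium states, and strict convexity follow from standard analytic perturbation theory and Nagaev's method (Proposition~\ref{propexp}, Lemma~\ref{Lemapress}).

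Your inducing route can be made to work and is the natural approach for systems where no direct $\rho_{ess}$ estimate is available (interval maps with critical points, for instance). Here, though, $f$ is a local diffeomorphism conjugate to $x\mapsto dx\bmod 1$, and the direct transfer-operator approach sidesteps every obstacle you list: no inducing domain to choose, no BIP or Gurevich-pressure finiteness to verify, no uniform-in-$t$ transfer through the Kac formula. The cost is quoting the nontrivial results of \cite{BJL96,CL97}, but once those are in hand the argument is short and the spectral gap on the original H\"older space comes for free rather than requiring a separate lifting step.
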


%talvez colocar gráfico

Understanding the associated transfer operator was a fundamental step in proving the previous result. This operator is fundamental to studying thermodynamic quantities and obtaining equilibrium states and their properties.
We will now recall this important concept in dynamical systems; for more details, see e.g. \cite{S12} or \cite{PU10}.  We define the \textit{Ruelle-Perron-Frobenius operator} or \textit{transfer operator}, which acts on function spaces, as the following:

Let $f : M \rightarrow M$ be a local homeomorphism on a compact and connected manifold. Given a complex continuous function $\phi: M \rightarrow \C$ , define the Ruelle-Perron-Frobenius operator or transfer operator $\mathcal{L}_{f, \phi}$ acting on functions $g : M \rightarrow \C$ as following:
 $$
 \mathcal{L}_{f,\phi}(g)(x) := \sum_{f(y) = x}e^{\phi(y)}g(y). \footnote{When the context is clear, for notational simplicity, we 
may omit the dependence of the transfer operator on $f$.}
 $$

 Generally, one studies this operator acting on a Banach space $E$ that is dense in $C^0(X,\C)$. Classical thermodynamic results for sufficiently \textit{chaotic} dynamics derive from good \textit{spectral} properties of this operator. The transfer operator for dynamics such as \textit{expanding} maps is shown to have the \textit{spectral gap property} for a large set of potentials (e.g. all Hölder continuous functions), which we recall now:

 Given $E$ a complex Banach space and $\Lo:E \to E$ a bounded linear operator, we say that $\Lo$ has the (strong) \emph{spectral gap property} if there exists a decomposition of its spectrum $\text{sp}(\Lo) \subset \C$ as follows: $\text{sp}(\Lo)=\{\lambda_1\}\cup \Sigma_1$ where $\lambda_1>0$ is a leading eigenvalue for $\Lo$ with one-dimensional associated eigenspace and there exists $0<\lambda_0<\lambda_1$ such that $\Sigma_1 \subset \{z\in\C: |z|<\lambda_0\}.$

  When $f$ is a mixing expanding or hyperbolic dynamical system and $\phi$ is a suitable potential, the thermodynamic properties can be recovered via the transfer operator $\Lo_{f,\phi}$. That is possible by the fact that the transfer operator $\Lo_{\phi}$ has the spectral gap property acting on a suitable Banach space, and it can be shown that $f$ does not have a phase transition with respect to such suitable potentials (see e.g. \cite{PU10}). 
 It is well known that from a dynamical system having the spectral gap property, we can deduce many important statistical properties of thermodynamic quantities (see e.g. \cite{Ba00,GL06,BCV16,BC19}).\\

Again, Bomfim and Carneiro \cite{BC21} formulated the following conjecture:
 \begin{conjecture}\label{conjA}
Let $f : M\rightarrow M$ be a $C^{2}$-local diffeomorphism on a compact Riemannian manifold $M$. Suppose $f$ is not a uniformly expanding map or uniformly hyperbolic diffeomorphism. In that case, there is a suitable potential $\phi$ such that $\mathcal{L}_{f, \phi}$ does not have the spectral gap property acting on a suitable Banach space. (H\"older continuous, smooth functions or distributions space).
\end{conjecture}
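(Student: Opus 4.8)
The plan is to combine two ingredients: a soft ``spectral gap implies analytic pressure'' principle, and the existence of \emph{one} suitable potential exhibiting a thermodynamic phase transition. On the circle the second ingredient is already available --- it is the Corollary of \cite{BC21} recalled in the introduction --- while on a general manifold it is essentially the content of Problem~\ref{probA}.

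First I would fix a complex Banach space $E$ with $C^0\subset E$ dense, on which $\cL_{f,\phi}\colon E\to E$ is bounded for every Hölder (or every $C^\infty$) potential $\phi$ and for which $\Ptop(f,\phi)=\log\rho(\cL_{f,\phi})$, $\rho$ the spectral radius and a leading eigenvalue of $\cL_{f,\phi}$. Suppose $\psi$ is a suitable potential and $\cL_{f,t_0\psi}$ has the spectral gap property on $E$ for some $t_0\in\R$. Because $\cL_{f,t\psi}(g)(x)=\sum_{f(y)=x}e^{t\psi(y)}g(y)$, the family $t\mapsto\cL_{f,t\psi}$ is real-analytic --- in fact entire --- as a map $\R\to L(E)$, with locally uniformly summable derivatives in $t$. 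Kato's analytic perturbation theory then keeps the leading eigenvalue $\lambda_1(t)$ simple, isolated and equal to the spectral radius for $t$ near $t_0$, and makes $t\mapsto\lambda_1(t)$ real-analytic there; hence $t\mapsto\Ptop(f,t\psi)=\log\lambda_1(t)$ is analytic at $t_0$. Read contrapositively: at any parameter $t_0$ where $t\mapsto\Ptop(f,t\psi)$ fails to be analytic, $\cL_{f,t_0\psi}$ cannot have the spectral gap property on $E$. So the conjecture reduces to producing, for $f$ not uniformly expanding/hyperbolic, a suitable $\psi$ whose pressure function is non-analytic somewhere.

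Next I would produce that potential. On $\Sc^1$ one simply takes $\psi=-\log|Df|$: by the Corollary of \cite{BC21} recalled above (its hypotheses hold in the present setting), $t\mapsto\Ptop(f,-t\log|Df|)$ is strictly convex on $(-\infty,t_0)$ and identically zero on $[t_0,+\infty)$ for some $t_0\in(0,1]$, hence not analytic at $t_0$; together with the previous step this already proves the conjecture on the circle, the failing operator being $\cL_{f,-t_0\log|Df|}$. On a general manifold I would instead run the free-energy analysis for $\psi=-\log|\det Df|$: the function $\cF(t):=\Ptop(f,t\psi)$ is convex and non-increasing with $\cF(0)=\htop(f)>0$; one checks $\cF(t)\le 0$ for all large $t$ (a Ruelle-inequality argument, using that the standing hypotheses forbid attracting behaviour) while $\cF(t)\ge h_\nu(f)+t\int\psi\,d\nu\ge 0$ for all $t\ge 0$, where $\nu$ is an $f$-invariant probability with $\int\psi\,d\nu\ge 0$ furnished by the failure of uniform hyperbolicity. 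Hence $\cF$ is eventually constant but non-constant, so it is not real-analytic on $\R$, and the previous step concludes.

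The hard part will be twofold. On the technical side, the identity $\rho(\cL_{f,\phi})=e^{\Ptop(f,\phi)}$ together with the applicability of the analytic perturbation theory is standard machinery on Hölder and $C^k$ spaces, but on the anisotropic distribution spaces attached to a diffeomorphism it has to be re-established: the essential spectral radius is controlled by a Lasota--Yorke/Doeblin--Fortet estimate, whereas the lower bound $\rho\ge e^{\Ptop}$ is obtained by pairing $\cL_{f,\phi}$ with conformal measures built from equilibrium states --- and that last point is exactly where non-uniform hyperbolicity can interfere. On the conceptual side, in full generality the existence of the phase transition itself (Problem~\ref{probA}) is the real obstacle: the geometric potential need not witness it, and one would then have to engineer a potential adapted to the weakest hyperbolicity of $f$, using Pesin theory to locate invariant measures with vanishing or sign-changing Lyapunov spectrum. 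I expect this conceptual point to be the genuine crux, with the Banach-space bookkeeping the main routine-but-heavy hurdle once the space has been chosen.
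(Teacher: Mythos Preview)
The statement you are attempting to prove is labelled \emph{Conjecture}~\ref{conjA} in the paper, and the paper does not prove it: it is recalled from \cite{BC21} as motivation, and the only case settled is $M=\Sc^1$, via the theorem of \cite{BC21} quoted immediately after it. So there is no ``paper's own proof'' to compare your proposal against; your strategy is an outline for an open problem, not an alternative argument.

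That said, the reduction you describe --- spectral gap on a suitable $E$ implies analyticity of $t\mapsto P_{\topp}(f,t\psi)$ via analytic perturbation of simple isolated eigenvalues, hence a thermodynamic phase transition for some $\psi$ yields a parameter at which $\cL_{f,t\psi}$ fails to have spectral gap --- is exactly the mechanism the paper exploits in its own setting (see Corollary~\ref{analy} and Lemma~\ref{GapAnalt}), and your treatment of the circle case by invoking the geometric potential and the \cite{BC21} corollary is correct and matches the paper's discussion.

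The genuine gaps are in your general-manifold sketch, and they go beyond the difficulties you flag. First, $\cF(t)=P_{\topp}(f,-t\log|\det Df|)$ need not be non-increasing: that would require $\log|\det Df|\ge 0$ everywhere, which a $C^2$ local diffeomorphism need not satisfy. Second, the claim $\cF(t)\le 0$ for large $t$ via a Ruelle-inequality argument breaks down whenever some invariant measure has non-positive sum of Lyapunov exponents (e.g.\ any volume-preserving diffeomorphism), since then $h_\mu(f)-t\sum\lambda_i(\mu)$ does not tend to $-\infty$. Third, ``failure of uniform hyperbolicity'' does not by itself produce an invariant measure with $\int\log|\det Df|\,d\nu\le 0$; one can easily have all measures with strictly positive volume growth yet no uniform hyperbolicity. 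Finally, the conjecture does not assume $h_{\topp}(f)>0$, so $\cF(0)>0$ is not given. As you yourself note, producing the phase transition in higher dimension is precisely Problem~\ref{probA} and remains the crux; your sketch does not circumvent it.
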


When $M=\mathbb{S}^{1}$ is the circle, we have an answer from \cite{BC21} for the previous conjecture and, as a consequence, proved the previous corollary. %They proved a spectral phase transition:

\begin{remark}
Given $r\geq 1$ and a integer $\alpha\in (0,1]$ we denote by $C^{r}(\mathbb{S}^{1},\mathbb{C})$ and $C^{\alpha}(\mathbb{S}^{1},\mathbb{C})$ the Banach space of $C^{r}$ functions and $\alpha$-Hölder continuous complex functions whose domain is $\mathbb{S}^{1}$, respectively. Furthermore, we denote by $BV(\Sc^{1})$ the Banach space of bounded variation functions whose domain is $\mathbb{S}^{1}$.
\end{remark}

\begin{theorem}[\cite{BC21}]
Let $E=C^{\alpha}(\mathbb{S}^{1},\mathbb{C})$ or $C^{r}(\mathbb{S}^{1},\mathbb{C})$ and let $f:\mathbb{S}^{1}\rightarrow\mathbb{S}^{1}$ be a transitive not invertible $C^1$-local diffeomorphism with $Df\in E$. If $f$ is not expanding, then there exists $t_0\in (0,1]$ such that the transfer operator $\mathcal{L}_{f,-t\log|Df|}$ has  the spectral gap property for all $t<t_0$ and has no spectral gap property for $t\geq t_0$, acting on $E$.  \end{theorem}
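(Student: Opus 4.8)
Write $\phi_t:=-t\log|Df|$, $\mathcal{L}_t:=\mathcal{L}_{f,\phi_t}$ and $P(t):=P_{top}(f,\phi_t)$, and let $t_0\in(0,1]$ be the transition point supplied by the Corollary above, so that $P$ is analytic and strictly convex with $P>0$ on $(-\infty,t_0)$ and $P\equiv 0$ on $[t_0,+\infty)$. I will freely use that $f$, being a transitive non-invertible local diffeomorphism of $\Sc^{1}$, has degree $d\ge 2$ and is topologically exact, and Ruelle's inequality $h_\mu(f)\le\int\log|Df|\,d\mu$ on $\Sc^{1}$; in particular any invariant measure whose Lyapunov exponent $\lambda(\mu):=\int\log|Df|\,d\mu$ vanishes has zero entropy. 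The plan has two halves: for $t<t_0$ I will produce the leading eigendata of $\mathcal{L}_t$ together with a Lasota--Yorke inequality on $E$ whose contraction rate undercuts $e^{P(t)}$; for $t\ge t_0$ I will rule out spectral gap by exhibiting two distinct equilibrium states.

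\emph{Spectral gap for $t<t_0$.} The inequality $P(t)>0$ says precisely that $\phi_t$ is a \emph{hyperbolic potential}: the supremum of $h_\mu(f)+\int\phi_t\,d\mu$ over invariant $\mu$ with $\lambda(\mu)=0$ equals $0$ (by the remark above) and hence is strictly below $P(t)$. I would exploit this as one does for non-uniformly expanding maps: choose a compact $Y\subset\Sc^{1}$ bounded away from the non-expanding locus of $f$ and induce, obtaining a full-branch uniformly expanding Markov map $F=f^{R}$ on $Y$; the gap $P(t)>0$ forces the return time $R$, once weighted by the induced potential $\Phi_t:=S_R\phi_t$, to have exponentially small tails, so that $\mathcal{L}_{F,\Phi_t}$ is the Ruelle operator of an expanding system with exponentially summable branches and has the spectral gap property on $C^{\alpha}(Y)$ (resp.\ $C^{r}(Y)$, using $Df\in E$). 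Un-inducing through the identity relating $\mathcal{L}_t$ and $\mathcal{L}_{F,\Phi_t}$ along the associated tower then yields, for the normalized operator $\widehat{\mathcal{L}}_t:=e^{-P(t)}\mathcal{L}_t$, an estimate
\[
\|\widehat{\mathcal{L}}_t^{n}g\|_{E}\ \le\ A\,\theta^{n}\,\|g\|_{E}+B\,\|g\|_{\infty},\qquad n\ge 1,
\]
with $\theta<1$: the cost accumulated near the non-expanding locus is sub-exponential, since $\phi_t\approx 0$ there, while one has normalized away a factor $e^{P(t)}>1$. Quasi-compactness (Ionescu-Tulcea--Marinescu), positivity of $\mathcal{L}_t$ and topological exactness then force $e^{P(t)}$ to be a simple eigenvalue with a strictly positive eigenfunction $h_t\in E$, produce a conformal eigenmeasure $\nu_t$ of $\mathcal{L}_t^{*}$ with $\supp\nu_t=\Sc^{1}$, and confine the rest of the spectrum to a disc of radius $<e^{P(t)}$; that is, $\mathcal{L}_t$ has spectral gap on $E$.

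\emph{No spectral gap for $t\ge t_0$.} Suppose that $\mathcal{L}_t$ had spectral gap on $E$. Then it has the leading eigendata $(e^{P(t)},h_t,\nu_t)$ with $P(t)=0$, and $\mu_t:=h_t\,\nu_t$ is the \emph{unique} equilibrium state of $\phi_t$; it is fully supported, since $h_t>0$ and $\supp\nu_t=\Sc^{1}$. On the other hand, because $f$ is not expanding it possesses a proper closed $f$-invariant set $K\subsetneq\Sc^{1}$ (a parabolic periodic orbit, in the model picture) carrying an invariant probability $\mu_*$ with $\lambda(\mu_*)=0$; by Ruelle's inequality $h_{\mu_*}(f)=0$, so $h_{\mu_*}(f)+\int\phi_t\,d\mu_*=0-t\cdot 0=0=P(t)$, i.e.\ $\mu_*$ is also an equilibrium state of $\phi_t$. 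Since $\supp\mu_*\subseteq K\subsetneq\Sc^{1}=\supp\mu_t$, we get $\mu_*\neq\mu_t$, contradicting uniqueness. (For the single value $t=t_0$ one may argue instead that spectral gap would make $s\mapsto P(s)$ real-analytic near $t_0$, incompatible with $P$ being strictly decreasing on $(-\infty,t_0)$ and identically zero on $[t_0,+\infty)$.) Thus $\mathcal{L}_t$ has no spectral gap for any $t\ge t_0$, which together with the previous paragraph completes the proof with this $t_0$.

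\emph{Main obstacle.} The delicate step is the estimate for $t<t_0$: turning ``$\phi_t$ hyperbolic'' into a genuine Lasota--Yorke inequality on the specific spaces $C^{\alpha}(\Sc^{1},\C)$ and $C^{r}(\Sc^{1},\C)$ when $f$ is only $C^{1}$ (respectively, $Df\in C^{r}$) and fails to be uniformly expanding --- concretely, building the inducing scheme on $\Sc^{1}$, controlling its exponential tails by means of $P(t)>0$, and verifying that the spectral gap of the induced operator really descends to $\mathcal{L}_t$ acting on $E$ and not merely on a larger Hofbauer-type Banach space. The direction $t\ge t_0$ is comparatively soft, the only structural input being that the non-expanding locus of a transitive non-expanding $C^{1}$ local diffeomorphism of $\Sc^{1}$ is a proper closed invariant subset carrying a zero-entropy (equivalently zero-exponent) invariant measure.
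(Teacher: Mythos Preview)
Your outline is reasonable but the route for $t<t_0$ differs substantially from the one in \cite{BC21} (whose argument the present paper recapitulates in Proposition~\ref{propexp}), and your route for $t\ge t_0$ leans on an unjustified structural claim.

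For $t<t_0$ the paper avoids inducing altogether. It invokes the essential-spectral-radius formulas of Baladi--Jiang--Lanford \cite{BJL96} (for $C^{\alpha}$) and Campbell--Latushkin \cite{CL97} (for $C^{r}$), which give $\rho_{ess}(\mathcal{L}_{f,\phi}|_{E})\le e^{P_{top}(f,\,\phi-k\log|Df|)}$ with $k=\alpha$ or $k=r$, while $\rho(\mathcal{L}_{f,\phi}|_{E})=e^{P_{top}(f,\phi)}$. If these coincide then $P_{top}(f,\phi)=P_{top}(f,\phi-k\log|Df|)$, and any equilibrium state $\eta$ for $\phi-k\log|Df|$ (which exists by expansiveness and upper semicontinuity of entropy) has $\lambda(\eta)=0$ and is simultaneously an equilibrium state for $\phi$; hence $\phi$ is not expanding. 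Contrapositively, for $t<t_0$ the potential $\phi_t$ is hyperbolic (this is exactly $P(t)>0$), hence expanding, so $\rho_{ess}<\rho$, and strong transitivity upgrades quasi-compactness to the spectral gap (Lemma~\ref{LemaEss}). This bypasses precisely the obstacle you flag: there is no tower, no tail estimate, and no question of descending the gap from a Hofbauer-type space to $E$. Your inducing scheme may well be made to work, but it is a much heavier machine than is needed here.

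For $t\ge t_0$ the paper argues via item (3) of Lemma~\ref{Lemapress}: spectral gap together with $-\log|Df|$ not cohomologous to a constant forces $s\mapsto P(s)$ to be strictly convex near $t$, contradicting $P\equiv 0$ on $[t_0,\infty)$. Your two-equilibrium-states argument is morally the same, but the assertion that the zero-exponent measure $\mu_*$ sits on a \emph{proper} closed invariant set is not free in the bare $C^{1}$ setting: there need not be a parabolic periodic orbit, and you have not shown that a zero-exponent ergodic measure cannot be fully supported (so that $\mu_*\neq\mu_t$ is not yet established). The strict-convexity route avoids this entirely, and you already note the analyticity variant of it for $t=t_0$; using it for all $t\ge t_0$ closes the gap.
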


Note that in \cite{BC21} the phase transitions are obtained for the geometric potential.  Thus, a natural question is what happens to other continuous potentials. Considering only continuous potential, everything might be possible when discussing phase transitions.  In fact, 
 Kucherenko, Quas and Wolf \cite{KQW21} show us that, in the context of shifts, there is always a continuous potential $\phi$ such that the associated topological pressure function has infinite phase transitions, in other words,  $\R \ni t \mapsto P_{top}(f , t\phi)$ is not analytic for infinitely many parameters $t$.

%\begin{theorem}{\cite{KQW20}}
%Let $T:X\rightarrow X$ the two-sided full shift, $\alpha>0$ and $(\beta_n)$ be a strictly increase (finite of infinity) sequence in $[\alpha,+\infty)$. Then there exist a continuous potential $\phi:X\rightarrow\mathbb{R}$ such that the following holds:

%\begin{enumerate}
 %   \item When $\beta\geq \alpha$ the potential $\phi$ has a fase transition at $\beta$ if and only if $\beta=\beta_n$ for some $n\in\mathbb{N}$;
  %  \item If $\lim\limits_{n\rightarrow\infty}\beta_n=\beta_{\infty}<\infty$, then the family of equilibrium states o $\beta\phi$ is constant for all $\beta\geq \beta_{\infty}$. 
%\end{enumerate}
%\end{theorem}

Moreover,  Kucherenko and Quas \cite{KQ22} presented a method to construct a continuous potential whose pressure function
coincides with any prescribed convex Lipschitz function.

%\begin{theorem}{\cite{KQ21}}
%Let $\alpha>0$ and $F(t_1,t_2,...,t_m)$ be a convex Lipschitz
%function on $(\alpha,\infty)^{m}$ such that all the supporting hyperplanes to the graph of $F$ intersect the vertical axis in a closed interval $[b, c] \subset[0,\infty)$. Then there exists a full shift on a finite alphabet and continuous potentials $\phi_1,..., \phi_m$ such that $P_{top}(t_1\phi_1 + ... + t_m\phi_m) = F(t_1,... , t_m)$ for all $(t_1,..., t_m) \in (\alpha,\infty)^m$.
%\end{theorem}

Thus, it is natural to consider a regular potential for a global understanding of topological pressure. Based on the previous discussion, we propose the following question:

\begin{mainquestion}\label{questA}
For a regular potential $\phi$:
Can we describe the topological pressure function $t\rightarrow P_{top}(f,t\phi)$?
Is there a dense subset of potentials such that it has a (or has no) phase transition? 
\end{mainquestion}

On the other hand, as we have already mentioned, a way to prove the non-existence of phase transitions is to obtain the spectral gap property for the associated transfer operator. The following result by Kloeckner \cite{Kl20}, shows that for a Manneville-Pomeau-like map, the set of Hölder continuous potentials such that the associated transfer operator has the spectral gap property is dense in the uniform topology:

\begin{theorem}{(Density of spectral gap )}
Let $T$ be a degree $k$ self-covering of the circle with a neutral fixed point $0$, uniformly expanding outside each neighbourhood of $0$. For any $\alpha\in [0,1)$, let $V$ be the linear space of $\mathcal{C}^{\alpha}$ potentials which are constant near the neutral point. Then for all $\phi\in V$ the transfer operator $\mathcal{L_{T,\phi}}$ acting on $\mathcal{C}^{\alpha}(\mathbb{T})$ has the spectral gap property. Furthermore, for all $\gamma\in (0,\alpha)$, $V$ is dense in $\mathcal{C}^{\alpha}(\mathbb{T})$ for the $\gamma$-Hölder norm.    
\end{theorem}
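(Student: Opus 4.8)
The plan is to establish the spectral gap for each $\phi\in V$ first, and then the (soft) density statement; I will take $\alpha\in(0,1)$, the borderline $\alpha=0$ (where the operator acts on $\mathcal C^{0}$) requiring a separate and easier discussion. Fix $\phi\in V$, so $\phi$ is $\alpha$-H\"older and constant on some neighbourhood $U$ of the neutral fixed point $0$; since replacing $\phi$ by $\phi-\phi(0)$ only multiplies $\mathcal L_{T,\phi}$ by the scalar $e^{-\phi(0)}$ and hence preserves the spectral gap property, we may assume $\phi\equiv 0$ on $U$. The starting point is the classical Lasota--Yorke (Doeblin--Fortet) inequality on $\mathcal C^{\alpha}(\mathbb T)$, available because $|DT|\ge 1$ with $\log|DT|$ $\alpha$-H\"older and vanishing only at $0$ --- this is the estimate underlying \cite{BC21} --- which together with Hennion's theorem and the standard identification $\lim_n\big(\sup_x\mathcal L_{T,\psi}^{n}\mathbf 1(x)\big)^{1/n}=e^{P_{top}(T,\psi)}$ for transitive $T$ shows that, on $\mathcal C^{\alpha}(\mathbb T)$, the spectral radius of $\mathcal L_{T,\phi}$ equals $e^{P_{top}(T,\phi)}$ while its essential spectral radius is at most $e^{P_{top}(T,\phi-\alpha\log|DT|)}$. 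Hence $\mathcal L_{T,\phi}$ is quasi-compact on $\mathcal C^{\alpha}(\mathbb T)$ as soon as
\[
P_{top}(T,\phi-\alpha\log|DT|)\ <\ P_{top}(T,\phi).
\]

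This strict inequality is the heart of the matter, and it follows from the single fact that
\[
P_{top}(T,\phi)\ >\ \phi(0)\qquad\text{for every }\phi\in V. \tag{$\dagger$}
\]
Indeed, testing the variational principle against $\delta_0$ always gives $P_{top}(T,\phi-\alpha\log|DT|)\ge(\phi-\alpha\log|DT|)(0)=\phi(0)$. If this is an equality, then ($\dagger$) immediately yields $P_{top}(T,\phi-\alpha\log|DT|)=\phi(0)<P_{top}(T,\phi)$. If instead the inequality is strict, then $\delta_0$ is not an equilibrium state for $\phi-\alpha\log|DT|$; choosing any equilibrium state $\mu$ for this potential (one exists, the potential being bounded and the metric entropy being upper semicontinuous for these maps), we get $\mu\neq\delta_0$, so $\int\log|DT|\,d\mu>0$ since $\log|DT|>0$ off $\{0\}$, and therefore
\[
P_{top}(T,\phi)\ \ge\ h_{\mu}(T)+\int\phi\,d\mu\ =\ P_{top}(T,\phi-\alpha\log|DT|)+\alpha\!\int\!\log|DT|\,d\mu\ >\ P_{top}(T,\phi-\alpha\log|DT|).
\]

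To prove ($\dagger$) --- and this is where flatness is essential --- assume $\phi(0)=0$, $\phi\equiv 0$ on $U$, and fix $x\in U$. For small $\epsilon>0$, count the preimages $z$ of $x$ under $T^{n}$ whose inverse-branch itinerary uses the branch of $T^{-1}$ fixing $0$ at all but $m=\lfloor\epsilon n\rfloor$ ``excursion'' positions, taken far enough apart that the backward orbit re-enters $U$ before the next excursion. Distinct itineraries give distinct such $z$, there are at least $\binom{\lfloor n/2\rfloor}{m}(K-1)^{m}$ of them, and each such orbit lies in $U$ --- where $\phi\equiv 0$ --- except for a bounded number of steps following each excursion, so $S_{n}\phi(z)\ge -c_{0}m$ for a constant $c_{0}=c_{0}(T,\|\phi\|_{\mathcal C^{0}})$. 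Consequently $\mathcal L_{T,\phi}^{n}\mathbf 1(x)\ge e^{n(\tfrac12 H(2\epsilon)+\epsilon\log(K-1)-c_{0}\epsilon)+o(n)}$ with $H(u)=-u\log u-(1-u)\log(1-u)$; since $\tfrac12 H(2\epsilon)\sim\epsilon\log(1/\epsilon)$ dominates the linear term $c_{0}\epsilon$ as $\epsilon\to 0^{+}$, choosing $\epsilon$ small enough gives $P_{top}(T,\phi)\ge\tfrac12 H(2\epsilon)+\epsilon\log(K-1)-c_{0}\epsilon>0=\phi(0)$, which is ($\dagger$).

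Quasi-compactness is promoted to a genuine spectral gap in the standard way: $\mathcal L_{T,\phi}$ is a positive operator and $T$ is transitive --- indeed topologically exact, being a non-invertible self-covering of the circle of degree $K\ge 2$ --- so the Ruelle--Perron--Frobenius theory for quasi-compact positive transfer operators makes $e^{P_{top}(T,\phi)}$ a simple eigenvalue, with one-dimensional eigenspace spanned by a strictly positive $\mathcal C^{\alpha}$ function and no other spectrum on $\{|z|=e^{P_{top}(T,\phi)}\}$, which together with $\rho_{\mathrm{ess}}<e^{P_{top}(T,\phi)}$ is the spectral gap. For the density statement, fix $\psi\in\mathcal C^{\alpha}(\mathbb T)$ and $\gamma\in(0,\alpha)$, and for small $\eta>0$ take a smooth cutoff $\rho_{\eta}$ equal to $0$ on $[-\eta/2,\eta/2]$, to $1$ off $[-\eta,\eta]$, with $\|\rho_{\eta}'\|_{\mathcal C^{0}}\le C/\eta$; then $\psi_{\eta}:=\psi(0)+(\psi-\psi(0))\rho_{\eta}$ lies in $V$, the difference $u:=\psi-\psi_{\eta}$ is supported in $[-\eta,\eta]$, and a direct computation gives $\|u\|_{\mathcal C^{0}}\le C'|\psi|_{\mathcal C^{\alpha}}\eta^{\alpha}$ and $|u|_{\mathcal C^{\alpha}}\le C''|\psi|_{\mathcal C^{\alpha}}$ uniformly in $\eta$, so the interpolation inequality $|u|_{\mathcal C^{\gamma}}\le 2\|u\|_{\mathcal C^{0}}^{1-\gamma/\alpha}|u|_{\mathcal C^{\alpha}}^{\gamma/\alpha}$ yields $\|\psi-\psi_{\eta}\|_{\mathcal C^{\gamma}}\lesssim|\psi|_{\mathcal C^{\alpha}}\,\eta^{\alpha-\gamma}\to 0$. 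The one genuinely delicate point is ($\dagger$): everything else is routine transfer-operator analysis or soft approximation, but the strict gain of pressure over the value $\phi(0)$ at the neutral point --- equivalently, that the equilibrium state of $\phi-\alpha\log|DT|$ does not degenerate to $\delta_{0}$ --- uses flatness essentially (if $\phi$ only vanished to finite order at $0$, the near-$0$ stretches of the excursion orbits would accumulate a slowly divergent cost that the super-linear entropy $\tfrac12 H(2\epsilon)$ of sparse excursions might fail to beat, and indeed the conclusion can then fail), so carefully setting up the excursion combinatorics --- verifying that the bounded-return structure near $0$ keeps the cost linear in $m$ and that the admissible itineraries are counted correctly --- is the technical core I expect to occupy most of the work.
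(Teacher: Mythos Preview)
This theorem is not proved in the present paper: it is quoted in the introduction as a result of Kloeckner \cite{Kl20}, and the paper's contribution is to extend and sharpen it (Theorems~\ref{maintheogoo}--\ref{maintheorB}), not to reprove it. There is therefore no proof here to compare your attempt against.

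For what it is worth, your approach is sound and is in fact much closer to the methods this paper uses for its own results than to Kloeckner's original argument, which proceeds via optimal transportation (contraction of the dual operator in a Wasserstein-type metric) rather than via essential spectral radius bounds. Your reduction to the strict inequality $P_{top}(T,\phi-\alpha\log|DT|)<P_{top}(T,\phi)$ is exactly the mechanism behind Proposition~\ref{propexp} here (through the references \cite{BJL96,CL97}), and your key inequality ($\dagger$), namely $P_{top}(T,\phi)>\phi(0)$, is precisely the statement that $\delta_0$ is not an equilibrium state for $\phi$ --- in the paper's terminology, that $\phi$ is \emph{expanding} --- after which Proposition~\ref{propexp} and Lemma~\ref{LemaEss} would give the spectral gap directly, without your case split. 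Your excursion combinatorics for ($\dagger$) is correct in outline; the one point that needs to be pinned down carefully is the uniform bound $B$ on the number of iterates of the inverse branch $g_0$ fixing $0$ needed to bring any point back into $U$, which does hold because $g_0$ maps $\mathbb T$ into the injectivity interval containing $0$ and the nested intervals $g_0^k(\mathbb T)$ shrink to $\{0\}$. The density argument via cutoff and H\"older interpolation is standard and correct.
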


Therefore, we propose the following question:

\begin{mainquestion}\label{questB}
Is there a dense set of regular potentials such that the associated transfer operator with it has the spectral gap property, acting on a suitable Banach space?
\end{mainquestion}

In fact, in the enumerable shift case, an analogous question
had already been proposed by Cyr and Sarig \cite{CS09}.

 \smallskip
The main goal of this paper is to answer Questions \ref{questA} and \ref{questB} for transitive local diffeomorphism with break points on the circle\footnote{See definition in Section~\ref{sec:set}} and H\"older continuous potentials. In our context, we prove that it is common that a H\"older continuous potential has no phase transition (see Theorem \ref{maintheogoo}). We also characterized the existence of phase transitions through the associated transfer operator and the convexity of the topological pressure function (see Theorem \ref{maintheA}). Consequently, we extend Kloeckner's result \cite{Kl20} about the density of the spectral gap property. Moreover, we describe the behaviour of the topological pressure function and transfer operator for H\"older continuous potentials (see Theorem \ref{maintheorB}). In particular, every H\"older continuous potential admits at most two phase transitions. Lastly, using the understanding of the topological pressure function and the transfer operator, we obtain an excellent description of the large deviations principle and multifractal analysis for the Birkhoff average of H\"older continuous observable (see Corollaries \ref{maincoroB} and \ref{maincoroC}).

\smallskip
This paper is organized as follows. Section~\ref{Statement of the main results} is devoted to the statement of the main results on phase transitions and applications to the large deviations principle and multifractal analysis.
In Section~\ref{prelim}, we recall the notions and some results on Lyapunov exponents, Ergodic optimization and the Transfer operator.
The proofs of the main results appear in 
Section~\ref{Proofs}. Finally, in Section~\ref{stratpro} we pose some questions. 
%
%

%%%%%%%%%%%%%%%%%%%%%%%%%%%%%%%%%%%%%%%%%%%%%%%%
\section{Definitions and statement of the main results}\label{Statement of the main results}

\subsection{Setting}\label{sec:set}

Throughout the article we will consider transitive $C^{1}$-local diffeomorphism with break points $f : \Sc^{1} \rightarrow \Sc^{1}$, in other words,  $f$ is continuous, transitive and there exist closed arcs $J_{1} , \ldots, J_{k} \subset \Sc^{1},$
with $k > 1$, such that:
\begin{itemize}
    \item $\Sc^{1} = \bigcup_{m = 1}^{k} J_{m}$ and the arcs $J_{m}$ have disjoint interiors;
    \item $f : J_{m} \rightarrow \Sc^{1} $ is a $C^{1}$-local diffeomorphism, $f(J_{m}) = \Sc^{1}$ and $f_{|int(J_{m})}$ is injective, where $int(J_{m})$ denotes the interior of $J_{m}$;
    \item if $f(p) = p$ then the derivative $Df(p)$ is well defined.
\end{itemize}

%Therefore, we can, and are going to, consider $f$ as a continuous dynamics on the circle.

%We will take the Banach space $E = C^{\alpha}(\Sc^{1} , \C)$ or $BV(\Sc^{1})$, for $\alpha \in (0 , 1]$. We will also consider the case $E = C^{r}(\Sc^{1}, \C)$, for $r \geq 1$ integer, in this case we will assume that the dynamics $f : \Sc^{1} \rightarrow \Sc^{1}$ is a transitive $C^{r}-$local diffeomorphism. 

\begin{remark}\label{preperiodic}
Note that, in our context, $Df$ may have discontinuity points. On the other hand, if $x$ is a discontinuity point for $Df$ then $x$ is an endpoint for any intervals $J_{i}$; moreover, all these are mapped on the same fixed point. In particular, since it is assumed that $Df$ is continuous at fixed points, then discontinuity points for $Df$ are pre-periodic and
cannot be periodic.
\end{remark}

We will take the Banach space $E = C^{\alpha}(\Sc^{1} , \C), BV(\Sc^{1}),$ or $C^{r}(\Sc^{1}, \C)$, for $\alpha \in (0 , 1]$ and $r \geq 1$ integer. When $E = C^{r}(\Sc^{1}, \C)$, we will assume that the dynamics $f : \Sc^{1} \rightarrow \Sc^{1}$ is a transitive $C^{r}$-local diffeomorphism.

Follows of \cite{CM86} that in our context $f$ is topologically conjugate to a mixing expanding dynamics $g(x) := deg(f)x \text{ mod } 1$, where $deg(f)$ is the topological degree of $f$. In particular, $f$ is topologically exact, strongly transitive, and has the periodic specification property.

\begin{remark}
We say that an observable $\phi : \Sc^{1} \rightarrow \R$ is cohomologous to a constant if there exists a continuous function $u:\Sc^{1} \rightarrow \R$ and a constant $K \in \R$ such that $ \phi = K + u\circ f - u$. Note that in this case, we have an excellent understanding of the thermodynamic information of $\phi$; in fact, $P_{top}(f , t\phi) = Kt + h_{top}(f)$. Since $f$ has the specification property in our context, applying  \cite[Lemma 1.9]{T10}, then not being cohomologous to a constant is an open and dense property in the uniform topology. 
\end{remark}

\

\subsection{Main results}

Our first result will show us that, typically, the potential
has no phase transition:

\begin{maintheorem}\label{maintheogoo}
There exists an open and dense subset $\mathcal{H} \subset C(\Sc^{1} ,\R)$, in the uniform topology, such that if $\phi \in \mathcal{H}$ is H\"older continuous
%and has bounded variation
then $ \phi $ has no thermodynamic phase transition and 
 $t \mapsto P_{top}(f , t\phi)$ is strictly convex.
\end{maintheorem}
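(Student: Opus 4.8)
The plan is to show that the "bad set" of Hölder potentials with phase transition can be avoided by an open, dense family of continuous potentials, and on that family the pressure is strictly convex. The starting point is the dichotomy from \cite{BC21}: a transitive non-invertible $C^1$-local diffeomorphism of the circle with break points is either expanding or admits phase transition only via the geometric mechanism, i.e. the obstruction to analyticity/strict convexity is governed by how $t\phi$ interacts with the infimum of Birkhoff averages (equivalently, with the minimizing measures of ergodic optimization). So the first step is to recall that, since $f$ is topologically conjugate to the mixing expanding map $g(x)=\deg(f)x \bmod 1$, the pressure function $t\mapsto P_{\topp}(f,t\phi)$ is always convex, and it is analytic and strictly convex on any interval where the associated transfer operator (for a suitably conjugated/normalized potential) has spectral gap — this is exactly the content of the results quoted before the statement. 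The point is therefore to identify which Hölder $\phi$ force spectral gap for all $t\in\R$.

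Next I would set up the open-dense set $\cH$ explicitly in $C(\Sc^1,\R)$. The natural candidate is the set of continuous $\phi$ that are \emph{not cohomologous to a constant} and, more importantly, such that the geometric/critical behaviour at the break points (or at the points where $|Df|$ degenerates — the source of non-expansion) is killed: e.g. $\phi$ that are locally constant near the relevant critical/fixed points, or more robustly $\phi$ whose oscillation there is controlled. By the Remark after the setting, "not cohomologous to constant" is open and dense, so it suffices to intersect with the genuinely relevant open-dense condition. The key technical step is then: for $\phi$ in this set, the transfer operator $\cL_{f,t\phi}$, acting on $C^\alpha(\Sc^1,\C)$, has the spectral gap property for every $t\in\R$. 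I would prove this by transferring to the expanding model $g$ via the conjugacy, using a Lasota–Yorke / Ionescu-Tulcea–Marinescu inequality: the only place the argument can fail is where $|Df|$ is not bounded away from $1$, and the defining condition on $\cH$ ensures $t\phi$ has no unbounded contribution there uniformly in $t$, so one recovers the two-constant inequality and hence quasi-compactness with a leading simple eigenvalue. Strict convexity and analyticity of $t\mapsto P_{\topp}(f,t\phi)$ then follow from standard perturbation theory of the leading eigenvalue (Kato), together with the fact that $\phi$ is not cohomologous to constant, which rules out the degenerate case where the second derivative of pressure vanishes identically.

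The remaining steps are: (i) openness of $\cH$ in the uniform topology — this follows because the spectral gap property is stable under small perturbations of the potential in the relevant norm, and the "non-degeneracy at critical points" condition is an open condition once phrased as a strict inequality on oscillations; (ii) density — given any continuous $\psi$ and $\vep>0$, first approximate $\psi$ uniformly by a smooth (or Hölder) potential, then modify it on small neighbourhoods of the finitely many problematic points to make it locally constant there, changing the uniform norm by at most $\vep$, and finally perturb slightly to leave the "cohomologous to constant" locus, which is possible by the Remark. Combining (i) and (ii) gives that $\cH$ is open and dense, and by construction any Hölder $\phi\in\cH$ has spectral gap for all $t$, hence no phase transition and strictly convex pressure.

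The main obstacle I expect is step (i)–(ii) done \emph{simultaneously with} the uniform (not Hölder) topology: spectral gap is naturally an open condition in a Hölder norm, not in $C^0$, so the delicate point is to package the condition defining $\cH$ so that it is $C^0$-open while still implying spectral gap for the Hölder potentials that land in it. The trick — as in Kloeckner's \cite{Kl20} argument quoted above — is that one does not need $\cH$ itself to consist of potentials with spectral gap; one only needs that \emph{Hölder} members of $\cH$ do, and that the defining constraint (locally constant near the bad points, plus non-cohomologous-to-constant) is genuinely $C^0$-open and $C^0$-dense. Verifying that the Lasota–Yorke constants can be taken uniform in $t\in\R$ under that constraint is the technical heart of the proof.
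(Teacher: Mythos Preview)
Your proposal has a genuine gap at exactly the point you flag as the main obstacle, and the fix you suggest does not work. The condition ``locally constant near the bad points'' is \emph{not} $C^0$-open: if $\phi\equiv c$ on a neighborhood $U$ of a neutral fixed point, then $\phi+\vep\sin(Nx)$ is $C^0$-close to $\phi$ for large $N$ but is nowhere locally constant. Your attempted hedge --- replacing local constancy by a ``strict inequality on oscillations'' --- is never made precise, and it is hard to see how any such local oscillation bound could be $C^0$-open \emph{and} simultaneously strong enough to force spectral gap for $t\phi$ uniformly over all $t\in\R$ (the required control would tighten as $|t|\to\infty$). So as written you have density but not openness, and the theorem asks for both.

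The paper takes a completely different route that sidesteps this difficulty: the set $\cH$ is defined via \emph{ergodic optimization}, not via local behavior of $\phi$. Concretely, let
\[
\cR:=\Big\{\phi\in C(\Sc^1,\R):\ \sup_{\lambda(\mu)=0}\int\phi\,d\mu<\sup_{\lambda(\nu)>0}\int\phi\,d\nu\Big\},
\]
i.e.\ the continuous $\phi$ whose maximizing measures all have positive Lyapunov exponent, and set $\cH:=\cR\cap(-\cR)$. Openness in $C^0$ is immediate from continuity of $(\mu,\phi)\mapsto(\lambda(\mu),\int\phi\,d\mu)$ and compactness of $\cM_1(f)$. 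Density comes from Morris's theorem in ergodic optimization: since $f$ has specification, the ergodic measures with $\lambda>0$ are dense in $\cM_e(f)$, and Morris then gives that the set of $\phi$ with $\cM_{\max}(\phi)\subset\{\lambda>0\}$ is $C^0$-dense. For any H\"older $\phi\in\cH$ and any $t\in\R$, the potential $t\phi$ is then \emph{expanding} (no equilibrium state with zero Lyapunov exponent), which --- via estimates on the essential spectral radius of $\cL_{f,t\phi}$ in terms of $P_{\topp}(f,t\phi-k\log|Df|)$ --- forces spectral gap on $E$ for every $t$; analyticity and strict convexity follow as you indicate. The key idea you are missing is this ergodic-optimization characterization of $\cH$, which gives $C^0$-openness for free and avoids any attempt to control $\phi$ locally near the neutral points.
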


Follows directly from the previous theorem:

\begin{maincorollary}\label{corF}
\
\begin{enumerate}
    \item $\{ \phi : \Sc^{1} \rightarrow \R \text{ smooth } ;\; \phi \text{ has no thermodynamic phase transition and } t \mapsto P_{top}(f , t\phi) \text{ is strictly convex } \}$ is dense in the uniform topology;\\
    \item $\{\phi  : \Sc^{1} \rightarrow \R \text{ H\"older continuous 
    %and with bounded variation
    } ;\;
    \phi \text{ has a thermodynamic phase transition }\}$ is not dense in the uniform topology.
\end{enumerate}
\end{maincorollary}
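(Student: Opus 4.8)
The plan is to derive both items as soft consequences of Theorem~\ref{maintheogoo}; the only inputs are the open and dense subset $\mathcal{H}\subset C(\Sc^{1},\R)$ it provides, together with two elementary facts: the intersection of a dense set with an open dense set is dense, and $C^{\infty}(\Sc^{1},\R)$ is dense in $(C(\Sc^{1},\R),\|\cdot\|_{\infty})$ (for example via mollification, or via trigonometric polynomials and the Stone--Weierstrass theorem).

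For item (1), I would argue that $\mathcal{H}\cap C^{\infty}(\Sc^{1},\R)$ is dense in the uniform topology: given a nonempty open $U\subset C(\Sc^{1},\R)$, the set $U\cap\mathcal{H}$ is nonempty and open because $\mathcal{H}$ is open and dense, hence it meets the dense set $C^{\infty}(\Sc^{1},\R)$. Every smooth potential is in particular H\"older continuous, so Theorem~\ref{maintheogoo} applies to each $\phi\in\mathcal{H}\cap C^{\infty}(\Sc^{1},\R)$, giving that $\phi$ has no thermodynamic phase transition and that $t\mapsto P_{top}(f,t\phi)$ is strictly convex. The same reasoning works verbatim with $C^{\infty}$ replaced by $C^{r}$ for any integer $r\geq 1$, since such potentials are H\"older as well.

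For item (2), set $S:=\{\phi\in C^{0}(\Sc^{1},\R)\ \text{H\"older}\ :\ \phi\ \text{has a thermodynamic phase transition}\}$. Since $\mathcal{H}$ is open and nonempty, I would pick a nonempty open set $U\subset\mathcal{H}$, for instance a small uniform ball around one of the smooth potentials found in item (1). By Theorem~\ref{maintheogoo} no H\"older continuous element of $\mathcal{H}$ has a phase transition, so $S\cap U=\emptyset$; as $U$ is open, any point of $\overline{S}$ lying in $U$ would have $U$ as a neighbourhood meeting $S$, a contradiction. Hence $\overline{S}\cap U=\emptyset$, so $\overline{S}\neq C(\Sc^{1},\R)$ and $S$ is not dense.

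The difficulty has all been front-loaded into Theorem~\ref{maintheogoo}, whose proof is where the dynamics and thermodynamic formalism really enter; once $\mathcal{H}$ is in hand, Corollary~\ref{corF} is pure point-set topology and I do not anticipate any genuine obstacle, the only points requiring a little care being the ``dense $\cap$ open-dense $=$ dense'' step and the standard density of smooth functions in $C^{0}$.
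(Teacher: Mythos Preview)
Your proposal is correct and is precisely the argument the paper has in mind: in the text the corollary is simply stated as ``Follows directly of the previous theorem'' with no further proof, and what you wrote is exactly the elementary point-set topology needed to unpack that claim from the open dense set $\mathcal{H}$ of Theorem~\ref{maintheogoo}.
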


We also obtain characterizations for thermodynamic phase transition in the case of H\"older continuous potentials. %Before the result, we need to talk briefly about the large deviation rate and multifractal analysis.

\begin{maintheorem}\label{maintheA}
Given $\phi \in E$ such that $\phi : \Sc^{1} \rightarrow \R$ is a H\"older continuous potential,
%and has bounded variation
 then they are equivalent:

\begin{enumerate}
    \item $\phi$ has no thermodynamic phase transition;\\
    \item $\phi$ has no spectral phase transition, i.e., $\mathcal{L}_{f, t\phi}$ has the spectral gap property acting on $E$, for all $t \in \R$;\\
    %\item the large deviations rate function $\Delta \ni (a , b) \mapsto LD_{\phi, a,b}$ is analytic;\\
    %\item the multifractal analysis function $\Delta \ni (a , b) \mapsto h_{X_{\phi, a,b}}(f)$ is analytic.\\
\end{enumerate}
 Furthermore, if $\phi$ is not cohomologous to a constant, then all the previous items are equivalent to the
 \begin{itemize}
     \item[(3)] topological pressure function $\R \ni t \mapsto P_{top}(f , t\phi)$ is strictly convex.\\
 \end{itemize}
 Moreover, all the previous items imply that $t\phi $ has a unique equilibrium state for all $t \in \R$.
\end{maintheorem}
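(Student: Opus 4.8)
The plan is to prove $(2)\Rightarrow(1)$ by analytic perturbation theory, to obtain $(1)\Rightarrow(2)$ via its contrapositive $\neg(2)\Rightarrow\neg(1)$, and --- when $\phi$ is not cohomologous to a constant --- to insert $(3)$ by proving $(1)\Rightarrow(3)$ directly while noting that the same contrapositive argument also gives $\neg(2)\Rightarrow\neg(3)$; uniqueness of the equilibrium state is then a byproduct of $(2)$. For $(2)\Rightarrow(1)$: since $\mathcal{L}_{f,(t+s)\phi}=\mathcal{L}_{f,t\phi}\circ M_{e^{s\phi}}$ and $E$ is a Banach algebra containing $\phi$, the map $t\mapsto\mathcal{L}_{f,t\phi}$ is real-analytic into the bounded operators on $E$; hence, wherever $\mathcal{L}_{f,t_0\phi}$ has a spectral gap, Kato's analytic perturbation theory gives a simple leading eigenvalue $\lambda(t)$ that is analytic near $t_0$, and since $\lambda(t)=e^{P_{top}(f,t\phi)}$ (Section~\ref{prelim}) the pressure is analytic at $t_0$. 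As this holds for all $t_0$, $(1)$ follows.

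For $(1)\Rightarrow(3)$, assume $\phi$ not cohomologous to a constant and argue by contradiction. The function $P(t):=P_{top}(f,t\phi)$ is convex (a supremum of affine functions of $t$ by~\eqref{eq:VP}) and, by $(1)$, real-analytic. If it were not strictly convex, $P''$ would vanish on a nonempty open interval, hence identically (analyticity), so $P(t)=ct+h_{top}(f)$ for some $c\in\R$. Plugging an arbitrary invariant $\mu$ into~\eqref{eq:VP} gives $ct+h_{top}(f)\ge h_\mu+t\int\phi\,d\mu\ge t\int\phi\,d\mu$ for all $t$, forcing $\int\phi\,d\mu=c$ for every invariant $\mu$; since $f$ is expansive with the specification property (it is conjugate to $x\mapsto\deg(f)x$), a Liv\v{s}ic-type argument then makes $\phi$ cohomologous to $c$, a contradiction. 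Thus $P$ is strictly convex.

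The crux is $\neg(2)\Rightarrow\neg(1)$ (and, in the non-cohomologous case, $\neg(2)\Rightarrow\neg(3)$). Suppose $\mathcal{L}_{f,t_0\phi}$ has no spectral gap on $E$. By the quasi-compactness criterion of Section~\ref{prelim} --- extending the one used in~\cite{BC21} for the geometric potential and valid, uniformly, for every $E$ in the setting: $\mathcal{L}_{f,\psi}$ has a spectral gap on $E$ iff the supremum~\eqref{eq:VP} defining $P_{top}(f,\psi)$ is \emph{not} approached along invariant measures with $\lambda(\mu)=\int\log|Df|\,d\mu\to0$ --- there are invariant $\mu_n$ with $\lambda(\mu_n)\to0$ and $h_{\mu_n}+t_0\int\phi\,d\mu_n\to P(t_0\phi)$. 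Since $0\le h_\mu\le\lambda(\mu)$ for all invariant $\mu$ (Section~\ref{prelim}), $h_{\mu_n}\to0$, so $t_0\int\phi\,d\mu_n\to P(t_0\phi)$; as $P(0)=h_{top}(f)>0$ we get $t_0\neq0$, and we may assume $t_0>0$ ($t_0<0$ is symmetric, with $\min$ in place of $\max$). With $\beta_+:=\max_\mu\int\phi\,d\mu$ and $h_+:=\max\{h_\mu:\int\phi\,d\mu=\beta_+\}$ (attained, by weak-$*$ compactness and upper semicontinuity of entropy), $\int\phi\,d\mu_n\le\beta_+$ gives $P(t_0\phi)\le t_0\beta_+$, while~\eqref{eq:VP} applied to a maximizing measure of entropy $h_+$ gives $P(t\phi)\ge h_++t\beta_+$; at $t=t_0$ these force $h_+=0$ and $P(t_0\phi)=t_0\beta_+$. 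Now $t\mapsto P(t\phi)-t\beta_+$ is nonincreasing, $\ge h_+=0$, and tends to $h_+=0$ as $t\to+\infty$, so it vanishes on $[t_0,+\infty)$; thus $P$ is affine there, and in particular not strictly convex ($\neg(3)$). It cannot be affine on all of $\R$ (else $h_{top}(f)=P(0)=0$), and since $\phi$ is not cohomologous to a constant, $\min_\mu\int\phi\,d\mu<\beta_+$ forces $P(t\phi)>t\beta_+$ for $t<0$; hence $P$ equals the affine map $t\mapsto t\beta_+$ exactly on a half-line $[t^*,+\infty)$ with $t^*\in(0,t_0]$, so by the identity theorem for real-analytic functions $P$ fails to be analytic at $t^*$ --- a phase transition, i.e. $\neg(1)$. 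Finally, if $\phi$ \emph{is} cohomologous to a constant the criterion cannot fail (it would require $h_{\mu_n}\to h_{top}(f)>0$ against $h_{\mu_n}\le\lambda(\mu_n)\to0$), so $\neg(2)$ is vacuous; hence $(1)\Leftrightarrow(2)$ for every H\"older continuous $\phi$, and with the two preceding paragraphs $(3)$ joins in when $\phi$ is not cohomologous to a constant.

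When $(2)$ holds, the spectral gap of $\mathcal{L}_{f,t\phi}$ yields, for each $t$, leading eigendata producing a Gibbs measure that is the unique equilibrium state of $t\phi$, as usual. The main obstacle is $\neg(2)\Rightarrow\neg(1)$, and inside it two points. First, one needs the precise quasi-compactness criterion for $\mathcal{L}_{f,t\phi}$ on the spaces $E$ in terms of a pressure-versus-Lyapunov gap; this is the technical heart, extending the transfer-operator analysis of~\cite{BC21} from the geometric potential to arbitrary H\"older continuous $\phi\in E$. Second, the break points require care: $\log|Df|$ is only piecewise continuous, so $\mu\mapsto\lambda(\mu)$ is not weak-$*$ continuous, and the extraction of the $\mu_n$ together with the identity $h_+=0$ must be handled using that $\log|Df|$ is bounded with one-sided limits and that $h_\mu\le\lambda(\mu)$. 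The monotonicity of $t\mapsto P(t\phi)-t\beta_+$ is what turns an isolated failure of the spectral gap into loss of analyticity on an entire half-line, which is also why every H\"older continuous potential can have at most two phase transitions.
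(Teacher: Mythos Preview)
Your proof is correct and shares the paper's architecture: both hinge on the same two ingredients, namely (a) a spectral criterion saying that $\mathcal{L}_{f,\psi}$ has a spectral gap on $E$ exactly when $\psi$ is expanding/hyperbolic (the paper's Proposition~\ref{propexp}, which you correctly single out as the technical heart), and (b) the observation that a non-hyperbolic potential forces the pressure to be affine on a half-line (the paper's Proposition~\ref{propha}, which you rederive). You organize $(1)\Leftrightarrow(2)$ via the contrapositive $\neg(2)\Rightarrow\neg(1)$ while the paper argues $(1)\Rightarrow(2)$ directly, but this is the same logic read backwards. Two remarks on points you flag as delicate: your worry that $\mu\mapsto\lambda(\mu)$ might fail to be weak-$*$ continuous is unfounded here, since the break points are pre-periodic and hence null for every invariant measure (the paper's Lemma~\ref{expconti}); and your use of $h_\mu\le\lambda(\mu)$ silently needs $\lambda(\mu)\ge 0$, which the paper establishes separately from transitivity and specification.

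The one place you genuinely diverge is $(1)\Rightarrow(3)$. The paper obtains local strict convexity from the spectral gap via Nagaev's method: $P''(t)$ equals the CLT variance $\sigma^2$, and $\sigma^2=0$ forces a coboundary equation $\mu_\phi$-a.e., hence everywhere by full support. Your argument is more elementary and global: analyticity plus failure of strict convexity forces $P$ to be affine on all of $\R$, which pins $\int\phi\,d\mu$ to a constant for every invariant $\mu$, and specification then makes $\phi$ cohomologous to that constant. Your route avoids the CLT machinery entirely and gives strict convexity on all of $\R$ in one stroke; the paper's route, on the other hand, yields strict convexity locally from the spectral gap alone, which is what is needed later in the proof of Theorem~\ref{maintheorB} on the open interval $(t_1,t_2)$ where no global analyticity is available.
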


 It is important to point out that, by Leplaideur \cite{L15}, there is an example of a continuous potential defined on a mixing subshift of finite type such that the pressure function is analytic, but the uniqueness of the equilibrium state fails.   
 
It follows from the previous results that
 $$\{\phi \in E : t\phi \text{ has a unique equilibrium state for all } t \in \R \}$$ is dense, in the uniform topology. Furthermore, $$\{\phi \in E : \mathcal{L}_{f, t\phi|E} \text{ has the spectral gap property for all } t \in \R\}$$ is dense in the uniform topology. It extends an analogous result obtained in \cite{Kl20}, in your case the dynamic is Manneville-Pomeau-like and the Banach space $E$ is $C^{\alpha}(\Sc^{1} , \C)$.

The following result shows us that, for H\"older continuous potentials, we have a good thermodynamic understanding even if a phase transition occurs. It will be a counterpoint to the results of \cite{KQW21,KQ22}.

\begin{maintheorem}\label{maintheorB}
Let $\phi \in E$ be such that $\phi : \Sc^{1} \rightarrow \R$ is a H\"older continuous potential.
%and has bounded variation.
If $\phi$ has a thermodynamic phase transition then there exist $ -\infty \leq t_{1} < t_{2} \leq +\infty $, with at least one of them belonging to $\R$, such that:
\begin{enumerate}
    \item the topological pressure function $\R \ni t \mapsto P_{top}(f , t\phi)$ is analytic and strictly convex  in $(t_{1} , t_{2})$, and linear in $(-\infty , t_{1}] \cup [t_{2} , +\infty]$;\\
    \item $\mathcal{L}_{f, t\phi}$ has the spectral gap property acting on $E$, for all $t \in (t_{1} , t_{2})$, and $\mathcal{L}_{f, t\phi}$ has no spectral gap property acting on $E$, for all $t \in (-\infty , t_{1}] \cup [t_{2} , +\infty]$.
   % \item the large deviations rate function $\Delta \ni (a , b) \mapsto LD_{\phi, a,b}$ is convex and we can decompose $\Delta$ in regions $\Delta_{1}, \Delta_{2}, \Delta_{3}$ such that the large deviations rate function is affine in $\Delta_{1}$, constant and equal the $0$ in $\Delta_{2}$,  analytic and strictly convex in $\Delta_{3}$;\\
    %\item the large deviations rate function $\Delta \ni (a , b) \mapsto h_{X_{\phi, a,b}}(f)$ is concave, affine in $\Delta_{1}$, constant and equal the $h_{top}(f)$ in $\Delta_{2}$,  analytic and strictly concave in $\Delta_{3}$.\\
\end{enumerate}
\end{maintheorem}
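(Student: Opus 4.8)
The strategy is to extract all the desired structure from the interplay between the pressure function $p(t):=P_{top}(f,t\phi)$, which is automatically convex and $\|\phi\|_{\infty}$-Lipschitz, and the set
$$G:=\{t\in\R:\ \mathcal{L}_{f,t\phi}\ \text{has the spectral gap property acting on}\ E\}.$$
Since $\phi$ has a phase transition, $\phi$ is not cohomologous to a constant (otherwise $p$ would be affine, hence analytic), and by Theorem~\ref{maintheA} one has $G\neq\R$. The ``soft'' part of the argument consists of three observations. (i) $G$ is open: $t\mapsto\mathcal{L}_{f,t\phi}$ equals $\mathcal{L}_{f,0}$ composed with multiplication by $e^{t\phi}$, a real-analytic family of bounded operators on the Banach algebra $E$ (the series $\sum_n t^n\phi^n/n!$ converges in $E$), and the spectral gap property is stable under small perturbations in operator norm. (ii) On $G$ the leading eigenvalue $\lambda(t)$ of $\mathcal{L}_{f,t\phi}$ is real-analytic in $t$ by Kato perturbation theory and $p(t)=\log\lambda(t)$, so $p$ is real-analytic on $G$. (iii) On $G$ one has $p''(t)=\sigma^2_{\mu_t}(\phi)$, the variance of $\phi$ in the central limit theorem for the unique (fully supported) equilibrium state $\mu_t$ of $t\phi$, and this is strictly positive because $\phi$ is not cohomologous to a constant; hence $p$ is analytic and strictly convex on $G$.

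Next I would pin down the global shape of $p$. Write $\beta_-:=\min_\mu\int\phi\,d\mu<\max_\mu\int\phi\,d\mu=:\beta_+$ (the inequality is strict, else $p$ is affine and $\phi$ has no phase transition), let $H_\pm:=\sup\{h_\mu(f):\int\phi\,d\mu=\beta_\pm\}$, and set $\ell_\pm(t):=t\beta_\pm+H_\pm$. The variational principle gives $p\ge\max(\ell_-,\ell_+)$ everywhere, and the elementary estimate $h_\nu(f)+t\int\phi\,d\nu\le p(s)+(t-s)\beta_+$ (valid for every invariant $\nu$ and $t\ge s$) yields the propagation property: if some $\phi$-maximizing measure $\mu$ is an equilibrium state for $s\phi$, then $p(t)=t\beta_++h_\mu(f)$ for all $t\ge s$, and symmetrically for $\phi$-minimizing equilibrium measures. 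Now the sets $Z_\pm:=\{t:p(t)=\ell_\pm(t)\}$ are closed intervals by convexity of $p$; since $p'$ is nondecreasing with $p'(t)\to\beta_\pm$ as $t\to\pm\infty$ and $p'\equiv\beta_\pm$ on the interior of $Z_\pm$, the interval $Z_-$ is either empty or a half-line $(-\infty,t_1]$, and $Z_+$ is either empty or a half-line $[t_2,+\infty)$, with $t_1\le t_2$; moreover $p$ coincides with $\ell_-$ on $(-\infty,t_1]$ and with $\ell_+$ on $[t_2,+\infty)$, hence is linear there. It remains to see that $t_1<t_2$ and to identify $(t_1,t_2)$ with $G$. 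For the strict inequality, $t_1=t_2$ would force $p=\max(\ell_-,\ell_+)$, equivalently the entropy spectrum $\beta\mapsto\sup\{h_\mu(f):\int\phi\,d\mu=\beta\}$ to be affine on $[\beta_-,\beta_+]$; I would rule this out using $h_{top}(f)=\log\deg(f)>0$ together with the specification and entropy-density properties of $f$ and the hyperbolicity of its (unique) measure of maximal entropy, which make the entropy spectrum strictly concave near its maximum. Once the identification below is proved, the hypothesis that $\phi$ has a phase transition forces at least one of $t_1,t_2$ to be finite, since $t_1=-\infty$ and $t_2=+\infty$ would give $G=\R$.

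The identification $G=(t_1,t_2)$ is the crux. The inclusion $G\subseteq(t_1,t_2)$ is immediate: $p$ is strictly convex on $G$ by the soft part, while it is linear on $(-\infty,t_1]\cup[t_2,+\infty)$ by the previous paragraph, so $G$ is disjoint from those half-lines. The reverse inclusion $(t_1,t_2)\subseteq G$ is the main obstacle, and this is where I would invest the essential work, reusing the machinery behind Theorem~\ref{maintheA} and \cite{BC21}: for $t\in(t_1,t_2)$ the equilibrium state $\mu_t$ satisfies $h_{\mu_t}(f)+t\int\phi\,d\mu_t=p(t)>\max(\ell_-(t),\ell_+(t))$, which forces $\mu_t$ to be neither $\phi$-maximizing nor $\phi$-minimizing and, in our setting, to stay quantitatively away from the non-expanding part of $f$ responsible for the loss of spectral gap; this ``uniform hyperbolicity along $\mu_t$'' yields the Lasota--Yorke inequality for $\mathcal{L}_{f,t\phi}$ on $E$ (quasicompactness plus simplicity of the leading eigenvalue), that is, the spectral gap. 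In particular, this rules out any affine piece of $p$ of intermediate slope inside $(t_1,t_2)$ — precisely the possibility one must not leave to chance.

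Granting the above, item~(1) holds with $t_1,t_2$ as defined, and item~(2) follows formally: spectral gap on $(t_1,t_2)$ is the inclusion $(t_1,t_2)\subseteq G$; and if $\mathcal{L}_{f,t\phi}$ had spectral gap at some $t\in(-\infty,t_1]\cup[t_2,+\infty)$, then by openness of $G$ it would have spectral gap on a whole neighbourhood of $t$, hence $p$ would be analytic and strictly convex there by the soft part, contradicting the linearity of $p$ on $(-\infty,t_1]\cup[t_2,+\infty)$, because such a neighbourhood meets that set in a nondegenerate interval. Thus the two genuinely non-soft inputs are: (a) the inclusion $(t_1,t_2)\subseteq G$ — i.e.\ that loss of spectral gap for Hölder potentials is caused precisely by equilibrium measures concentrating on the non-expanding set, together with the quantitative Lasota--Yorke estimates making ``$\mu_t$ bounded away from it'' imply a genuine gap — and (b) the strict concavity of the entropy spectrum near its maximum that gives $t_1<t_2$; both are expected to follow from the analysis already developed for Theorems~\ref{maintheogoo} and~\ref{maintheA} and from \cite{BC21}.
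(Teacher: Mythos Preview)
Your overall strategy is sound and reaches the right conclusion, but the paper organizes the proof around a single intermediate notion that you never isolate: a potential $\psi$ is called \emph{hyperbolic} when $\sup_{\mu}\int\psi\,d\mu<P_{top}(f,\psi)$, equivalently when every equilibrium state of $\psi$ has positive metric entropy. The paper simply sets $t_{1}:=\sup\{t<0:t\phi\text{ not hyperbolic}\}$ and $t_{2}:=\inf\{t>0:t\phi\text{ not hyperbolic}\}$; Proposition~\ref{propha} then gives linearity of $p$ on $(-\infty,t_{1}]\cup[t_{2},+\infty)$ directly (with intercept $0$, not your $H_{\pm}$), while the chain hyperbolic $\Rightarrow$ expanding $\Rightarrow$ spectral gap (Corollary~\ref{corhyexp} plus Proposition~\ref{propexp}, the latter via the essential-spectral-radius bounds of \cite{BJL96,CL97,BK90}) delivers analyticity and strict convexity on $(t_{1},t_{2})$. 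Your $Z_{\pm}$-based parameters coincide with the paper's once one checks that $H_{\pm}=0$ whenever $Z_{\pm}\neq\emptyset$ (otherwise $p$ would be linear on a half-line where $t\phi$ is still hyperbolic, hence strictly convex there by Lemma~\ref{Lemapress}, a contradiction); this observation already gives $t_{1}<0<t_{2}$ from $p(0)=h_{top}(f)>0=H_{\pm}$, so your detour through strict concavity of the entropy spectrum is unnecessary.

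There is one mis-step in your justification of $(t_{1},t_{2})\subseteq G$. You argue that $p(t)>\max(\ell_{-}(t),\ell_{+}(t))$ forces the equilibrium state $\mu_{t}$ to be neither $\phi$-maximizing nor $\phi$-minimizing, and that \emph{this} keeps $\mu_{t}$ away from the non-expanding set. But ``not $\phi$-extremal'' is not the operative property: there can certainly be invariant measures with zero Lyapunov exponent that are neither maximizing nor minimizing for $\phi$. The correct deduction is that $p(t)>t\beta_{+}$ (for $t>0$; symmetrically for $t<0$) forces $h_{\mu_{t}}(f)=p(t)-t\int\phi\,d\mu_{t}\geq p(t)-t\beta_{+}>0$, and then the Margullis--Ruelle inequality yields $\lambda(\mu_{t})>0$; this is precisely the ``expanding potential'' hypothesis of Proposition~\ref{propexp}. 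With this substitution your argument goes through and no fresh Lasota--Yorke scheme is needed: the spectral-radius comparison already available in the paper's toolbox delivers the gap.
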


%It is important to note that the regions obtained in the previous theorem can be calculated.

It follows from the previous result that every  H\"older continuous potential admits at most two thermodynamic phase transitions.

\begin{definition}
We say that a transitive $C^{1}$-local diffeomorphism with break points $f$ is Manneville-Pomeau-like if $f$ is expanding except in the unique fixed point, more formally,  $f(0) = 0$, $Df(0) = 1$ and $|Df(x)| > 1$ for all $x\neq 0$ or $1$, when the derivative is well defined.
\end{definition}

\begin{maincorollary}\label{maincoroA}
If $f$ is Manneville-Pomeau-like then every H\"older continuous potential $\phi : \Sc^{1} \rightarrow \R$
%, with bounded variation, 
admits at most one thermodynamic phase transition.
\end{maincorollary}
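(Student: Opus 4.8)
The plan is to derive Corollary~\ref{maincoroA} from Theorem~\ref{maintheorB} together with one structural fact about Maneville-Poumeau-like maps. Assume $\phi\in E$ is Hölder continuous and has a thermodynamic phase transition. By Theorem~\ref{maintheorB} there exist $-\infty\le t_1<t_2\le+\infty$, at least one of them in $\R$, such that $t\mapsto P_{top}(f,t\phi)$ is analytic and strictly convex on $(t_1,t_2)$ and affine on each of $(-\infty,t_1]$ and $[t_2,+\infty)$. The points at which the pressure is not analytic then lie in $\{t_1,t_2\}\cap\R$, so it suffices to show that at most one of $t_1,t_2$ is finite; equivalently, to rule out $t_1,t_2\in\R$.

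Suppose, for a contradiction, that $-\infty<t_1<t_2<+\infty$. Set $\ell(t):=t\,\phi(0)$. Applying the variational principle to the zero-entropy invariant measure $\delta_0$ gives $P_{top}(f,t\phi)\ge h_{\delta_0}(f)+t\!\int\phi\,d\delta_0=\ell(t)$ for every $t\in\R$. The crux of the argument is the claim that, because $f$ is Maneville-Poumeau-like, the affine piece of the pressure on $(-\infty,t_1]$ is exactly $\ell$, and so is the affine piece on $[t_2,+\infty)$. Granting this, $P_{top}(f,t_1\phi)=\ell(t_1)$ and $P_{top}(f,t_2\phi)=\ell(t_2)$, and since $t\mapsto P_{top}(f,t\phi)$ is convex its graph over $[t_1,t_2]$ lies below the chord joining these two points, which is the restriction of $\ell$; combined with the lower bound $P_{top}(f,t\phi)\ge\ell(t)$ this forces $P_{top}(f,t\phi)=\ell(t)$ on $[t_1,t_2]$, hence on all of $\R$. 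But then the pressure is affine, hence analytic, contradicting the existence of a phase transition. Therefore $t_1=-\infty$ or $t_2=+\infty$, i.e., $\phi$ has at most one phase transition.

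It remains to justify the claim, and this is where I expect the main difficulty to lie. I would appeal to the description, obtained while proving Theorems~\ref{maintheA} and~\ref{maintheorB}, of precisely when $\mathcal{L}_{f,t\phi}$ loses the spectral gap on $E$ and of the form of the associated conformal and equilibrium measures. For a Maneville-Poumeau-like map the fixed point $0$ is the only point with $|Df|\le 1$, hence the unique source of non-uniform expansion, and $\delta_0$ is the only invariant measure not accounted for by the uniformly expanding part of $f$; moreover $\delta_0$ has zero entropy and $\int\phi\,d\delta_0=\phi(0)$. On a half-line where the pressure is affine the equilibrium states degenerate to $\delta_0$ (the leading eigenmeasure concentrating at $0$), so on that half-line $P_{top}(f,t\phi)=h_{\delta_0}(f)+t\,\phi(0)=\ell(t)$, which is the claim.

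The obstacle is exactly this rigidity statement. Soft convexity and slope computations only show that the affine end on $(-\infty,t_1]$ has slope $\inf_\mu\int\phi\,d\mu$ and the affine end on $[t_2,+\infty)$ has slope $\sup_\mu\int\phi\,d\mu$, and \emph{a priori} these extreme ergodic averages need not equal $\phi(0)$, nor need the affine ends pass through the origin; a minimizing or maximizing measure of positive entropy could in principle produce an affine end not equal to $\ell$. Ruling this out uses that a Maneville-Poumeau-like map has a single indifferent point, contributing only the one-parameter family $\{\delta_0\}$ and hence the single affine function $\ell$, and this is the part that genuinely requires the transfer-operator analysis behind Theorems~\ref{maintheA} and~\ref{maintheorB}. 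Once that is in place, the corollary follows from the short bookkeeping above.
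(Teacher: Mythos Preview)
Your skeleton is the paper's: assume two phase transitions, invoke Theorem~\ref{maintheorB} to get $t_1,t_2\in\R$, show this forces the pressure to be globally affine, contradiction. Where you hesitate---justifying that both affine ends coincide with $\ell(t)=t\phi(0)$---the paper proceeds directly, and the argument is simpler than you fear. From the proofs of Theorems~\ref{maintheA} and~\ref{maintheorB}, a phase transition at $t_i$ means precisely that $t_i\phi$ is not expanding, so there is an equilibrium state $\eta_i$ with $\lambda(\eta_i)=\int\log|Df|\,d\eta_i=0$. For a Maneville--Poumeau-like map one has $\log|Df|\ge0$ everywhere with equality only at the fixed point $0$, so $\lambda(\mu)=0$ forces $\mu=\delta_0$; hence $\eta_1=\eta_2=\delta_0$. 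Since $h_{\delta_0}(f)=0$, the measure $\delta_0$ is a zero-entropy equilibrium state at both $t_1<0$ and $t_2>0$, and therefore simultaneously a $\phi$-maximizing and a $\phi$-minimizing measure. No transfer-operator analysis beyond Corollary~\ref{corhyexp} is needed here, and your worry about a positive-entropy maximizing measure cannot materialize: once $t_i\phi$ is not hyperbolic, any maximizing measure $\mu$ with $h_\mu(f)>0$ would satisfy $h_\mu(f)+\int t_i\phi\,d\mu>P_{top}(f,t_i\phi)$, which is impossible.

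The paper's finish differs slightly from yours: having $\delta_0$ both maximize and minimize $\int\phi\,d\mu$ over $\mathcal{M}_1(f)$, it concludes that $\mu\mapsto\int\phi\,d\mu$ is constant, whence by specification (via \cite{T10}) $\phi$ is cohomologous to a constant, so $t\mapsto P_{top}(f,t\phi)$ is affine and in particular analytic---contradicting the assumed phase transition. Your convexity-plus-lower-bound route to $P_{top}(f,t\phi)\equiv\ell(t)$ is a legitimate alternative that avoids invoking the cohomology lemma; it gives a slightly more self-contained ending at the cost of one extra inequality. Either way, the substantive step is the identification $\eta_i=\delta_0$, and that is exactly the step you left open.
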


\

\subsection{Applications}

Due to the previous theorem, we obtain a good description of the large deviations and multifractal analysis problems.

\subsubsection{Large deviations}\label{sect:LDR}

In the nineties, Young, Kifer and Newhouse \cite{Y90,K90,KN91} addressed the question of the velocity of convergence of ergodic averages, establishing a connection between the theory of large deviations in probability and dynamical systems. Since then, this topic has attracted attention from the mathematical community (see e.g. \cite{MN08,YC16,BV19,TT20}).

In our context, since  $f$ has the specification property and is expansive, then the maximum entropy measure $\mu_{0}$ has the Gibbs property, by \cite{Bow74}. Thus, given the continuous observable $\phi : \Sc^{1} \rightarrow \R$, by \cite{Y90}, the  large deviation rate function of $\mu_{0}$ is well defined:
$$
LD_{\phi, a, b} := \lim\frac{1}{n}\log \mu_{0}\Big(x \in \Sc^{1} : \frac{1}{n}\sum_{i=0}^{n-1}\phi (f^{i}(x)) \in [a ,b] \Big) =
$$
$$-h_{top}(f) + \sup\Big\{h_{\nu}(f) : \int \phi d\nu \in [a , b]\Big\}.
$$
Note that we are interested only when $[a , b]$ intersects the Birkhoff  spectra of $\phi$:
$$
S_{\phi}:= \Big\{\alpha \in \R : \exists x \in \Sc^{1} \text{ with }\lim\frac{1}{n}\sum_{i=0}^{n-1}\phi (f^{i}(x)) = \alpha\Big\}.
$$
It follows from the specification property that Birkhoff spectra satisfy
$$
S_{\phi} = \Big\{\int \phi d\nu : \nu \in \mathcal{M}_{1}(f)\} = \{\int \phi d\nu : \nu \in \mathcal{M}_{e}(f)\Big\}
$$
and is a compact interval (see \cite{T10}).

Thus, the natural domain for studying the regularity of the function $(a , b) \mapsto LD_{\phi, a, b}$ will be 
$$
\Delta := \{(a , b) \in S_{\phi} \times S_{\phi} : a \leq b\}.
$$

\begin{maincorollary}\label{maincoroB}
If $\phi : \Sc^{1} \rightarrow \R$ is a H\"older continuous potential 
%and has bounded variation
then the large deviations rate function $\Delta \ni (a , b) \mapsto LD_{\phi, a,b}$ is convex and we can decompose $\Delta$ in regions $\Delta_{1}, \Delta_{2}, \Delta_{3}$ such that the large deviations rate function is affine in $\Delta_{1}$, constant and equal the $0$ in $\Delta_{2}$,  analytic and strictly convex in $\Delta_{3}$.
\end{maincorollary}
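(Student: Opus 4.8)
The plan is to reduce everything to the already-established description of the topological pressure function from Theorem \ref{maintheorB} (together with Theorems \ref{maintheogoo} and \ref{maintheA}) via a Legendre-transform argument. First I would recall the standard fact, valid because $f$ has specification and is expansive, that the large deviation rate function for the measure of maximal entropy is expressed variationally as
\[
LD_{\phi,a,b} = -h_{top}(f) + \sup\Big\{ h_\nu(f) : \int \phi\, d\nu \in [a,b]\Big\},
\]
and then connect this to the pressure through the conjugate-function identity. Concretely, define the concave function $\mathcal{E}(\alpha) := \sup\{ h_\nu(f) : \int\phi\, d\nu = \alpha\}$ on the interval $S_\phi$ (this is the entropy spectrum; finiteness and upper semicontinuity follow from expansiveness and compactness of $\mathcal{M}_1(f)$). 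By the variational principle, $P_{top}(f,t\phi) = \sup_{\alpha\in S_\phi}\big(\mathcal{E}(\alpha) + t\alpha\big)$, i.e. $t\mapsto P_{top}(f,t\phi)$ is the Legendre transform of $-\mathcal{E}$; conversely, since $\mathcal{E}$ is concave and upper semicontinuous, $\mathcal{E}(\alpha) = \inf_{t\in\R}\big(P_{top}(f,t\phi) - t\alpha\big)$. Because $t\mapsto P_{top}(f,t\phi)$ is convex, its Legendre dual $\mathcal{E}$ is automatically concave, which already gives convexity of $(a,b)\mapsto -\mathcal{E}$ restricted to $[a,b]$; and since $LD_{\phi,a,b} = -h_{top}(f) + \sup_{\alpha\in[a,b]}\mathcal{E}(\alpha)$ is, up to the additive constant $-h_{top}(f)$, a supremum of the concave function $\mathcal{E}$ over a subinterval with endpoints varying monotonically, convexity in $(a,b)$ on $\Delta$ follows from a direct check using concavity of $\mathcal{E}$.

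Next I would transfer the piecewise structure. Theorem \ref{maintheorB} tells us that if $\phi$ has a phase transition then there are $-\infty \le t_1 < t_2 \le +\infty$ (at least one finite) with $t\mapsto P_{top}(f,t\phi)$ analytic and strictly convex on $(t_1,t_2)$ and affine on each of $(-\infty,t_1]$ and $[t_2,+\infty)$; if $\phi$ has no phase transition, Theorem \ref{maintheogoo}/\ref{maintheA} give either strict convexity throughout or the degenerate cohomologous-to-constant case where the pressure is globally affine and $S_\phi$ is a single point. Taking Legendre transforms: the two affine pieces of the pressure, of slopes $\alpha_1 := \lim_{t\to t_1^-}\frac{d}{dt}P_{top}(f,t\phi)$ and $\alpha_2 := \lim_{t\to t_2^+}\frac{d}{dt}P_{top}(f,t\phi)$, force $\mathcal{E}$ to be affine on $[\min S_\phi,\alpha_1]$ and on $[\alpha_2,\max S_\phi]$ (on an affine branch of slope $\alpha_i$ the supremum defining $\mathcal{E}(\alpha)$ is attained with $\mathcal{E}$ linear); the strictly convex analytic portion of the pressure on $(t_1,t_2)$ Legendre-dualizes to a strictly concave, real-analytic branch of $\mathcal{E}$ on the open interval $(\alpha_1,\alpha_2)$, using that Legendre duality exchanges strict convexity/analyticity with strict concavity/analyticity on the interior where the derivative is a diffeomorphism. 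Moreover on the region where the relevant supremum equals $h_{top}(f)$, namely near the maximizing value $\alpha_0 := \int\phi\, d\mu_0$, the rate function $LD_{\phi,a,b}$ is identically $0$. This yields the three zones for $\alpha\mapsto \mathcal{E}(\alpha)$: affine on the two outer closed intervals, strictly concave analytic on the middle open interval, with the value $h_{top}(f)$ attained at the interior point $\alpha_0$.

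Finally I would assemble the decomposition of the two-dimensional domain $\Delta = \{(a,b)\in S_\phi\times S_\phi : a\le b\}$. For $(a,b)\in\Delta$ we have $LD_{\phi,a,b} = -h_{top}(f) + \sup_{\alpha\in[a,b]}\mathcal{E}(\alpha)$, and since $\mathcal{E}$ is concave its supremum on $[a,b]$ is $\mathcal{E}(\alpha_0)$ if $\alpha_0\in[a,b]$, $\mathcal{E}(b)$ if $b<\alpha_0$, and $\mathcal{E}(a)$ if $a>\alpha_0$. Set $\Delta_2 := \{(a,b)\in\Delta : \alpha_0\in[a,b]\}$; there $LD_{\phi,a,b} \equiv 0$. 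On the complement, $LD_{\phi,a,b}$ equals $\mathcal{E}(b)-h_{top}(f)$ (when $b<\alpha_0$) or $\mathcal{E}(a)-h_{top}(f)$ (when $a>\alpha_0$), so the behaviour of $(a,b)\mapsto LD_{\phi,a,b}$ is governed by a single coordinate running through the one-dimensional zones of $\mathcal{E}$ found above: let $\Delta_1$ be the set of $(a,b)$ whose active endpoint lies in an affine branch of $\mathcal{E}$ (rate function affine), and $\Delta_3$ the set where it lies in the strictly concave analytic branch (rate function analytic and strictly convex in $(a,b)$ — strict convexity because $-\mathcal{E}$ is then strictly convex in the active variable and constant in the other). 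Convexity of $LD$ on all of $\Delta$ was checked in the first step. The main obstacle I anticipate is not any single hard estimate but rather bookkeeping the boundary cases carefully: handling $t_1 = -\infty$ or $t_2 = +\infty$ (one affine branch disappears or becomes a half-line where $\mathcal{E}$ has a one-sided slope at an endpoint of $S_\phi$), the no-phase-transition case (then $\Delta_1 = \emptyset$ and $\Delta$ splits into $\Delta_2\cup\Delta_3$, unless $\phi$ is cohomologous to a constant, where $\Delta$ is a point), and verifying that "analytic" for $LD$ on $\Delta_3$ is the right two-variable statement — this last point is immediate once one notes that on $\Delta_3$ one coordinate is frozen out of the formula, so real-analyticity of $\mathcal{E}$ on the open middle interval suffices.
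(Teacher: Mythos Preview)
Your overall strategy is sound and is essentially the paper's Legendre-transform route: write $LD_{\phi,a,b}=-h_{top}(f)+\sup_{\alpha\in[a,b]}\mathcal{E}(\alpha)$ for the entropy spectrum $\mathcal{E}(\alpha)=\sup\{h_\nu(f):\int\phi\,d\nu=\alpha\}$, deduce the piecewise structure of $\mathcal{E}$ from that of the pressure via duality, and then split $\Delta$ according to whether the sup is attained at $\alpha_0=\int\phi\,d\mu_0$, at an endpoint lying in the strictly concave zone, or at an endpoint lying in an affine zone. The paper does this in two stages --- G\"artner--Ellis plus the spectral gap on the strictly convex middle, and a separate measure-construction lemma for the affine tails --- whereas your single appeal to Fenchel--Moreau duality (legitimate since $\mathcal{E}$ is concave and upper semicontinuous) is a cleaner packaging, closer to the Hofbauer argument the paper later invokes for the multifractal corollary.

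There is, however, a concrete slip in locating the affine pieces of $\mathcal{E}$. With your definitions $\alpha_1:=\lim_{t\to t_1^-}\tfrac{d}{dt}P_{top}(f,t\phi)$ and $\alpha_2:=\lim_{t\to t_2^+}\tfrac{d}{dt}P_{top}(f,t\phi)$ you are taking the limits from the \emph{affine} side of the pressure, so $\alpha_1$ equals the constant slope $\beta_{\min}=\min_{\mu}\int\phi\,d\mu=\min S_\phi$ of the left linear branch (by Proposition~\ref{propha}), and your interval $[\min S_\phi,\alpha_1]$ collapses to a point; similarly on the right. The correct breakpoints are the one-sided derivatives from the \emph{strictly convex} side, namely $\lambda_{\min}:=\lim_{t\to t_1^+}\tfrac{d}{dt}P_{top}(f,t\phi)$ and $\lambda_{\max}:=\lim_{t\to t_2^-}\tfrac{d}{dt}P_{top}(f,t\phi)$ in the paper's notation. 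Under Legendre duality an affine branch of the pressure corresponds to a single \emph{vertex} of $\mathcal{E}$, not to an affine piece; it is the possible \emph{corner} of the pressure at $t_i$ (the jump of the derivative from $\beta_{\min}$ to $\lambda_{\min}$, say) that produces an affine piece of $\mathcal{E}$, of slope $-t_1$, on $[\beta_{\min},\lambda_{\min}]$. Once you swap the side of the limits and correct this mechanism, your argument goes through and recovers exactly the regions $\Delta_1,\Delta_2,\Delta_3$ of the paper.
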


It is important to note that the regions $\Delta_{1}, \Delta_{2}$ and $\Delta_{3}$ obtained in the previous result will be explicit, moreover $\Delta_{1}$ can be an empty set.

\

\subsubsection{Multifractal analysis}

In multifractal analysis, we study invariant sets and measures with a multifractal structure. We want to measure the size of those sets in the sense of Hausdorff dimension or topological entropy, for instance. This study can be traced back to Besicovitch and has received contributions from many authors in recent years (see e.g. \cite{JR11,C14,IJT16,BV17} ).

In our context, we are interested in the case of Birkhoff averages.
Given a continuous observable $\phi : \Sc^{1} \rightarrow \R$ and $a,b \in \R$ define:
$$
X_{\phi , a , b}:= \Big\{x \in \Sc^{1}: a \leq \liminf \frac{1}{n}\sum_{i=0}^{n-1}\phi (f^{i}(x)) \leq \limsup \frac{1}{n}\sum_{i=0}^{n-1}\phi (f^{i}(x)) \leq b\Big\}.
$$
We aim to describe these sets from the point of view of topological pressure. Just like before, the natural domain of the function $(a , b) \mapsto h_{X_{\phi , a , b}}$ is $\Delta$, where $h_{Z}$ denotes the topological entropy restricted to the set $Z$.

\begin{maincorollary}\label{maincoroC}
    If $\phi : \Sc^{1} \rightarrow \R$ is a H\"older continuous potential 
    %and has bounded variation 
    then
    the large deviations rate function $\Delta \ni (a , b) \mapsto h_{X_{\phi, a,b}}(f)$ is concave, affine in $\Delta_{1}$, constant and equal the $h_{top}(f)$ in $\Delta_{2}$,  analytic and strictly concave in $\Delta_{3}$.
\end{maincorollary}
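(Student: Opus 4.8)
The plan is to derive the multifractal spectrum directly from the understanding of the topological pressure function provided by Theorems \ref{maintheogoo}, \ref{maintheA} and \ref{maintheorB}, via the standard variational/Legendre-transform machinery for systems with specification. First I would invoke the conditional variational principle for Birkhoff spectra available in our setting: since $f$ is conjugate to a mixing expanding map and hence has the periodic specification property and is expansive, one has
\[
h_{X_{\phi,a,b}}(f) \;=\; \sup\Big\{ h_{\nu}(f) : \nu \in \mathcal{M}_1(f),\ \textstyle\int \phi\,d\nu \in [a,b]\Big\}
\]
for every $(a,b)\in\Delta$, by the results of Takens--Verbitskiy / Thompson quoted in the paper (this is where specification and the fact that $S_\phi$ is a compact interval are used, exactly as for the large deviation statement in Corollary \ref{maincoroB}). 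Combined with the expression for $LD_{\phi,a,b}$ already recorded in Section~\ref{sect:LDR}, this immediately gives the clean identity $h_{X_{\phi,a,b}}(f) = h_{top}(f) + LD_{\phi,a,b}$ on $\Delta$, so that the assertions about $h_{X_{\phi,a,b}}$ follow term-by-term from Corollary \ref{maincoroB}: concavity from convexity of $LD$, affinity on $\Delta_1$, the constant value $h_{top}(f)$ on $\Delta_2$ (where $LD\equiv 0$), and analyticity together with strict concavity on $\Delta_3$.

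Thus the real content is pushed into Corollary \ref{maincoroB}, which in turn I would prove by a Legendre-duality argument from Theorem \ref{maintheorB}. Define $\mathcal{F}(a,b) := \sup\{h_\nu(f) : \int\phi\,d\nu\in[a,b]\}$; by the variational principle this equals $\sup_{t\in\R}\big( P_{top}(f,t\phi) - t a\big)$ when $b$ is on the right and symmetrically, and more precisely one writes $\mathcal{F}(a,b)$ as the supremum over $t\le 0$ of $P_{top}(f,t\phi)-ta$ intersected with the supremum over $t\ge 0$ of $P_{top}(f,t\phi)-tb$ — this is the usual fact that the entropy spectrum of Birkhoff averages is (essentially) the Legendre transform of the pressure, valid because $t\mapsto P_{top}(f,t\phi)$ is convex, finite and differentiable off a countable set. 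Now feed in the structural description from Theorem \ref{maintheorB} (or, in the no-phase-transition case, Theorem \ref{maintheA}): $P_{top}(f,t\phi)$ is piecewise-analytic, linear on two unbounded rays $(-\infty,t_1]$ and $[t_2,+\infty)$ and strictly convex analytic in between. The Legendre transform of such a function is again piecewise: on the "linear ray" part the transform is affine (the slopes $\int\phi\,d\mu$ over the equilibrium states at $t_1$, resp.\ $t_2$, which are the endpoints — or interior limits — of $S_\phi$ that get linearly interpolated), where it is bounded by $h_{top}(f)$ the value is constant equal to $h_{top}(f)$ (the Legendre transform of a line of slope $c$ evaluated at $c$ is its intercept, $h_{top}(f)$, since the linear pieces of $P_{top}$ have intercept $h_{top}(f)$ by our normalization/convexity), and on the strictly-convex-analytic piece the transform is strictly convex and analytic by the inverse function theorem. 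Translating this back into the two-variable function on $\Delta = \{a\le b\}\subset S_\phi\times S_\phi$ produces the three explicit regions $\Delta_1,\Delta_2,\Delta_3$; concavity/convexity of the resulting two-variable function is inherited from the one-variable Legendre transforms since $\mathcal{F}(a,b)$ is an infimum of the two one-variable suprema and both are concave.

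The main obstacle I anticipate is not the Legendre bookkeeping but making the identity $h_{X_{\phi,a,b}}(f) = h_{top}(f) + LD_{\phi,a,b}$ and the conditional variational principle fully rigorous at the \emph{boundary} cases: when $[a,b]$ meets $S_\phi$ only at an endpoint, when $a=b$ (so $X_{\phi,a,b}$ is a level set rather than a "thick" set, where lower bounds on entropy are most delicate), and when one of $t_1,t_2$ is infinite (so one of the two linear rays is absent and $\Delta_1$ may be empty, as the paper notes). For the level-set case $a=b$ one must quote the appropriate version of the Takens--Verbitskiy theorem that handles single values under specification, and one should check that the equilibrium states of $t\phi$ realize the relevant Birkhoff averages so that the supremum in the variational principle is attained — here the uniqueness of equilibrium states on the analytic piece (from Theorem \ref{maintheA}/\ref{maintheorB}) and their characterization as Gibbs measures is exactly what closes the gap. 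Once these boundary issues are dispatched, the explicit form of $\Delta_1,\Delta_2,\Delta_3$ reads off directly from the breakpoints $t_1,t_2$ and the values $h_{top}(f)$, and the analyticity/strict-concavity on $\Delta_3$ follows from that of $P_{top}$ on $(t_1,t_2)$ together with strict convexity there (which forces the derivative $t\mapsto \int\phi\,d\mu_t$ to be a real-analytic diffeomorphism onto the interior of the relevant sub-interval of $S_\phi$).
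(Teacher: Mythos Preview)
Your approach is correct and lands on the same key identity $h_{X_{\phi,a,b}}(f)=h_{top}(f)+LD_{\phi,a,b}$, after which Corollary~\ref{maincoroB} finishes the job exactly as you say. The route, however, is genuinely different from the paper's. You obtain the conditional variational principle $h_{X_{\phi,a,b}}(f)=\sup\{h_\nu(f):\int\phi\,d\nu\in[a,b]\}$ via specification (Takens--Verbitskiy type arguments), whereas the paper invokes Hofbauer's result for piecewise monotone maps, which gives $h_{X_{\phi,a,b}}(f)=\max_{s\in H\cap[a,b]}\hat\tau(s)$ directly in terms of the Legendre transform $\hat\tau$ of the pressure; the paper then identifies $\hat\tau(s)=h_{top}(f)-I(s)$ on the analytic piece and reads off the same identity. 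The paper even remarks that a specification-based approach (in the spirit of \cite{BCF22}) is available but opts for Hofbauer because of the one-dimensional setting. Your route is more portable to higher-dimensional specification systems; the paper's route avoids having to justify the conditional variational principle for the ``thick'' sets $X_{\phi,a,b}$ (including irregular points) and gets the Legendre structure for free.

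Two small corrections. First, Thompson \cite{T10} is cited in the paper for the shape of $S_\phi$ and cohomology facts, not for the conditional variational principle you need; you would have to import Takens--Verbitskiy (or an analogous statement) separately. Second, in your sketch of Corollary~\ref{maincoroB} you claim the linear pieces of $t\mapsto P_{top}(f,t\phi)$ have intercept $h_{top}(f)$; in fact, by Proposition~\ref{propha} they pass through the origin (the maximizing measure has zero entropy), so $P_{top}(f,t\phi)=t\beta_{\min}$ on $(-\infty,t_1]$ and $P_{top}(f,t\phi)=t\beta_{\max}$ on $[t_2,+\infty)$. This does not affect your proof of Corollary~\ref{maincoroC}, since you can (and should) simply quote Corollary~\ref{maincoroB} rather than re-derive it.
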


\

%%%%%%%%%%%%%%%%%%%%%%%%%%%%%%%%%%%%%%%

\section{Preliminaries}\label{prelim}

In this section, we provide some definitions and preparatory results needed to prove the main results.  We first recall some concepts and results related to Lyapunov exponents and metric entropy (Subsection~\ref{sec:A}), Ergodic optimization (Subsection~\ref{sec:B}) and the Transfer operator (Subsection~\ref{sec:C}).

%%%%%%%%%%%%%%%%%%%%%%%%%%%%%%%%%%%%%%%
\subsection{Ergodic Theory}\label{sec:A}

Now we state some definitions,
 notations and results from Ergodic Theory (see e.g. \cite{OV16} for more details).
 
The \emph{Birkhoff's Ergodic Theorem}  relates time and space averages of a given potential $\phi: X \to \R,$ more formally: 

\begin{theorem}(Birkhoff) Let $f:X\to X$ be a measurable transformation and $\mu$ be an $f$-invariant probability. Given any integrable function $\phi: X \to \R$, the limit:
$$\bar{\phi}(x)=\lim_{n\to\infty} \dfrac{1}{n}\sum_{j=0}^{n-1} \phi(f^j(x))$$
exists in $\mu$-a.e. $x \in X$. Furthermore, the function $\Bar{\phi}$ defined this way is integrable and satisfies
$$\int\bar{\phi}(x)d\mu(x)=\int\phi(x)d\mu (x).$$
Additionally, if $\mu$ is $f$-ergodic, then $\Bar{\phi}\equiv\int \phi d\mu$ for $\mu$-a.e..
\end{theorem}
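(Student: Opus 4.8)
The plan is to follow the classical route through Hopf's maximal ergodic theorem. Write $S_n\phi=\sum_{j=0}^{n-1}\phi\circ T^j$, and introduce $\overline{\phi}(x)=\limsup_{n\to\infty}\frac1n S_n\phi(x)$ and $\underline{\phi}(x)=\liminf_{n\to\infty}\frac1n S_n\phi(x)$. Since $(\frac1n S_n\phi)\circ T=\frac{n+1}{n}\cdot\frac1{n+1}S_{n+1}\phi-\frac1n\phi$, letting $n\to\infty$ shows that $\overline{\phi}$ and $\underline{\phi}$ are $T$-invariant. The crux is therefore to prove that $\overline{\phi}=\underline{\phi}$ $\mu$-almost everywhere, which gives existence of $\bar\phi$; the integrability and the identity for the integral then follow by soft arguments, and the ergodic case is immediate once one knows that $T$-invariant functions are a.e.\ constant under ergodicity.

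First I would prove the maximal ergodic theorem: if $\psi\in L^1(\mu)$ and $E=\{x:\sup_{n\ge1}S_n\psi(x)>0\}$, then $\int_E\psi\,d\mu\ge0$. I would use Garsia's short proof: set $\Psi_N=\max_{1\le n\le N}S_n\psi$ and $\Psi_N^+=\max(\Psi_N,0)$. From $S_n\psi=\psi+S_{n-1}\psi\circ T\le\psi+\Psi_N^+\circ T$ for $1\le n\le N$ one gets $\Psi_N\le\psi+\Psi_N^+\circ T$, hence on $\{\Psi_N>0\}$ (where $\Psi_N=\Psi_N^+$) we have $\psi\ge\Psi_N^+-\Psi_N^+\circ T$; integrating and using $\int\Psi_N^+\circ T\,d\mu=\int\Psi_N^+\,d\mu$ together with $\Psi_N^+\ge0$ gives $\int_{\{\Psi_N>0\}}\psi\,d\mu\ge\int\Psi_N^+\,d\mu-\int\Psi_N^+\,d\mu=0$. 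Since $\{\Psi_N>0\}\uparrow E$ and $|\psi|\in L^1$, dominated convergence gives $\int_E\psi\,d\mu\ge0$.

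Next, for rationals $a<b$, the set $B=B_{a,b}:=\{\underline{\phi}<a<b<\overline{\phi}\}$ is $T$-invariant, so I may restrict $T$ to $B$. On $B$ one has $\sup_n S_n(\phi-b)>0$, so applying the maximal ergodic theorem on $B$ to $\psi=\phi-b$ yields $\int_B(\phi-b)\,d\mu\ge0$, i.e.\ $\int_B\phi\,d\mu\ge b\,\mu(B)$; applying it to $\psi=a-\phi$ yields $\int_B\phi\,d\mu\le a\,\mu(B)$. As $a<b$, this forces $\mu(B)=0$. Taking the countable union over all rational pairs $a<b$ shows $\underline{\phi}=\overline{\phi}$ $\mu$-a.e., so $\bar\phi$ exists $\mu$-a.e.

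Finally, for integrability and $\int\bar\phi\,d\mu=\int\phi\,d\mu$, I would first take $\phi\ge0$: Fatou together with the invariance identity $\int\frac1n S_n\phi\,d\mu=\int\phi\,d\mu$ gives $\int\bar\phi\,d\mu\le\int\phi\,d\mu<\infty$, so $\bar\phi\in L^1$; for the reverse inequality, the truncations $\phi_M=\min(\phi,M)$ are bounded, so their Birkhoff averages converge boundedly and $\int\overline{\phi_M}\,d\mu=\int\phi_M\,d\mu$, whence $\int\bar\phi\,d\mu\ge\int\overline{\phi_M}\,d\mu=\int\phi_M\,d\mu\to\int\phi\,d\mu$ by monotone convergence. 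Splitting a general $\phi\in L^1$ as $\phi^+-\phi^-$ finishes the integrable case and the integral identity. If moreover $\mu$ is ergodic, then $\bar\phi$, being a $T$-invariant measurable function, is $\mu$-a.e.\ equal to a constant, which must equal $\int\bar\phi\,d\mu=\int\phi\,d\mu$. The one genuinely non-formal step, and hence the main obstacle, is the maximal ergodic theorem; a secondary point requiring care is the truncation argument that transfers the integral identity from bounded observables to all of $L^1$.
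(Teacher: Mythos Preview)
Your argument is the standard Garsia--Hopf route and is correct as written. However, the paper does not prove Birkhoff's theorem at all: it is stated in the preliminaries section purely as a classical result, with a reference to \cite{OV16} for details. So there is no ``paper's own proof'' to compare against; your proposal simply supplies a proof where the authors chose to cite the literature instead.
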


We denote the $f$-invariant probabilities space by $\mathcal{M}_{1}(f)$  and  the $f$-invariant and ergodic probabilities space by $\mathcal{M}_{e}(f)$.

The \emph{Lyapunov exponents} are important quantities that can sometimes be computed from time averages. These exponents translate the asymptotic rates of expansion and contraction of a dynamical system.
For our context, $f$ is a piecewise monotone on the circle, the Lyapunov exponents are defined simply as:

$$\lambda(x)=\lim_{n\to\infty} \dfrac{1}{n}\log|Df^{n}(x))|=\lim_{n\to\infty}\sum_{j=0}^{n-1}\dfrac{1}{n}\log|Df(f^j (x))|,$$
whether the limit exists.
The Lyapunov exponents $\lambda$ coincide with the time average for the observable $\log|Df|$  in each point $x$ where the limit exists.
On the other hand, given  $\mu \in \mathcal{M}_{e}(f)$, by Birkhoff's Ergodic Theorem, we have
$$\lambda(x)=\int \log|Df|d\mu \text{ for } \mu\text{-a.e. } x.$$ Thus, given $\mu \in \mathcal{M}_{1}(f)$ we define the Lyapunov exponent for this measure $\lambda(\mu):=\int \log|Df|d\mu$.

We recall an estimate for the metric entropy through the Lyapunov exponents; for proof in our context, see \cite{H91}:

\begin{theorem}(Margulis-Ruelle inequality)
If $\mu \in \mathcal{M}_{e}(f)$ then $h_\mu(f)\leq \max\{0 , \lambda(\mu)\}.$
\end{theorem}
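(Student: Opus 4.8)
The plan is to prove the stronger inequality $h_\mu(f)\le\lambda(\mu)$; since $h_\mu(f)\ge 0$ this yields $h_\mu(f)\le\max\{0,\lambda(\mu)\}$ (and in fact forces $\lambda(\mu)\ge0$, so the two right-hand sides coincide here). The starting point is the partition $\cA=\{J_1,\dots,J_k\}$ of $\Sc^1$ into the monotonicity branches of $f$. Transporting the digit partition of $g(x)=\deg(f)x\bmod1$ through the topological conjugacy of \cite{CM86}, $\cA$ is a (Borel) generator for $f$, so by the Kolmogorov--Sinai theorem $h_\mu(f)=h_\mu(f,\cA)$ for every $\mu\in\mathcal{M}_e(f)$. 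For $x\in\Sc^1$ and $n\ge1$ write $C_n(x)$ for the atom of $\bigvee_{j=0}^{n-1}f^{-j}\cA$ containing $x$. Again transporting the corresponding facts for $g$, $C_n(x)$ is a nondegenerate closed arc, $f^{n-1}$ restricts to a $C^1$ diffeomorphism from $C_n(x)$ onto the branch $J_{i_{n-1}(x)}$, one has $f^j(C_n(x))=C_{n-j}(f^j x)$ for $0\le j\le n-1$, and $\sup_x\diam C_n(x)\to0$ as $n\to\infty$.

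First I would bound the combinatorial entropy by a geometric quantity. Since $\sum_{C}\Leb(C)=1$ over the atoms $C$ of $\bigvee_{j=0}^{n-1}f^{-j}\cA$ and each such $C$ is a nondegenerate arc, concavity of $\log$ (the Gibbs inequality) gives
\[ H_\mu\Big(\bigvee\nolimits_{j=0}^{n-1}f^{-j}\cA\Big)=-\sum_C\mu(C)\log\mu(C)\le-\sum_C\mu(C)\log\Leb(C)=\int-\log\Leb\big(C_n(x)\big)\,d\mu(x). \]
Since $h_\mu(f,\cA)=\inf_n\frac1n H_\mu(\bigvee_{j=0}^{n-1}f^{-j}\cA)$, it therefore suffices to show that $\frac1n\int-\log\Leb(C_n(x))\,d\mu(x)\to\lambda(\mu)$.

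The last step is a bounded-distortion computation. By the mean value theorem applied to the diffeomorphism $f^{n-1}|_{C_n(x)}$ there is $\xi_n(x)\in C_n(x)$ with $\Leb(C_n(x))=\Leb(J_{i_{n-1}(x)})\,|Df^{n-1}(\xi_n(x))|^{-1}$, hence $-\log\Leb(C_n(x))=\log|Df^{n-1}(\xi_n(x))|-\log\Leb(J_{i_{n-1}(x)})$, and the last term is bounded uniformly, so contributes $0$ after division by $n$. Letting $\omega$ be a common modulus of continuity for $\log|Df|$ on $J_1,\dots,J_k$ and using that $f^j\xi_n(x)$ and $f^jx$ lie in the same branch with $|f^j\xi_n(x)-f^jx|\le\diam C_{n-j}(f^jx)$, the chain rule gives $\big|\log|Df^{n-1}(\xi_n(x))|-\log|Df^{n-1}(x)|\big|\le\sum_{j=0}^{n-2}\omega\big(\diam C_{n-j}(f^jx)\big)$. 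Integrating, using $f$-invariance and reindexing, $\frac1n\int\sum_{j=0}^{n-2}\omega(\diam C_{n-j}(f^jx))\,d\mu(x)=\frac1n\sum_{m=2}^{n}\int\omega(\diam C_m)\,d\mu$, a Cesàro average of $b_m:=\int\omega(\diam C_m)\,d\mu$; since $b_m\le\omega(\sup_x\diam C_m(x))\to0$, this average tends to $0$. Finally $\frac1n\int\log|Df^{n-1}(x)|\,d\mu=\frac{n-1}{n}\int\log|Df|\,d\mu=\frac{n-1}{n}\lambda(\mu)\to\lambda(\mu)$ by invariance. Combining the three contributions gives $\frac1n\int-\log\Leb(C_n(x))\,d\mu(x)\to\lambda(\mu)$, hence $h_\mu(f)\le\lambda(\mu)\le\max\{0,\lambda(\mu)\}$.

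The main obstacle is the distortion control along the cylinders $C_n(x)$: the images $f^j(C_n(x))=C_{n-j}(f^jx)$ are small only when $n-j$ is large, so the last few iterates carry no smallness and must be absorbed separately — this is precisely what the Cesàro-average argument (splitting the index range at a threshold $T$, letting first $n\to\infty$ and then $T\to\infty$) takes care of. A secondary, bookkeeping point is justifying that $\cA$ is a genuine generating partition and that its cylinders have the stated arc structure and full-branch images, for which the conjugacy with $g(x)=\deg(f)x\bmod1$ from \cite{CM86} is essential. (Alternatively, feeding the same estimates into the Shannon--McMillan--Breiman theorem yields the pointwise statement $\lim_n-\frac1n\log\Leb(C_n(x))=\lambda(\mu)$ for $\mu$-a.e.\ $x$; the integrated version above is preferable since it avoids matching almost-everywhere limits.)
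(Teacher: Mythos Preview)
The paper does not prove this inequality; it merely records the statement and refers to Hofbauer \cite{H91} for a proof valid for piecewise monotone maps. Your argument is correct and is the classical Rokhlin--Hofbauer route: the partition $\cA$ into monotonicity branches generates, Jensen's inequality bounds $H_\mu\big(\bigvee_{j=0}^{n-1}f^{-j}\cA\big)$ by $\int-\log\Leb(C_n)\,d\mu$, and a bounded-distortion estimate along cylinders shows that the latter grows like $n\lambda(\mu)$.

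One point worth stating more explicitly is why $\cA$ coincides with the digit partition of $g(x)=\deg(f)\,x$ transported by the conjugacy $h$. Since each $f|_{J_m}$ is onto $\Sc^1$, the arcs $J_m$ are exactly the maximal arcs of injectivity of $f$, and any topological conjugacy must carry maximal arcs of injectivity of $f$ to those of $g$, i.e.\ to the digit intervals; hence $h(C_n(x))$ is a $g$-cylinder of diameter $\deg(f)^{-n}$ and $\sup_x\diam C_n(x)\le\omega_{h^{-1}}(\deg(f)^{-n})\to0$ by uniform continuity of $h^{-1}$. Even without this uniform statement, $\mu$-a.e.\ shrinking of cylinders (immediate once $\cA$ generates) together with dominated convergence already yields $b_m=\int\omega(\diam C_m)\,d\mu\to0$, which is all the Ces\`aro step requires.
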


\

\subsection{Ergodic optimization}\label{sec:B}

We will see that the "good" potentials maximize invariant probabilities with positive Lyapunov exponents. Therefore, we recall some key concepts and results in ergodic optimization.

Let $T:X\rightarrow X$ be a continuous transformation of a compact metric space. For each continuous function $\phi: X\rightarrow\mathbb{R}$ we define the maximum ergodic average 

\begin{align*}
    \beta(\phi):=\sup\limits_{\mu\in \mathcal{M}_1(T)}\int \phi d\mu=\sup\limits_{x\in M}\limsup\limits_{n\rightarrow\infty}\frac{1}{n}\sum\limits_{k=0}^{n-1}\phi(T^{k}(x))
\end{align*}
and the set of all maximizing measures of $\phi$:

\begin{align*}
    \mathcal{M}_{\max}(\phi):=\Big\{\mu\in\mathcal{M}_1(T);\int \phi d\mu=\beta(\phi)\Big\}.
\end{align*}

The problem of understating which orbits or measures attain the maximum ergodic average is known as \emph{Ergodic Optimization}; for a review about the subject, see e.g. \cite{J19} 

We will need a result of ergodic optimization; for the proof see \cite[Lemma 3.3]{M10}:

 \begin{theorem}\label{morris}
Let $T:X\rightarrow X$ be a continuous transformation of a compact metric space. Suppose that $\mathcal{U}$ is a dense  subset of $\mathcal{M}_{e}(T)$. Then the set
\begin{align*}
    U:=\Big\{\phi\in C(M, \R);\mathcal{M}_{\max}(\phi)\subset \mathcal{U} \Big\}
\end{align*}
is dense in $C(M, \R)$. 
%Conversely, if $U\subset C(M)$ is open and dense, then the set
%\begin{align*}
   % \mathcal{U}:=\Big\{\mathcal{M}_{e}(T)\cap \bigcup\limits_{\phi\in U}\mathcal{M}_{\max}(\phi)\Big\}
%\end{align*}
%is open and dense in $\mathcal{M}_{e}(T)$
\end{theorem}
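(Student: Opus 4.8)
\emph{Proof plan.} The plan is, given $\phi\in C(M,\R)$ and $\varepsilon>0$, to produce $\psi\in C(M,\R)$ with $\|\psi-\phi\|_{\infty}\le\varepsilon$ and $\mathcal{M}_{\max}(\psi)\subseteq\mathcal{U}$. The first move is a simple but decisive reduction. For any potential $\psi$ the maximizing set $\mathcal{M}_{\max}(\psi)$ is a non-empty, weak${}^{*}$-compact, and \emph{convex} subset of $\mathcal{M}_{1}(T)$, whereas $\mathcal{U}\subseteq\mathcal{M}_{e}(T)$ consists of extreme points of $\mathcal{M}_{1}(T)$; a convex set all of whose points are extreme is a singleton, so $\mathcal{M}_{\max}(\psi)\subseteq\mathcal{U}$ is equivalent to $\mathcal{M}_{\max}(\psi)=\{\mu\}$ for a single $\mu\in\mathcal{U}$. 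Thus the task becomes: arbitrarily near $\phi$ find a potential with a unique maximizing measure, and moreover arrange that this maximizing measure lands inside the prescribed dense set $\mathcal{U}$.

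For the ``unique maximizing measure'' part I would use convexity of $\beta$. Writing $\beta(\psi)=\sup_{\mu\in\mathcal{M}_{1}(T)}\int\psi\,d\mu$ as a supremum of continuous linear functionals shows $\beta$ is convex and $1$-Lipschitz on $C(M,\R)$, with subdifferential at $\psi$ equal to $\mathcal{M}_{\max}(\psi)$. Since $C(M,\R)$ is separable, Mazur's theorem gives a dense $G_{\delta}$ set $R\subseteq C(M,\R)$ on which $\beta$ is G\^ateaux differentiable, i.e.\ on which $\mathcal{M}_{\max}(\cdot)$ is single-valued; this is the classical genericity of a unique maximizing measure in ergodic optimization. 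Replacing $\phi$ by a point of $R$ within $\varepsilon/2$ of it, we may assume $\mathcal{M}_{\max}(\phi)=\{\nu\}$ with $\nu\in\mathcal{M}_{e}(T)$, and then $\phi$ \emph{exposes} $\nu$: $\int\phi\,d\nu>\int\phi\,d\mu$ for all $\mu\neq\nu$.

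Next I would record a quantitative stability property of a strict maximizer. Since $\nu$ is the unique $\phi$-maximizing measure, for each $\delta>0$ the weak${}^{*}$-continuous map $\mu\mapsto\int\phi\,d\mu$ attains on the weak${}^{*}$-compact set $\{\mu:d(\mu,\nu)\ge\delta\}$ a maximum $\beta(\phi)-c(\delta)$ with $c(\delta)>0$. Hence any perturbation $g$ with $\|g\|_{\infty}<c(\delta)/3$ satisfies $\mathcal{M}_{\max}(\phi+g)\subseteq\{\mu:d(\mu,\nu)<\delta\}$, because measures far from $\nu$ lose to $\nu$ by a margin no small perturbation can recover. Fix $\delta$ so small that $c(\delta)/3\le\varepsilon/2$; using density of $\mathcal{U}$ in $\mathcal{M}_{e}(T)$ and $\nu\in\mathcal{M}_{e}(T)$, choose $\mu_{0}\in\mathcal{U}$ with $d(\mu_{0},\nu)<\delta$. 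It then suffices to find $h\in C(M,\R)$ with $\|h\|_{\infty}<c(\delta)/3$ such that $\phi+h$ exposes $\mu_{0}$, for then $\psi:=\phi+h$ has $\mathcal{M}_{\max}(\psi)=\{\mu_{0}\}\subseteq\mathcal{U}$ and $\|\psi-\phi\|_{\infty}\le\varepsilon$.

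The construction of this small, ``locally exposing'' perturbation $h$ is the heart of the argument and the step I expect to be the main obstacle. One needs $h$ of small uniform norm with $\int(\phi+h)\,d\mu_{0}>\int(\phi+h)\,d\mu$ for every $\mu\in\{d(\mu,\nu)<\delta\}\setminus\{\mu_{0}\}$, and the subtlety is that perturbing $\phi$ alters the functionals $\mu\mapsto\int\psi\,d\mu$ only \emph{linearly} in $\mu$, so forcing a single measure to win is genuinely an exposedness phenomenon, not something a generic bump achieves. I would proceed in two stages: first a further generic ($R$-)perturbation contracting the maximizing set to a single exposed measure $\nu'$ inside $B_{\delta}(\nu)$; then a finite-dimensional adjustment built from test functions $f_{1},\dots,f_{k}$ separating a basic weak${}^{*}$-neighbourhood of $\nu$, using a continuity/degree argument to steer the unique maximizing measure across $\mathcal{M}_{e}(T)\cap B_{\delta}(\nu)$ onto the prescribed $\mu_{0}$. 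The essential use of the hypothesis is that $\mu_{0}$ may be taken \emph{anywhere} in the dense set $\mathcal{U}$ near $\nu$, so one only has to reach \emph{some} point of $\mathcal{U}$ in $B_{\delta}(\nu)$, and the stability estimate keeps the required perturbation of size $\le c(\delta)\le\varepsilon$.
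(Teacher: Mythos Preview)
The paper does not prove this statement; it simply quotes it from Morris \cite{M10} (``for the proof see \cite[Lemma 3.3]{M10}''). So there is no in-paper argument to compare against, and your proposal has to stand on its own.

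Your opening moves are sound: the reduction ``$\mathcal{M}_{\max}(\psi)\subset\mathcal{U}\Leftrightarrow\mathcal{M}_{\max}(\psi)=\{\mu\}$ with $\mu\in\mathcal{U}$'' is correct, Mazur's theorem gives the generic uniqueness, and your stability estimate $c(\delta)$ is a clean way to localise the problem. The trouble is exactly where you flag it: the construction of the ``locally exposing'' perturbation $h$. Your proposed mechanism --- a continuity/degree argument steering the unique maximizing measure through $\mathcal{M}_{e}(T)\cap B_{\delta}(\nu)$ onto a prescribed $\mu_{0}$ --- does not work. The space $\mathcal{M}_{e}(T)$ carries no manifold structure (for a full shift it is a dense subset of a Poulsen simplex, typically totally disconnected in any reasonable sense), so there is no degree theory to invoke; and the map $\psi\mapsto\mu_{\psi}$ from the generic set $R$ to $\mathcal{M}_{e}(T)$ is continuous but not open, so a finite-dimensional family $\phi+\sum c_{i}f_{i}$ has no reason to sweep out a set meeting $\mathcal{U}$. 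Your last sentence (``one only has to reach \emph{some} point of $\mathcal{U}$'') is the right instinct, but nothing in the plan forces the image of a small ball in $C(M)$ to have nonempty interior in $\mathcal{M}_{e}(T)$.

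What Morris actually uses is an input you never invoke: a theorem of Jenkinson that for \emph{every} ergodic measure $\mu$ there exists $g\in C(M,\R)$ with $\mathcal{M}_{\max}(g)=\{\mu\}$. One first picks $\mu\in\mathcal{U}$ with $\int\phi\,d\mu>\beta(\phi)-\varepsilon$ (possible since $\mathcal{M}_{\max}(\phi)$ contains an ergodic measure and $\mathcal{U}$ is weak${}^{*}$-dense in $\mathcal{M}_{e}(T)$), and then combines $\phi$ with such an exposing function $g$ for $\mu$ to produce the desired $\psi$. The point is that Jenkinson's construction hands you the exposing function for the \emph{target} measure directly, rather than trying to deform the one you started with; this is the missing idea in your step~5.
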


% \begin{theorem}\label{morris}
%Let $T:M\rightarrow M$ be a continuous transformation of a compact metric space. Suppose that $\mathcal{U}$ is an open and dense  subset of $\mathcal{M}_{e}(T)$. Then the set
%\begin{align*}
   % U:=\Big\{\phi\in C(M);\overline{\mathcal{M}_{e}(T)}\cap\mathcal{M}_{\max}(\phi)\subset \mathcal{U} \Big\}
%\end{align*}
%is open and dense in $C(M)$. Conversely, if $U\subset C(M)$ is open and dense, then the set
%\begin{align*}
   % \mathcal{U}:=\Big\{\mathcal{M}_{e}(T)\cap \bigcup\limits_{\phi\in U}\mathcal{M}_{\max}(\phi)\Big\}
%\end{align*}
%is open and dense in $\mathcal{M}_{e}(T)$
%\end{theorem}
%%%%%%%%%%%%%%%%%%%%%%%%%%%%%%%%%%%%%%%%

\subsection{Transfer operator}\label{sec:C}

In this section,  we recall some properties of the transfer operators. For more details on the transfer operator, see e.g. \cite{S12} or \cite{PU10}.

In what follows, given $A : E \rightarrow E$ a bounded linear operator, we denote its spectral radius by $\rho(A)$.

Let $f$ be as in our context. Given a complex continuous function $\phi: \Sc^{1} \rightarrow \C$ , we define the Ruelle-Perron-Frobenius operator or \emph{transfer operator} $\mathcal{L}_{f, \phi}$ acting on functions $g : \Sc^{1} \rightarrow \C$ this way:
 $$
 \mathcal{L}_{f,\phi}(g)(x) := \sum_{f(y) = x}e^{\phi(y)}g(y). 
 $$

If $\phi$ is a real continuous function, via Mazur's Separation Theorem, since $\Lo_{f,\phi}$ is a positive operator then it has $\rho(\Lo_{f,\phi}|_{C^0})$ as an eigenvalue for its dual operator, that is, there exists a probability $\nu_\phi$ with $(\Lo_{f,\phi}|_{C^{0}})^* \nu_{\phi} = \rho(\Lo_{f,\phi}|_{C^0}) \nu_\phi$. This probability is often referred to as a \emph{conformal measure}.

Generally, verifying that a specific linear operator has the spectral gap property is complicated. Sometimes, it is convenient to consider a weaker spectral property, for instance, the called \emph{quasi-compactness}:
 Given $E$ a complex Banach space and $A:E \to E$ a bounded linear operator, we say that $A$ is quasi-compact if there exist $0<\sigma<\rho(\mathcal{A})$ and a decomposition of $E=F\oplus H$ as follows: $F$ and $H$ are closed and $\mathcal{A}$-invariant, $\dim F < \infty$, $\rho(A_{_|F})>\sigma$ and $\rho(\mathcal{A}_{|H}) \leq \sigma$.

To obtain quasi-compactness, we use an alternative equivalent definition for it via the \emph{essential spectral radius}:
Given $E$ a complex Banach space and $A:E \to E$ a bounded linear operator, define the essential spectral radius:
$$\rho_{ess}(A):=\inf\{r>0; \;\text{sp}(A)\setminus \overline{B(0,r)} \text{ contains only eigenvalues of finite multiplicity}\}$$
\noindent Thus, quasi-compactness is equivalent to having $\rho_{ess}(A) < \rho(A)$, and so estimates on the essential spectral radius and the spectral radius will be crucial.

\subsection{Analytic functions on Banach spaces}
In this section, we recall the notion of analyticity for functions on Banach spaces. Let $E , F$ be Banach spaces and denote by $\mathcal{L}^{i}_{s}(E , F)$ the space of continuous and symmetric $i$-linear transformations from
$E^{i}$ to $F$. %For notational simplicity, given $P_{i} \in \mathcal{L}^{i}_{s}(E , F)$ and $h\in E$ we set
%$P_{i}(h) := P_{i}(h,\ldots,h)$.

\begin{definition}
Let $U\subset E$ be an open subset. We say the function $g : U \subset E \rightarrow F$
is \emph{analytic} if for all $x \in U$ there exists $r > 0$ and for every $i\ge 1$ there exists $P_{i} \in \mathcal{L}^{i}_{s}(E , F)$ (depending on $x$) such that
$$
g(x + h) = g(x) + \sum_{i=1}^{\infty}\frac{P_{i}(h, \ldots, h)}{i!}
$$ for all $h \in B(0 , r)$ and the convergence is uniform.
\end{definition}

Analytic functions on Banach spaces have similar properties to those of real and complex analytic functions.
For instance, if $g : U \subset E \rightarrow F$ is analytic then $g$ is $C^{\infty}$ and for every $x\in U$ one has
$P_{i}=D^{i}f(x)$. For more details, see e.g. \cite[Chapter~12]{Cha85}.

%%%%%%%%%%%%%%%%%%%%%%%%%%%%%%%%%%%%%

%\subsection{Thermodynamic formalism for dynamics topologically conjugate to an expanding map}

%Colocar uma secao que assumindo que é conjugado a uma expansora tem vários resultados gerais, de maneira que englobe o caso de dimensõa 1 e de dimensão alta regular, assim várias das proposicoes que estao para frente aparecem aqui.

%Pegar dinamica conjugada a uma expansora mixing.

%%%%%%%%%%%%%%%%%%%%%%%%%%%%%%%%%%%%%%%%%%%%%%%%%

\section{Proof of the main results}\label{Proofs}

%%%%%%%%%%%%%%%%%

\subsection{Hyperbolic potentials}

Following \cite{IR12}, given a continuous dynamical system $T : X \rightarrow X$ acting on a compact metric space and a continuous potential $\phi : X \rightarrow \R$, we say that $\phi$ is hyperbolic if $$ \sup_{\mu \in \mathcal{M}_{1}(T)}\int \phi d\mu < P_{top}(T , \phi).$$

\begin{remark}\label{remmax}
Note that $\phi$ is hyperbolic if and only if there is no equilibrium state $\mu \in \mathcal{M}_{1}(T)$ with respect to $\phi$ such that $h_{\mu}(T) = 0$. 
\end{remark}

For certain dynamics, such as multimodal or rational maps, the hyperbolic conditions on the potential ensure good thermodynamic properties, including the uniqueness of equilibrium states and the spectral gap property of the associated transfer operator (see  \cite{IR12,HR14}).

\begin{proposition}\label{propha}
If $h_{top}(T) > 0$ and $\phi$ is not hyperbolic then:

\begin{enumerate}
    \item $\phi$ has a maximizing measure with zero entropy, i.e., there is $\eta \in \mathcal{M}_{1}(T)$ such that $\ds\int \phi d\eta = \sup_{\mu \in \mathcal{M}_{1}(T)}\int \phi d\mu$ and $h_{\eta}(T) = 0$;
    \item there is $t_{0} \in (0 , 1]$ such that $ P_{top}(T , t\phi) = t\int \phi d\eta $, for all $t \geq t_{0}$, in particular $\phi$ has a freezing phase transition in $t_{0}$.
\end{enumerate}
\end{proposition}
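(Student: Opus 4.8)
The plan is to analyze the concave function $t \mapsto P_{top}(T,t\phi)$ on $[0,+\infty)$ and exploit the failure of hyperbolicity together with the variational principle. Set $\beta:=\beta(\phi)=\sup_{\mu\in\mathcal M_1(T)}\int\phi\,d\mu$. Since $\phi$ is not hyperbolic, by definition $P_{top}(T,\phi)\le\beta$; combined with the trivial inequality $P_{top}(T,\phi)\ge h_\mu(T)+\int\phi\,d\mu\ge\int\phi\,d\mu$ applied to any $\mu$ near a maximizing measure (maximizing measures exist by compactness of $\mathcal M_1(T)$ and upper semicontinuity of $\mu\mapsto\int\phi\,d\mu$), one gets $P_{top}(T,\phi)=\beta$. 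More is true: using upper semicontinuity of entropy is not available in general, so instead I would argue directly that the supremum in the variational principle for $t\phi$, $t\ge 1$, is governed entirely by measures whose integral is close to $\beta$. Precisely, for $t\ge 1$ write $t\phi = \phi + (t-1)\phi$; then for any invariant $\mu$, $h_\mu(T)+\int t\phi\,d\mu = \big(h_\mu(T)+\int\phi\,d\mu\big)+(t-1)\int\phi\,d\mu \le P_{top}(T,\phi)+(t-1)\beta = \beta + (t-1)\beta = t\beta$. Taking the supremum over $\mu$ gives $P_{top}(T,t\phi)\le t\beta$ for all $t\ge 1$. On the other hand, the reverse inequality $P_{top}(T,t\phi)\ge t\beta$ is clear by testing against a maximizing measure $\eta_0\in\mathcal M_{\max}(\phi)$ (so $h_{\eta_0}(T)\ge 0$ gives $P_{top}(T,t\phi)\ge t\int\phi\,d\eta_0 = t\beta$). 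Hence $P_{top}(T,t\phi)=t\beta$ for all $t\ge 1$.

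Next I would produce the maximizing measure of zero entropy and the optimal phase-transition parameter. Consider $p(t):=P_{top}(T,t\phi)$, which is convex and, by the above, equals $t\beta$ on $[1,+\infty)$, while $p(0)=h_{top}(T)>0$. Since $p$ is convex, finite, and affine on $[1,\infty)$, there is a well-defined $t_0:=\inf\{t\ge 0 : p(t)=t\beta\}$; by continuity of $p$ and the affine behaviour past $1$ we have $p(t)=t\beta$ for all $t\ge t_0$ and $p(t)>t\beta$ for $t<t_0$ (strictly, by convexity: if $p(s)=s\beta$ for some $s<t_0$ and $p$ also equals $t\beta$ at $t=1$, convexity would force $p\le t\beta$ on all of $[s,1]$, hence $p\le t\beta$ everywhere on $[s,\infty)$, but then the derivative condition below produces a maximizing measure of zero entropy already at $s$, contradicting minimality only if we are careful — so I would simply define $t_0$ as this infimum and note $t_0\le 1$, and $t_0>0$ because $p(0)=h_{top}(T)>0=0\cdot\beta$). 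At $t=t_0$ (or at any $t>t_0$), since $p(t)/t = \beta$ is achieved, pick for each $t>t_0$ an equilibrium state $\mu_t$ for $t\phi$ (equilibrium states exist here since $f$ is expansive with specification, so entropy is upper semicontinuous); then $h_{\mu_t}(T)+t\int\phi\,d\mu_t = t\beta$, and since $\int\phi\,d\mu_t\le\beta$ we get $h_{\mu_t}(T)\le t(\beta-\int\phi\,d\mu_t)$ together with $h_{\mu_t}(T)\ge t\beta - t\int\phi\,d\mu_t\ge 0$. Letting $t\to+\infty$ forces $\int\phi\,d\mu_t\to\beta$, and any weak-$*$ accumulation point $\eta$ of $(\mu_t)$ satisfies $\int\phi\,d\eta=\beta$ (so $\eta$ is maximizing) and, by upper semicontinuity of entropy, $h_\eta(T)\le\limsup_t h_{\mu_t}(T)$. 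To pin $h_\eta(T)=0$: from $h_{\mu_t}(T)=t\beta - t\int\phi\,d\mu_t$ and the convexity bound $p(t)\le p(t_0)+\text{(derivative)}(t-t_0)$ one shows $t(\beta-\int\phi\,d\mu_t)\to 0$; more cleanly, $h_{\mu_t}(T) = p(t)-t\int\phi\,d\mu_t = t\beta - t\int\phi\,d\mu_t$, and since $p$ is affine with slope $\beta$ beyond $t_0$, the one-sided derivatives of $p$ at large $t$ equal $\beta$, which by the standard characterization of the subdifferential of the pressure (tangent functionals are equilibrium states) forces $\int\phi\,d\mu_t = \beta$ exactly for every $t>t_0$, whence $h_{\mu_t}(T)=0$ and we may take $\eta=\mu_t$ for any such $t$, or its accumulation point. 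This yields item (1).

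Finally, item (2): with $\eta$ the zero-entropy maximizing measure from (1), $\int\phi\,d\eta=\beta$ gives $P_{top}(T,t\phi)\ge t\int\phi\,d\eta = t\beta$ for all $t$, and combined with $P_{top}(T,t\phi)\le t\beta$ for $t\ge 1$ (proved in the first paragraph) we get $P_{top}(T,t\phi)=t\int\phi\,d\eta$ for all $t\ge t_0$ (extending from $[1,\infty)$ down to $t_0$ by the definition of $t_0$). That $t_0>0$ follows from $p(0)=h_{top}(T)>0$, and $t_0\le 1$ from the first paragraph. Since $p$ is strictly larger than the affine function $t\mapsto t\beta$ on $(-\infty,t_0)$ near $t_0$ while agreeing with it on $[t_0,\infty)$, $p$ cannot be analytic at $t_0$ (an analytic function agreeing with an affine function on a half-line is affine everywhere, contradicting $p(0)>0$), so $\phi$ has a thermodynamic phase transition at $t_0$. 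The main obstacle I anticipate is the careful justification that the tangent functional to the pressure at parameters $t\ge t_0$ is realized by an equilibrium state and hence that such a state is genuinely maximizing with zero entropy — this needs expansiveness/specification to guarantee both existence of equilibrium states and upper semicontinuity of entropy, all of which hold in the paper's setting by the conjugacy to $x\mapsto\deg(f)x$; the rest is convex-analysis bookkeeping.
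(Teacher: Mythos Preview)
Your argument for item~(2) --- the splitting $t\phi=\phi+(t-1)\phi$ to obtain $P_{top}(T,t\phi)=t\beta$ for $t\ge 1$, the definition of $t_0$ as the infimum of parameters where this equality holds, the observation $t_0>0$ from $h_{top}(T)>0$, and the identity theorem to rule out analyticity at $t_0$ --- matches the paper's proof essentially line for line.

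For item~(1), however, you take a circuitous route and in doing so import hypotheses the proposition does not assume: the statement is for a general continuous $T:X\to X$ on a compact metric space, with no expansiveness or specification, so existence of equilibrium states and upper semicontinuity of entropy are not available. More to the point, none of that machinery is needed, and you already hold the ingredients for the paper's one-line argument. You have $P_{top}(T,\phi)=\beta$ and a maximizing measure $\eta_0$ with $\int\phi\,d\eta_0=\beta$; the variational principle then gives $h_{\eta_0}(T)+\beta=h_{\eta_0}(T)+\int\phi\,d\eta_0\le P_{top}(T,\phi)=\beta$, hence $h_{\eta_0}(T)=0$. That is the entire proof of~(1) in the paper. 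Your subdifferential/equilibrium-state detour is correct in the paper's specific setting but is both unnecessary and, as written, not valid at the stated level of generality.
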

\begin{proof}
$(1)]$ Let $\eta \in \mathcal{M}_{1}(T)$ be, with $\int \phi d\eta = P_{top}(T , \phi)$. Hence $h_{\eta}(T) = 0$. Given $\mu \in \mathcal{M}_{1}(T)$, we have that $\int \phi d\mu \leq h_{\mu}(T) + \int \phi d\mu \leq P_{top}(T , \phi)= h_{\eta}(T) + \int \phi d\eta$. Thus  $\int \phi d\eta \geq \int \phi d\mu$, for all $\mu \in \mathcal{M}_{1}(T)$.\\

$(2)]$ Let $t \geq 1$. Given $\mu \in \mathcal{M}_{1}(T)$, we have that
$$
h_{\mu}(T) + t\int \phi d\mu \leq h_{\mu}(T) + \int \phi d\mu + (t-1)\int \phi d\mu \leq P_{top}(T , \phi) + (t-1)\int \phi d\mu \leq 
$$
$$
\int \phi d\eta  + (t-1)\int\phi d\eta = t\int\phi d\eta \Rightarrow P_{top}(T , t\phi) = t\int\phi d\eta.
$$
Define $t_{0} := \inf\{ t \in (0 , 1]: t\phi \text{ is not hyperbolic }\}$. Since  $h_{top}(T) > 0$, then $t_{0} \in (0 , 1]$. Furthermore, for what we have already proved,  $P_{top}(T , t\phi) = t\int \phi d\eta $, for all $t \geq t_{0}$. If $\R \in t \mapsto P_{top}(T , t\phi)$ was analytic then $P_{top}(T , t\phi) = t\int \phi d\eta $, for all $t \in \R$. In particular $h_{top}(T) = P_{top}(T , 0\phi) = 0$, which is absurd.
\end{proof}

\subsection{Expanding potentials}

%\begin{remark}
%Since  $\log |Df |$ is bounded and $Df$ is continuous, except in the maximum in a finite subset $p_{1}, \ldots ,p_{k}$ with $\mu(\{p_{i}\}) = 0$ for all $\mu \in \mathcal{M}_{1}(f),$ we will have that $ \mathcal{M}_{1}(f) \ni \mu \mapsto \lambda(\mu)$ is continuous.
%\end{remark}

As a first result, we show that the Lyapunov exponent varies continuously in the space of invariant probability measures. 
\;
\begin{lemma}\label{expconti}
    The function $\mathcal{M}_1(f)\ni \mu\mapsto \int\log|Df|d\mu$ is continuous.
  \end{lemma}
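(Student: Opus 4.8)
The plan is to prove continuity of $\mu \mapsto \int \log|Df|\, d\mu$ on $\mathcal{M}_1(f)$ (with the weak-$*$ topology) by showing that $\log|Df|$, although possibly discontinuous at the finitely many break points $\partial J_1, \dots, \partial J_k$, can be sandwiched between continuous functions in a way that controls the integral. The key structural fact is that $f$ restricted to each arc $J_m$ is a $C^1$-diffeomorphism onto $\Sc^1$, so $\log|Df|$ is continuous on the interior of each $J_m$ and extends continuously to each closed arc $J_m$ from the inside; the only failure of global continuity occurs at the common endpoints of adjacent arcs, where the one-sided limits may disagree (except at fixed points, where the derivative is assumed well defined). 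In particular $\log|Df|$ is a bounded function with at most $2k$ discontinuity points, and it is the pointwise limit of a monotone (or uniformly bounded) sequence of continuous functions obtained by interpolating across small neighbourhoods of the break points.

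The main step is the following. Fix $\varepsilon > 0$. Choose disjoint open arcs $I_1, \dots, I_N$ around the break points, of total Lebesgue measure as small as we like, and build two continuous functions $\psi_\varepsilon^- \le \log|Df| \le \psi_\varepsilon^+$ on $\Sc^1$ that agree with $\log|Df|$ outside $\bigcup_j I_j$ and satisfy $\psi_\varepsilon^- = \psi_\varepsilon^+$ on the complement, with $\|\psi_\varepsilon^+ - \psi_\varepsilon^-\|_\infty \le 2\,\mathrm{osc}(\log|Df|) =: 2C < \infty$ globally and $\psi_\varepsilon^+ - \psi_\varepsilon^-$ supported in $\bigcup_j I_j$. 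The difficulty is that a single invariant measure $\mu$ could a priori give large mass to the break points. To rule this out, note that by the topological conjugacy from \cite{CM86} to $g(x) = \deg(f)x \bmod 1$, every point of $\Sc^1$ is non-periodic or lies on a periodic orbit; in either case the forward orbit of a break point visits any fixed neighbourhood only with controlled frequency, and more simply: the break points that are not fixed are not periodic (they map to... ) — rather than argue orbit-combinatorially, it is cleaner to use that any invariant measure $\mu$ is a weak-$*$ limit of ergodic ones, and to observe that if $\mu(\{p\}) > 0$ for a break point $p$ then $p$ is periodic, hence a fixed point by the structure of $f$ on the arcs, hence a point where $Df$ is defined and $\log|Df|$ is continuous. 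Consequently, shrinking the $I_j$ around the finitely many non-fixed break points, we get $\mu\big(\bigcup_j I_j\big)$ small uniformly... which is false without more care. The honest resolution: since $\log|Df|$ is upper and lower semicontinuous up to $\pm C$ on the bad set of measure $\to 0$, use the Portmanteau-type estimate
\begin{equation*}
\limsup_{n} \int \log|Df|\, d\mu_n \le \int \psi_\varepsilon^+\, d\mu + C \cdot \limsup_n \mu_n\Big(\overline{\textstyle\bigcup_j I_j}\Big),
\end{equation*}
and control $\limsup_n \mu_n(\overline{\bigcup_j I_j})$ by choosing the $I_j$ so that their boundaries are $\mu$-null and are not in the (at most countable) set of atoms, then let $\varepsilon \to 0$. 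I expect this last point — showing the contribution of neighbourhoods of break points to $\int \log|Df|\,d\mu$ is small uniformly along a convergent sequence — to be the real obstacle, and the right tool is that atoms of invariant measures at break points only occur at fixed points (where continuity holds), so the truly discontinuous break points carry no atoms and their small neighbourhoods can be chosen with $\mu$-null boundary.

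Assembling: given $\mu_n \to \mu$ weak-$*$, the estimate above together with the symmetric lower bound using $\psi_\varepsilon^-$ gives $\int \psi_\varepsilon^-\, d\mu - o(1) \le \liminf_n \int \log|Df|\, d\mu_n \le \limsup_n \int \log|Df|\, d\mu_n \le \int \psi_\varepsilon^+\, d\mu + o(1)$, and since $\psi_\varepsilon^\pm \to \log|Df|$ pointwise and boundedly as $\varepsilon \to 0$ (dominated convergence against $\mu$), both outer terms converge to $\int \log|Df|\, d\mu$. Hence $\int \log|Df|\, d\mu_n \to \int \log|Df|\, d\mu$, which is the claim. An alternative, shorter route worth mentioning is to invoke that $\log|Df| \in BV(\Sc^1) \subset E$ and that $\beta$-continuity of integration against $BV$ functions with respect to weak-$*$ convergence fails in general but holds at measures giving no mass to the (countable) discontinuity set — again reducing to the atom-at-break-point discussion above.
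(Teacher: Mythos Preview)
Your proposal is correct and follows essentially the same route as the paper: show that the finite discontinuity set of $\log|Df|$ carries no invariant mass, then approximate by continuous functions and pass to the limit. The paper's argument for the no-atom step is slightly more direct---it observes that each discontinuity point $x_i$ is strictly pre-periodic, so the preimage sets $f^{-n}(\{x_i\})$ are pairwise disjoint and invariance forces $\sum_n \mu(\{x_i\})\le 1$, hence $\mu(\{x_i\})=0$---whereas you argue ``atom $\Rightarrow$ periodic $\Rightarrow$ fixed $\Rightarrow$ continuity point,'' which rests on the same structural fact in contrapositive form; your Portmanteau/null-boundary handling of the approximation is, if anything, more careful than the paper's Tietze-extension estimate.
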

  \begin{proof}
    In fact, as $Df$ is a function of bounded variation and $f$ does not have critical points, $\log|Df|$ is bounded, in particular $|\log|Df||\leq K$. By assumption, $Df$ has at most a finite number of discontinuity points $D:=\{x_1,...,x_k\}$. We must show that for any invariant probability $\mu\in \mathcal{M}_{1}(f)$ we have $\mu(\{x_i\})=0$ for each discontinuity point $x_i$.  We claim that for any $m>n\in\mathbb{N}$, $f^{-n}(x_i)\cap f^{-m}(x_i)=\emptyset$. Otherwise, let $x\in f^{-n}(x_i)\cap f^{-m}(x_i)$ be, then we would have $f^{m-n}(x_i)=x_i$ which means that $x_i$ would be a periodic point. However, it follows from Remark \ref{preperiodic}  that every $x_i$ is a pre-periodic point and cannot be a periodic point. From that, we conclude by the invariance of $\mu$ that
    $$\mu(\bigcup\limits_{n\in \mathbb{N}}f^{-n}(x_i))=\sum\limits_{n=0}^{\infty}\mu(f^{-n}(x_i))=\sum\limits_{n=0}^{\infty}\mu(x_i).$$
So, $\mu(x_i)=0$ for every invariant probability measure $\mu$.
    Given $\delta>0$, take the closed subspace $A_{\delta}=[0,1]-\bigcup_{x_i}B(x_i,\delta)$. By Tietze's extension theorem, let $\phi_{\delta}:[0,1]\rightarrow \mathbb{R}$ be a continuous extension of $\log|Df|_{|A_{\delta}}$. Take any sequence $\mathcal{M}_1(f)\ni\mu_n\xrightarrow{w^{*}}\mu$, then by definition we have:
    $$\int \phi_{\delta} d\mu_n\rightarrow \int \phi_{\delta} d\mu.$$
    Now, for $\epsilon>0$ let $n_0\in \mathbb{N}$ be and $\delta>0$ be sufficiently small such that:
    $$|\int \phi_{\delta} d\mu_n-\int \phi_{\delta} d\mu|<\epsilon/3,\; \forall n\geq n_0$$
    and
    $$\sup\limits_{x\in [0,1]\setminus D}|\phi_{\delta}(x)-\log|Df(x)||<\epsilon/3,$$
    which means that (taking the integrals in $[0,1] \setminus D$)
    $$|\int \phi_{\delta} d\mu -\int \log|Df|d\mu|<\epsilon/3$$
    and
    $$|\int \phi_{\delta} d\mu_n -\int \log|Df|d\mu_n|<\epsilon/3.$$
\noindent Thus,
    $$|\int \log|Df|d\mu_n-\int\log|Df|d\mu|\leq |\int\log|Df|d\mu_n-\int \phi_{\delta} d\mu_n|+$$ \\
    $$|\int \phi_{\delta} d\mu_n-\int\phi_{\delta}d\mu|+|\int\phi_{\delta}d\mu-\int \log|Df|d\mu|<\epsilon.$$
    As $\epsilon$ is arbitrary, we have the result.
    %Also, by the fact that $Df$ has at most a finite number of discontinuity points, for all $\mu\in \mathcal{M}_1(f)$  $\log|Df|$ is integrable: if $\mathcal{M}_1(f)\mu_n\xrightarrow{weak^{*}}\mu$ and given $\epsilon>0$, let $n_0\in \mathbb{N}$ such that $\Tilde{d}(\mu_n, \mu)<\frac{\epsilon}{K}$. Then
    %$$\Big|\int\log|Df| d\mu_n-\int\log|Df| d\mu\Big|<K\Big|\int d\mu_n-\int\ d\mu\Big|<\epsilon$$
    %for all $n\geq n_0$.
\end{proof}

Now, we prove that the our dynamic has no negative Lyapunov exponents.

\begin{proposition}
If $\mu \in \mathcal{M}_{1}(f)$ then $\lambda(\mu) \geq 0$.
\end{proposition}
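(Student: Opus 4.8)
The plan is to argue by contradiction, using that $f$ is topologically conjugate to the expanding map $g(x)=\deg(f)x\bmod 1$; in particular $f$ is \emph{expansive}, with some constant $\varepsilon_{0}>0$, since expansiveness is a conjugacy invariant and $g$ is expansive.

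First I would reduce to the ergodic case. If some $\mu\in\mathcal{M}_{1}(f)$ had $\lambda(\mu)<0$, then, since $\nu\mapsto\int\log|Df|\,d\nu$ is affine and finite ($\log|Df|$ being bounded), the ergodic decomposition of $\mu$ would contain an ergodic component $\nu$ with $\lambda(\nu)<0$; so assume $\mu$ is ergodic. By the argument in the proof of Lemma~\ref{expconti}, $\mu$ gives zero mass to each break point, and, since the break points are only pre-periodic, by invariance $\mu$ gives zero mass to $\bigcup_{j\geq 0}f^{-j}(D)$, where $D$ is the finite set of break points. Hence, by Birkhoff's Ergodic Theorem and Poincar\'e recurrence, there is a point $p\in\Sc^{1}$ such that: (i) the forward orbit of $p$ misses $D$, so $Df^{n}(p)=\prod_{j=0}^{n-1}Df(f^{j}(p))$ is well defined and $\tfrac1n\log|Df^{n}(p)|\to\lambda(\mu)<0$; and (ii) $p$ is recurrent, $f^{n_{k}}(p)\to p$ for some $n_{k}\to\infty$.

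The key step is to produce a nondegenerate interval $I_{0}\ni p$ with $\diam f^{n}(I_{0})\to 0$. The idea is the one-dimensional argument of Ma\~n\'e, adapted to the present setting: for a return time $n_{k}$, let $W_{k}$ be the maximal interval around $p$ on which $f^{n_{k}}$ is injective (break points affect only smoothness, not injectivity), so $f^{n_{k}}|_{W_{k}}$ is a homeomorphism onto an interval; then, since $\log|Df|$ is bounded, continuous off $D$, and of bounded variation (or H\"older), a distortion estimate along the orbit of $p$ shows that a sub-interval of $W_{k}$ around $p$ of controlled size is mapped by $f^{n_{k}}$ onto an interval of length comparable to $|Df^{n_{k}}(p)|$ times that size, hence exponentially smaller. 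Nesting the successive returns (using the recurrence of $p$ to bring each image back around $p$) produces a decreasing family of intervals about $p$ whose iterates contract, and a small enough sub-interval $I_{0}$ then satisfies $\diam f^{n}(I_{0})\to 0$. Finally shrink $I_{0}$ once more so that $L^{N_{0}}\diam(I_{0})<\varepsilon_{0}$, where $L=\sup|Df|$ and $N_{0}$ is chosen with $\diam f^{n}(I_{0})<\varepsilon_{0}$ for all $n\geq N_{0}$; then $\diam f^{n}(I_{0})<\varepsilon_{0}$ for every $n\geq 0$.

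Since $I_{0}$ is nondegenerate, it contains distinct points $x\neq y$, and $\dist(f^{n}(x),f^{n}(y))\leq\diam f^{n}(I_{0})<\varepsilon_{0}$ for all $n\geq 0$, contradicting expansiveness of $f$. Therefore $\lambda(\mu)\geq 0$ for every $\mu\in\mathcal{M}_{1}(f)$. I expect the distortion estimate in the key step to be the main obstacle, precisely because the forward orbit of $p$ may approach the break points arbitrarily closely, and because $f$ need not be uniformly expanding (e.g. when $f$ is Maneville--Poumeau-like); one handles this by exploiting that a $\mu$-generic orbit approaches $D$ at a sub-exponential rate, which is dominated by the exponential contraction supplied by $\lambda(\mu)<0$, together with the regularity of $\log|Df|$ away from $D$.
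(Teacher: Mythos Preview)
Your approach has a genuine gap at exactly the place you flag: the distortion estimate needed to pass from $|Df^{n}(p)|\to 0$ to $\diam f^{n}(I_{0})\to 0$ for a \emph{fixed} nondegenerate interval $I_{0}$ is not proved, and the heuristic you offer (sub-exponential approach of a generic orbit to $D$, dominated by the exponential contraction) does not by itself control the variation of $|Df|$ along small intervals whose iterates may land arbitrarily close to break points. Since $Df$ only has one-sided limits at the $x_{i}$, the distortion of $f^{n}$ on $I_{0}$ is governed by how the successive images $f^{j}(I_{0})$ straddle (or approach) the break points, not just by how $f^{j}(p)$ does; turning this into a rigorous nesting argument would require substantial additional work (e.g.\ a Koebe-type principle adapted to the piecewise-$C^{1}$ setting) that is neither standard in this generality nor supplied here.

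The paper bypasses all of this with a two-line argument. Because $f$ has the periodic specification property, Sigmund's theorem gives that periodic measures are weak-$*$ dense in $\mathcal{M}_{1}(f)$; together with the continuity of $\mu\mapsto\lambda(\mu)$ established in Lemma~\ref{expconti}, a hypothetical $\mu$ with $\lambda(\mu)<0$ yields a periodic measure $\nu=\tfrac{1}{k}\sum_{i=0}^{k-1}\delta_{f^{i}(p)}$ with $\lambda(\nu)=\tfrac{1}{k}\log|Df^{k}(p)|<0$. Then $p$ is an attracting periodic point, contradicting transitivity. This replaces your delicate local analysis around a recurrent point by a global approximation that lands directly on a periodic orbit, where ``negative exponent $\Rightarrow$ sink'' is immediate.
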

\begin{proof}
Suppose there is $\mu \in \mathcal{M}_{1}(f)$ with $\lambda(\mu) = \int \log |Df| d\mu< 0$. Since  $f$ has periodic specification property, applying \cite{S74}, there exists $\nu = \frac{1}{k}\sum_{i=0}^{k-1}\delta_{f^{i}(p)} \in \mathcal{M}_{1}(f)$ a periodic measure such that $0> \lambda(\nu) = \frac{1}{k}
\log|Df^{k}(p)|$. In particular, $p$ will be a periodic attractor orbit, contradicting the fact that $f$ is transitive.
\end{proof}

%In our context, following \cite{PV22}, given a continuous potential $\phi : \Sc^{1} \rightarrow \R$, we say that $\phi$ is expanding if $$ \sup_{\lambda(\mu) = 0}\{h_{\mu}(f) + \int \phi d\mu\} < \sup_{\lambda(\mu) >0}\{h_{\mu}(f) + \int \phi d\mu\}.$$

In our context, following \cite{PV22}:

\begin{definition} 
Given a continuous potential $\phi : \Sc^{1} \rightarrow \R$, we say that $\phi$ is expanding if $$ \sup\Big\{h_{\mu}(f) + \int \phi d\mu : \mu \in \mathcal{M}_{1}(f) \text{ and } \lambda(\mu) = 0\Big\} < P_{top}(f, \phi).$$
\end{definition}

\begin{remark}
Note that $\phi$ is expanding if and only if there is no equilibrium state $\mu \in \mathcal{M}_{1}(T)$ with respect to $\phi$ such that $\lambda(\mu) = 0$. 
\end{remark}

In our context, due to inequality $h_{\mu}(f) \leq \max\{0, \lambda(\mu) \}$, we have that every hyperbolic potential is an expanding potential.

For certain multimodal or rational maps, a potential is hyperbolic if and only if it is expanding (see  \cite{IR12,HR14}). We will prove that this result also holds in our context, for regular potentials.

\begin{remark}
 It is worth noting that the equivalence between hyperbolic potential and expanding potential is not obvious, even when using Pesin's formula. It could be that an equilibrium state has zero unstable
Hausdorff dimension, thus zero entropy.
\end{remark}

We observe that, in our context, $f$ is strongly transitive; thus, the transfer operator is quasi-compact if and only if it has the spectral gap property. 
The proof is analogous to the respective result in \cite[Proposition 5.9]{BC21}:

\begin{lemma}\label{LemaEss}
Let $\phi $ be a continuous potential. If $\Lo_{f,\phi}|_E$ is quasi-compact, then it  has the spectral gap property.
\end{lemma}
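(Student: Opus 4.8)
The plan is to run the classical Perron--Frobenius spectral argument for the operator $\Lo:=\Lo_{f,\phi}|_E$, exploiting that $f$ is topologically exact and strongly transitive. Set $\rho:=\rho(\Lo)$. Since $\Lo$ is quasi-compact we have $\rho_{ess}(\Lo)<\rho$, and the part of $\operatorname{sp}(\Lo)$ lying outside $\ov{B(0,\rho_{ess}(\Lo))}$ consists of isolated eigenvalues of finite algebraic multiplicity that can accumulate only on the circle $|z|=\rho_{ess}(\Lo)$; in particular the peripheral spectrum $\Gamma:=\{\lambda\in\operatorname{sp}(\Lo):|\lambda|=\rho\}$ is finite, each $\lambda\in\Gamma$ is a pole of the resolvent of finite order, and $\lambda_0:=\sup\{|\lambda|:\lambda\in\operatorname{sp}(\Lo),\ |\lambda|<\rho\}<\rho$. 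Hence it suffices to prove that $\Gamma=\{\rho\}$, that the $\rho$-eigenspace of $\Lo$ is one-dimensional, and that $\rho$ is a simple pole; these facts yield exactly the decomposition $\operatorname{sp}(\Lo)=\{\rho\}\cup\Sigma_1$ with $\Sigma_1\subset\{|z|\le\lambda_0\}$ demanded by the definition of the spectral gap.

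First I would produce a strictly positive eigenfunction. Recalling that $\rho=\rho(\Lo_{f,\phi}|_E)$ coincides with $\rho(\Lo_{f,\phi}|_{C^0})=e^{P_{top}(f,\phi)}$, which is an eigenvalue of $(\Lo_{f,\phi}|_{C^0})^*$ with eigenvector the conformal measure $\nu_\phi$, positivity of $\Lo$ together with the fact that $\rho$ is an isolated pole of its resolvent gives (by a Krein--Rutman type argument at the pole) a nonnegative eigenfunction $h\in E$, $h\not\equiv 0$, with $\Lo h=\rho h$. Strong transitivity upgrades $h$ to be strictly positive: for $n$ large one has $f^n(\{h>0\})=\Sc^1$, so every $x$ has a preimage $y\in f^{-n}(x)$ with $h(y)>0$, and then $\rho^n h(x)=\Lo^n h(x)\ge e^{S_n\phi(y)}h(y)>0$, where $S_n\phi(y):=\sum_{j=0}^{n-1}\phi(f^j(y))$. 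I would then pass to the normalized (Markov) operator $\widehat{\Lo}g:=\rho^{-1}h^{-1}\,\Lo(hg)$, which is again a transfer operator with positive weight, satisfies $\widehat{\Lo}\um=\um$, has spectrum equal to $\rho^{-1}\operatorname{sp}(\Lo)$, and admits a probability $\widehat\nu$ of full support with $\widehat{\Lo}^*\widehat\nu=\widehat\nu$ (the conformal measure, now with eigenvalue $\rho(\widehat{\Lo}|_{C^0})=1$, with full support coming from topological exactness of $f$).

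Next I would establish the remaining three points for $\widehat{\Lo}$. If $\widehat{\Lo}g=g$ with $g$ real-valued, then at a point $x_0$ where $g$ attains its supremum $M$ the identity $g(x_0)=\widehat{\Lo}g(x_0)$ exhibits $M$ as a weighted average (weights summing to $1$) of the values of $g$ on $f^{-1}(x_0)$, forcing $g\equiv M$ on $f^{-1}(x_0)$ and, inductively, on $\bigcup_n f^{-n}(x_0)$, which is dense by strong transitivity; hence $g$ is constant, so $\ker(\Lo-\rho I)=\C\,h$. If moreover $(\widehat{\Lo}-I)^2 g=0$, then $(\widehat{\Lo}-I)g$ is a constant multiple of $\um$, and integrating against $\widehat\nu$ forces that constant to vanish; thus $\rho$ is a simple pole. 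Finally, if $\Lo\psi=\lambda\psi$ with $|\lambda|=\rho$, put $u=\psi/h$ and $\gamma=\lambda/\rho$, so $|\gamma|=1$ and $\widehat{\Lo}u=\gamma u$; then $\widehat{\Lo}|u|\ge|\widehat{\Lo}u|=|u|$ and integration against $\widehat\nu$ gives $\widehat{\Lo}|u|=|u|$, whence $|u|$ is constant by the maximum-principle step. Equality in the triangle inequality applied to $\sum_{f(y)=x}\widehat w(y)u(y)=\gamma u(x)$ (weights $\widehat w(y)>0$ summing to $1$) then forces $u(y)=\gamma u(x)$ for every $y\in f^{-1}(x)$, i.e. $u\circ f=\gamma^{-1}u$, hence $u\circ f^n=\gamma^{-n}u$ and $u$ is constant on each finite set $f^{-n}(x)$; since $f$ is topologically exact these preimage sets become uniformly dense as $n\to\infty$, which forces $u$ to be constant and therefore $\gamma=1$, i.e. $\lambda=\rho$. (For $E=BV(\Sc^1)$ one replaces the last continuity argument by measure-theoretic exactness of $f$ with respect to $\widehat\nu$, which forbids Koopman eigenvalues on the unit circle other than $1$.)

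The step I expect to be the main obstacle is the first one: certifying that $\rho=\rho(\Lo|_E)$ itself --- and not merely some point of modulus $\rho$ --- is an eigenvalue of $\Lo|_E$ with an eigenfunction genuinely belonging to $E$, together with the regularity bookkeeping when $E=BV(\Sc^1)$ or $E=C^r(\Sc^1,\C)$ (that $h^{-1}$, $|u|$ and the conjugated weight stay in the right class, and that the ``equal $\widehat\nu$-almost everywhere $\Rightarrow$ equal everywhere'' reductions are legitimate, using that $\widehat\nu$ has full support). This is precisely where strong transitivity, topological exactness and the identification $\rho(\Lo_{f,\phi}|_E)=e^{P_{top}(f,\phi)}$ enter, as in the corresponding argument of \cite{BC21}.
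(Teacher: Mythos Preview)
The paper does not actually supply a proof of this lemma; it merely records that strong transitivity of $f$ is the decisive hypothesis and refers the reader to the analogous argument in \cite{BC21}. Your outline is exactly that standard Perron--Frobenius route --- quasi-compactness isolates a finite peripheral spectrum, positivity plus strong transitivity yield a strictly positive leading eigenfunction, normalization to a Markov operator followed by a maximum-principle and equality-in-the-triangle-inequality argument kills Jordan blocks and rules out peripheral eigenvalues other than $\rho$ --- and you have correctly identified the regularity bookkeeping (membership of $h^{-1}$ and $|u|$ in $E$, the identity $\rho(\Lo|_E)=\rho(\Lo|_{C^0})$, and the full-support/a.e.\ passage for $E=BV(\Sc^1)$) as the only places requiring genuine care; this matches the paper's intended approach.
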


The following result will connect the expanding/hyperbolic property of the potential with the spectral gap property. The proof is inspired by the work \cite{BC21}.

\begin{proposition}\label{propexp}
Let $\phi \in E$ be a continuous potential. Suppose that one of the two items occurs:
\begin{enumerate}
    \item $\phi$ is expanding and $E = C^{\alpha}(\Sc^{1} , \C)$ or $C^{r}(\Sc^{1} , \C)$;
    \item $\phi$ is hyperbolic and $E = BV(\Sc^{1})$.
\end{enumerate}
Then $\mathcal{L}_{f , \phi}$ has the spectral gap property, acting on $E$.
\end{proposition}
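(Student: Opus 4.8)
The plan is to establish spectral gap by proving quasi-compactness of $\mathcal{L}_{f,\phi}$ acting on $E$, and then invoking Lemma~\ref{LemaEss} to upgrade quasi-compactness to the spectral gap property. The route to quasi-compactness is the standard Lasota--Yorke / Doeblin--Fortet strategy: one establishes an inequality of the form
$$
\|\mathcal{L}_{f,\phi}^{n} g\|_{E} \leq A\,\theta^{n}\,\|g\|_{E} + B_n\,\|g\|_{C^{0}}
$$
for suitable constants with $\theta < \rho(\mathcal{L}_{f,\phi})$, together with the fact that the inclusion $E \hookrightarrow C^{0}(\Sc^{1},\C)$ is compact (true for $C^{\alpha}$, $C^{r}$, and $BV$); by the Hennion--Ionescu-Tulcea--Marinescu theorem this yields $\rho_{ess}(\mathcal{L}_{f,\phi}) \leq \theta < \rho(\mathcal{L}_{f,\phi})$, hence quasi-compactness. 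The whole argument is to be run, following \cite{BC21}, for the normalized operator: first one passes to the conformal measure $\nu_\phi$ with $(\mathcal{L}_{f,\phi})^{*}\nu_\phi = \rho\,\nu_\phi$ (whose existence was recalled in Subsection~\ref{sec:C}), sets $\rho = \rho(\mathcal{L}_{f,\phi}|_{C^0})$, and works with $\widetilde{\mathcal{L}} := \rho^{-1}\mathcal{L}_{f,\phi}$, which preserves $\nu_\phi$-mass.

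The first key step is to identify $\rho(\mathcal{L}_{f,\phi}|_{C^{0}})$ with the pressure: since $f$ is topologically conjugate to $g(x)=\deg(f)x \bmod 1$ and has the specification property, one has $\rho(\mathcal{L}_{f,\phi}|_{C^{0}}) = e^{P_{top}(f,\phi)}$, and moreover the variational principle identifies $\log\rho$ with $\sup_\mu\{h_\mu(f)+\int\phi\,d\mu\}$. The second, and genuinely substantive, step is to obtain the contraction constant $\theta$. Here is where the expanding (resp.\ hyperbolic) hypothesis enters decisively: one must show that the "bad" part of the orbit space — the set where $|Df^{n}|$ fails to grow exponentially, i.e.\ where points accumulate near the neutral behaviour — carries exponentially little weight under $\widetilde{\mathcal L}^{n}$. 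Concretely, for a point $x$ and a branch $y$ of $f^{-n}(x)$, the Hölder-seminorm estimate of $\widetilde{\mathcal{L}}^{n}g$ splits into (i) a term controlled by $\sum_{f^{n}(y)=x} e^{S_n\phi(y)-nP}|Df^{n}(y)|^{-\alpha}$ times $[g]_\alpha$, plus (ii) a term in which the distortion of $S_n\phi$ along the branch contributes, controlled by $\|g\|_{C^0}$. For (i) to be summable with a uniform geometric bound one needs precisely that no invariant measure with $\lambda(\mu)=0$ is an equilibrium state; the expanding hypothesis, via an approximation/large-deviations argument over the specification-generated periodic measures (as used already in this section), forces the weighted sum of the "slowly expanding" branches to decay, while the genuinely expanding branches contribute the factor $|Df^{n}|^{-\alpha}$ which is itself exponentially small. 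The $BV$ case is parallel but cruder: the $BV$-seminorm of $\widetilde{\mathcal L}^n g$ is estimated branch-by-branch via the one-dimensional change-of-variables, the number of monotonicity intervals grows only polynomially (indeed linearly, given finitely many break points and the conjugacy to $g$), and the hyperbolic hypothesis — stronger than expanding, and needed because $BV$ does not see Hölder regularity of $Df$ — guarantees that $\sum e^{S_n\phi(y)-nP}$ over the relevant branches decays, giving the Lasota--Yorke inequality directly.

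The main obstacle I anticipate is exactly step two: quantifying the decay of the weighted contribution of branches along which $|Df^{n}|$ grows too slowly. This cannot be done branch-by-branch in a naive way because near the (pre)periodic break points or a neutral periodic orbit the distortion bounds degrade; the fix is to fix a small neighbourhood $U$ of the non-expanding region, split each inverse-branch word into maximal sub-words staying in $U$ versus leaving $U$, use bounded distortion outside $U$, and estimate the $U$-sub-word contribution by comparing with the maximizing/equilibrium problem — precisely the content of the expanding (resp.\ hyperbolic) hypothesis, which says this contribution is a definite amount below $\rho^{n}$. Two further routine-but-necessary points: one must separately check that $\rho(\widetilde{\mathcal L}|_{E}) = 1$ (the lower bound $\geq 1$ is immediate from testing on the constant function $\um$ and positivity; the upper bound follows from the $C^0$-spectral radius identification since $\|\cdot\|_{C^0}\leq \|\cdot\|_{E}$ and the iterates of $\um$ stay bounded in $E$, using distortion control), so that the $\theta<1=\rho$ gap is strict; and one must invoke the already-observed fact that in this setting strong transitivity makes quasi-compactness equivalent to spectral gap, so that Lemma~\ref{LemaEss} closes the argument with a leading simple eigenvalue $\rho$ and a one-dimensional eigenspace spanned by a strictly positive density.
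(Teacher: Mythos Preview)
Your overall architecture --- prove quasi-compactness, then invoke Lemma~\ref{LemaEss} --- matches the paper. But the route to quasi-compactness is quite different, and the paper's is considerably shorter.

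The paper does not build a Lasota--Yorke inequality by hand. Instead it quotes existing essential spectral radius formulas: for $E=BV$ it uses \cite{BK90,Ba00} to get $\rho_{ess}(\mathcal{L}_{f,\phi}|_{BV})\le\lim_n\|e^{S_n\phi}\|_\infty^{1/n}$ and $\rho=e^{P_{top}(f,\phi)}$; for $E=C^\alpha$ (resp.\ $C^r$) it uses \cite{BJL96} (resp.\ \cite{CL97}) to get $\rho_{ess}\le e^{P_{top}(f,\phi-k\log|Df|)}$ for an explicit $k>0$, together with $\rho=e^{P_{top}(f,\phi)}$. The hypothesis on $\phi$ is then used \emph{contrapositively} and purely variationally: if $\rho_{ess}=\rho$, then in the H\"older/$C^r$ case $P_{top}(f,\phi)=P_{top}(f,\phi-k\log|Df|)$, and any equilibrium state $\eta$ for $\phi-k\log|Df|$ (which exists by expansiveness and Lemma~\ref{expconti}) satisfies $h_\eta+\int\phi\,d\eta-k\lambda(\eta)=P_{top}(f,\phi)\ge h_\eta+\int\phi\,d\eta$, forcing $\lambda(\eta)=0$ and making $\eta$ an equilibrium state for $\phi$ --- contradicting ``expanding''. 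The $BV$ case is even simpler: if $\lim_n\|e^{S_n\phi}\|_\infty^{1/n}\ge e^{P_{top}(f,\phi)}$, an accumulation point of empirical measures along near-maximizing orbit segments is an invariant $\eta$ with $\int\phi\,d\eta\ge P_{top}(f,\phi)$, contradicting ``hyperbolic''.

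Your direct Lasota--Yorke plan would, if carried through, land on exactly the same pressure comparison: the term (i) you write, $\sum_{f^n(y)=x}e^{S_n\phi(y)-nP}|Df^n(y)|^{-\alpha}$, has exponential rate $P_{top}(f,\phi-\alpha\log|Df|)-P_{top}(f,\phi)$, so the whole question is whether this is negative. You propose to show this via a subword decomposition near the neutral region, but the expanding hypothesis is a statement about \emph{equilibrium measures}, not orbit segments, and the passage from one to the other is precisely what the cited essential spectral radius formulas already encode. The paper's contrapositive variational argument is a three-line replacement for your subword combinatorics; what it buys is that you never have to control distortion near the non-expanding set at all. Your approach is not wrong, but it rederives machinery that the paper simply cites, and the step you flag as the main obstacle is the step the paper avoids entirely.
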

\begin{proof}

\

\emph{Case 1: $\phi$ is expanding and $E = C^{\alpha}(\Sc^{1} , \C)$ or $C^{r}(\Sc^{1} , \C) $.}

\

\noindent On the one hand, if $E = C^{\alpha}(\Sc^{1} , \C) $,  applying \cite[Theorem 1 and 2]{BJL96} we have that:
$$
\rho_{ess}(\mathcal{L}_{f,\phi|C^{\alpha}}) = \lim\sqrt[n]{||\mathcal{L}_{f,\phi - \alpha \log |Df|}^{n}1||_{\infty}} \text{ and }
$$
$$
\rho(\mathcal{L}_{f,\phi|C^{\alpha}}) = \max\Big\{ \rho_{ess}(\mathcal{L}_{f,\phi|C^{\alpha}}), \lim\sqrt[n]{||\mathcal{L}_{f,\phi}^{n}1||_{\infty}}\Big\}.
$$
Note that, 
%by \cite[Theorem 3]{BK90}
in our context,
$$\lim\sqrt[n]{||\mathcal{L}_{f,\phi - \alpha \log |Df|}^{n}1||_{\infty}} = e^{P_{top}(f ,\phi - \alpha\log|Df|)} \text{ and }
$$
$$
\lim\sqrt[n]{||\mathcal{L}_{f,\phi}^{n}1||_{\infty}} = e^{P_{top}(f ,\phi)}.
$$

On the other hand, if $E = C^{r}(\Sc^{1} ,\C)$, applying \cite{CL97} we have that:
$$
\rho_{ess}(\mathcal{L}_{f,\phi|C^{r}}) \leq e^{P_{top}(f , \phi - r\log|Df|)} \text{ and }
\rho(\mathcal{L}_{f,\phi|C^{r}}) \leq  e^{P_{top}(f,\phi)}.
$$
Since there is $\nu_{\phi}$ conformal measure then $\rho(\mathcal{L}_{f,\phi|C^{0}}) \leq \rho(\mathcal{L}_{f,\phi|C^{r}})$, thus $\rho(\mathcal{L}_{f,\phi|C^{0}}) = \rho(\mathcal{L}_{f,\phi|C^{r}})$.

In both cases, there exists $k > 0$ such that:
 $$
\rho_{ess}(\mathcal{L}_{f,\phi|E}) \leq e^{P_{top}(f , \phi - k\log|Df|)} \text{ and }
\rho(\mathcal{L}_{f,\phi|E}) =  e^{P_{top}(f,\phi)}.
$$
Suppose, by contradiction, that $\mathcal{L}_{f,\phi|E}$ has no spectral gap property. By Lemma \ref{LemaEss}   we have 
$$
\rho_{ess}(\mathcal{L}_{f,\phi|E}) = \rho(\mathcal{L}_{f,\phi|E}) \Rightarrow P_{top}(f , \phi) = P_{top}(f , \phi - k\log |Df|).
$$
Since $f$ is expansive and $\mathcal{M}_{1}(f) \ni \mu \mapsto \lambda(\mu)$ is continuous, there is an equilibrium state $\eta$ with respect a $\phi - k\log|Df|$. Note that for all $f$-invariant probability $\mu$:
$$
h_{\eta}(f) + \int [\phi - k\log |Df|]d\eta = P_{top}(f ,\phi - k\log |Df|) = P_{top}(f , \phi ) \geq h_{\mu}(f) + \int \phi  d\mu.
$$
In particular, $\lambda(\eta) = 0$ and $\eta$ is an equilibrium state with respect to $\phi$. In other words, $\phi $ is not expanding, which is absurd by hypothesis. We conclude then that $\mathcal{L}_{f,\phi|E}$ has the spectral gap property.\\

\emph{Case 2: $\phi$ is hyperbolic and $E = BV(\Sc^{1})$.}

\

\noindent Applying \cite[Theorem 3]{BK90} and \cite[Theorem 3.2]{Ba00} we have that:
$$
\rho(\mathcal{L}_{f,\phi|E}) = \rho(\mathcal{L}_{f,\phi|C^{0}}) = e^{P_{top}(f,\phi)} \text{ and }
\rho_{ess}(\mathcal{L}_{f,\phi|E}) \leq \lim\sqrt[n]{||e^{S_{n}\phi}||_{\infty}}.
$$

%Suppose that $\phi$ is hyperbolic.\\

Let us show that $\lim\sqrt[n]{||e^{S_{n}\phi}||_{\infty}} < e^{P_{top}(f,\phi)}$.
In fact; if  $\lim\sqrt[n]{||e^{S_{n}\phi}||_{\infty}} \geq e^{P_{top}(f,\phi)}$ then given $\epsilon > 0 $ there is $n_{0} \in \N$ such that $\forall n > n_{0}$ there exists $x_{n} \in \Sc^{1}$ with:
$$
\frac{1}{n}\log e^{S_{n}\phi(x_{n})} \geq P_{top}(f , \phi) - \epsilon \Rightarrow \frac{1}{n}S_{n}\phi(x_{n}) \geq P_{top}(f , \phi) - \epsilon.
$$
Take the probability $\ds\eta = \lim_{k \mapsto +\infty}\frac{1}{n_{k}}\sum_{i =0}^{n_{k}-1}\delta_{f^{i}(x_{n_{k}})}$. Note that $\eta$ is an $f$-invariant probability and
$$
\int \phi d\eta\geq P_{top}(f,\phi) - \epsilon \Rightarrow \max_{\mu \mathcal{M}_{1}(f)}\int \phi d\mu = P_{top}(f , \phi).
$$
In other words, $\phi$ is not hyperbolic, which is absurd by hypothesis.

We conclude then that $\rho(\mathcal{L}_{f,\phi|E}) > \rho_{ess}(\mathcal{L}_{f,\phi|E})$, applying the Lemma \ref{LemaEss} we have that $\mathcal{L}_{f,\phi|E}$ has the spectral gap property. 
\end{proof}

\

Since  $f$ is strongly transitive, the following result has the same proof as the respective results in \cite{BC21}:

\begin{lemma}\label{medida}
Let $\phi \in E$ be a continuous  potential. If $\mathcal{L}_{f, \phi}|_{E}$ has the spectral gap property then there exists a unique probability $\nu_{\phi}$ and $h_{\phi} \in E$ such that $ (\mathcal{L}_{f, \phi}|_{E})^{\ast}\nu_{\phi} = \rho(\mathcal{L}_{f, \phi}|_{E})\nu_{\phi}$, $\Lo_{f,\phi}h_{\phi}=\rho(\Lo_{f,\phi}|_{E}) h_{\phi}$, $h_{\phi} > 0$ and $\int h_{\phi} d\nu_{\phi} = 1$. Moreover, $supp(\nu_{\phi}) = \Sc^{1}$.
\end{lemma}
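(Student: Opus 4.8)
The statement to prove (Lemma~\ref{medida}) asserts that when $\Lo_{f,\phi}|_E$ has the spectral gap property, there is a unique conformal probability $\nu_\phi$ and a unique normalized positive eigenfunction $h_\phi\in E$, and $\supp(\nu_\phi)=\Sc^1$. The plan is to exploit the spectral decomposition $\operatorname{sp}(\Lo_{f,\phi}|_E)=\{\lambda_1\}\cup\Sigma_1$ together with positivity of the operator and the fact (established earlier in the excerpt via Mazur's Separation Theorem) that $\rho(\Lo_{f,\phi}|_{C^0})$ is an eigenvalue of the dual operator, hence at least one conformal measure $\nu_\phi$ exists. The key soft input is strong transitivity of $f$ (noted in Section~\ref{sec:set}), which forces the eigenfunctions to be strictly positive and the conformal measure to have full support.

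\textbf{Step 1: produce the eigendata from the spectral gap.} From the spectral gap decomposition, the leading eigenvalue $\lambda_1=\rho(\Lo_{f,\phi}|_E)$ has a one-dimensional eigenspace, say spanned by $h_\phi\in E$; the spectral projection onto this line is $\Pi(g)=\frac{\int g\,d\nu}{\int h_\phi\,d\nu}h_\phi$ for an appropriate functional. By the conformal measure $\nu_\phi$ obtained earlier, $(\Lo_{f,\phi}|_{C^0})^*\nu_\phi=\rho(\Lo_{f,\phi}|_{C^0})\nu_\phi$, and one checks $\rho(\Lo_{f,\phi}|_{C^0})=\rho(\Lo_{f,\phi}|_E)=\lambda_1$ (this equality of spectral radii is available from the essential-spectral-radius estimates used in Proposition~\ref{propexp}, or directly since $E$ is dense in $C^0$ and $\Lo_{f,\phi}$ is positive). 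Then $\nu_\phi$ is the natural candidate for the dual eigenvector, and $\int h_\phi\,d\nu_\phi\neq 0$ after a normalization choice, so we may rescale so that $\int h_\phi\,d\nu_\phi=1$.

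\textbf{Step 2: positivity of $h_\phi$ and full support of $\nu_\phi$.} Write $h_\phi=\Pi(\um)/c$ for a suitable constant; since $\Lo_{f,\phi}$ is a positive operator and $\um\geq 0$, the Cesàro averages $\frac1N\sum_{n<N}\lambda_1^{-n}\Lo_{f,\phi}^n\um$ converge to a nonnegative limit proportional to $h_\phi$, so $h_\phi\geq 0$. To upgrade to $h_\phi>0$: the eigenfunction equation gives $\lambda_1^n h_\phi(x)=\sum_{f^n(y)=x}e^{S_n\phi(y)}h_\phi(y)$; if $h_\phi(x_0)=0$ then $h_\phi$ vanishes on $f^{-n}(x_0)$ for all $n$, and strong transitivity ($\bigcup_n f^{-n}(x_0)$ dense, or even equal to $\Sc^1$ for some $n$) together with continuity forces $h_\phi\equiv 0$, a contradiction. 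For $\supp(\nu_\phi)=\Sc^1$: from $(\Lo_{f,\phi})^*\nu_\phi=\lambda_1\nu_\phi$ one gets the change-of-variables identity $\int \Lo_{f,\phi}(g)\,d\nu_\phi=\lambda_1\int g\,d\nu_\phi$, which shows that if an open set $U$ had $\nu_\phi(U)=0$ then $\nu_\phi(f^n(U))=0$ for all $n$; since $f$ is strongly transitive, $\bigcup_n f^n(U)=\Sc^1$ for some finite $n$ (it covers $\Sc^1$), giving $\nu_\phi(\Sc^1)=0$, absurd.

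\textbf{Step 3: uniqueness.} Uniqueness of $h_\phi$ (up to scalar) is immediate from one-dimensionality of the eigenspace in the spectral gap hypothesis; the normalization $h_\phi>0$, $\int h_\phi\,d\nu_\phi=1$ then pins it down. Uniqueness of $\nu_\phi$ among conformal probabilities at eigenvalue $\lambda_1$ follows because any such $\nu'$ satisfies $\nu'(\Pi(g))=\nu'(g)$ for all $g\in E$ by iterating and passing to the Cesàro limit, so $\nu'=\nu_\phi$ on $E$, hence on $C^0$ by density. I expect the main obstacle to be Step~2, specifically making the full-support and strict-positivity arguments fully rigorous using strong transitivity of $f$ (i.e., that for every nonempty open $U$ there is $n$ with $f^n(U)=\Sc^1$) rather than mere topological transitivity; everything else is a routine consequence of the spectral gap decomposition. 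As noted in the excerpt, since $f$ is strongly transitive this lemma has the same proof as the corresponding result in \cite{BC21}, so the argument can be cited or transcribed from there.
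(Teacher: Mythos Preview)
Your proposal is correct and follows exactly the standard argument the paper defers to: the paper gives no proof of its own here, merely noting that since $f$ is strongly transitive the result has the same proof as the corresponding one in \cite{BC21}, and your Steps 1--3 are precisely that argument. The only point worth tightening is the equality $\rho(\Lo_{f,\phi}|_{C^0})=\rho(\Lo_{f,\phi}|_E)$ in Step~1: density alone does not give it, but it follows from the spectral gap together with positivity (the spectral projection $\Pi$ is a limit of positive operators, hence positive, which forces $\Pi(\um)\neq 0$ and then $\|\Lo^n\um\|_\infty^{1/n}\to\lambda_1$).
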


\begin{remark}
Denote the $f$-invariant probability $h_{\phi}\nu_{\phi}$ by $\mu_{\phi}$. Note that $supp(\mu_{\phi}) = \Sc^{1}.$
\end{remark}

On the other hand, define $SG(E) := \{\phi \in E : \mathcal{L}_{f,\phi|E} \text{ has the spectral gap property }\}$, analogously to \cite[Corollary 4.12]{BC21} we have that:

\begin{corollary}\label{analy}
$SG(E) \subset E$ is an open subset, and the following map is analytic:
$$
SG(E) \ni \phi \mapsto \big(\rho(\mathcal{L}_{f,\phi|E}) , h_{\phi} , \nu_{\phi}) \in \R \times E \times E^{\ast}.
$$
\end{corollary}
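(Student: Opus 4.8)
The plan is to establish the two assertions of Corollary~\ref{analy} — openness of $SG(E)$ and analyticity of the spectral data map — by a standard perturbation-theoretic argument, exactly parallel to \cite[Corollary 4.12]{BC21}, leaning on the spectral gap machinery already developed in Lemmas~\ref{LemaEss} and \ref{medida}. First I would recall that the map $E \ni \phi \mapsto \Lo_{f,\phi|E} \in \mathcal{B}(E)$ (bounded linear operators on $E$) is itself real-analytic: for fixed $f$, $\Lo_{f,\phi}(g)(x) = \sum_{f(y)=x} e^{\phi(y)} g(y)$ depends on $\phi$ only through the composition with the exponential, and since $E$ is a Banach algebra (for $E = C^\alpha$, $C^r$, or $BV$) on which $\phi \mapsto e^\phi$ is analytic, and the finitely many inverse branches of $f$ act boundedly on $E$, the operator-valued map is analytic in the operator norm. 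This reduces the problem to a statement about analytic families of operators.

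Next I would invoke the classical perturbation theory of isolated eigenvalues (Kato, or the Dunford functional calculus via the resolvent). Suppose $\phi_0 \in SG(E)$. By definition $\mathrm{sp}(\Lo_{f,\phi_0|E}) = \{\lambda_1\} \cup \Sigma_1$ with $\lambda_1 = \rho(\Lo_{f,\phi_0|E})$ a simple isolated eigenvalue and $\Sigma_1$ contained in a disk of strictly smaller radius $\lambda_0 < \lambda_1$. Choose a circle $\gamma \subset \C$ separating $\lambda_1$ from $\Sigma_1$. Since $\phi \mapsto \Lo_{f,\phi|E}$ is continuous (indeed analytic), for $\phi$ near $\phi_0$ the operator $\Lo_{f,\phi|E}$ has no spectrum on $\gamma$, the spectral projection $\Pi_\phi := \frac{1}{2\pi i}\oint_\gamma (z - \Lo_{f,\phi|E})^{-1}\, dz$ is well-defined, has rank one (ranks of spectral projections are locally constant), and depends analytically on $\phi$. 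Hence for $\phi$ near $\phi_0$ the spectrum inside $\gamma$ consists of a single simple eigenvalue $\lambda(\phi)$, and the part outside stays inside a slightly larger disk of radius still $< \lambda(\phi)$; one then checks $\lambda(\phi)$ remains the leading (positive, maximal-modulus) eigenvalue — here I would use that $\Lo_{f,\phi|E}$ is a positive operator with $\rho(\Lo_{f,\phi|E}) = e^{P_{top}(f,\phi)}$ an eigenvalue (Lemma~\ref{medida} and the conformal measure discussion), together with continuity of $\rho$, to exclude that some eigenvalue in $\Sigma_1$ overtakes it. This yields that $\Lo_{f,\phi|E}$ still has the spectral gap property, proving $SG(E)$ is open, and simultaneously gives analyticity of $\phi \mapsto \lambda(\phi) = \rho(\Lo_{f,\phi|E})$, of the rank-one projection $\Pi_\phi$, and therefore — after the normalization $\Lo_{f,\phi}h_\phi = \rho(\Lo_{f,\phi|E})h_\phi$, $h_\phi > 0$, $\int h_\phi\, d\nu_\phi = 1$ from Lemma~\ref{medida} — of the eigenfunction $h_\phi$ (obtained as $\Pi_\phi \um$ suitably normalized, which is analytic since the normalization denominators are analytic and nonvanishing) and of the conformal measure $\nu_\phi$ (the eigenvector of the analytic adjoint family $(\Lo_{f,\phi|E})^\ast$ associated to $\Pi_\phi^\ast$).

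The main obstacle, and the only point requiring genuine care rather than citation, is verifying that the locally-defined simple eigenvalue $\lambda(\phi)$ remains the \emph{leading} eigenvalue in the spectral-gap sense — i.e. that the gap does not close from the side of $\Sigma_1$ as $\phi$ varies — and that positivity ($h_\phi > 0$, $\nu_\phi$ a genuine probability with full support) persists. Both follow from soft arguments: the spectral radius equals $e^{P_{top}(f,\phi)}$ and $P_{top}$ is continuous in $\phi$ (uniform norm), so $\lambda(\phi) \to \lambda_1$; meanwhile $\sup_{z \in \Sigma_1(\phi)}|z| \leq \rho_{ess}(\Lo_{f,\phi|E}) \vee (\text{radius of the disk bounded by }\gamma)$ stays bounded away from $\lambda_1$ by upper semicontinuity of the essential spectral radius in analytic families; and positivity of $h_\phi$ and $\nu_\phi$ is inherited from Lemma~\ref{medida} applied at each $\phi \in SG(E)$ with $\int h_\phi\, d\nu_\phi = 1$, the normalization being analytic and nonzero. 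Everything else is the standard Kato perturbation package, and I would simply cite \cite[Corollary 4.12]{BC21} for the details, noting that the present setting (transitive $C^1$ local diffeomorphisms with break points, $E = C^\alpha$, $C^r$ or $BV$) is covered verbatim because the only inputs used are analyticity of $\phi \mapsto \Lo_{f,\phi|E}$, the spectral gap from Proposition~\ref{propexp}, and Lemmas~\ref{LemaEss}–\ref{medida}.
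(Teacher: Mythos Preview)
Your proposal is correct and follows essentially the same approach as the paper: the paper gives no independent proof but simply records the result as analogous to \cite[Corollary~4.12]{BC21}, and your sketch is precisely the standard Kato/Dunford perturbation argument underlying that reference (analyticity of $\phi\mapsto\Lo_{f,\phi|E}$, stability of the rank-one spectral projection, and the normalizations from Lemma~\ref{medida}). In fact you have written out more detail than the paper itself provides.
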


The following result shows that if $\Lo_{f,\phi}|_{E}$ has the spectral gap property, then the thermodynamic quantities are related to the spectral quantities.

\begin{lemma}\label{Lemapress}\label{GapAnalt}
 Let $\phi \in E$ be a continuous potential. If $\phi \in SG(E)$, then:
\begin{enumerate}
    \item $P_{top}(f,\phi)=\log \rho(\Lo_{f,\phi}|_{E})$ and $\mu_{\phi}$ is the unique equilibrium state of $f$ with respect to $\phi$;
\item $\R \ni t\mapsto P(f,t\phi)$ is analytic on a neighborhood of $1$.
\item  Suppose additionally that $\phi$ is not cohomologous to a constant then $\R \ni t \mapsto P_{top}(f , t\phi) $ is strictly convex on a neighborhood of $1$. 

\end{enumerate}

\end{lemma}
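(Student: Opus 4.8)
The plan is to exploit the spectral gap decomposition of $\Lo_{f,\phi}|_E$ together with the standard dictionary between the leading eigendata and thermodynamic quantities. For item (1): writing $\lambda_\phi := \rho(\Lo_{f,\phi}|_E)$ for the leading eigenvalue, with eigenfunction $h_\phi>0$ and conformal measure $\nu_\phi$ from Lemma \ref{medida}, I would first check that $\mu_\phi = h_\phi\nu_\phi$ is an equilibrium state with $h_{\mu_\phi}(f) + \int\phi\,d\mu_\phi = \log\lambda_\phi$; this is the classical computation (change of variables reduces $f$ to the normalized transfer operator $\widetilde\Lo g = \lambda_\phi^{-1}h_\phi^{-1}\Lo_{f,\phi}(h_\phi g)$, whose fixed measure is $\mu_\phi$, and the Rokhlin formula gives the entropy). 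Combined with the variational principle \eqref{eq:VP} this forces $P_{top}(f,\phi)\ge\log\lambda_\phi$; the reverse inequality $P_{top}(f,\phi)\le \log\rho(\Lo_{f,\phi}|_{C^0})=\log\lambda_\phi$ is already recorded in the excerpt (it is used inside the proof of Proposition \ref{propexp}, via \cite{BK90,Ba00,BJL96,CL97}, where $\rho(\Lo_{f,\phi}|_E)=\rho(\Lo_{f,\phi}|_{C^0})=e^{P_{top}(f,\phi)}$). For uniqueness of the equilibrium state, I would argue that any ergodic equilibrium state $\mu$ must satisfy the Gibbs-type inequality with equality and hence, by expansiveness of $f$ and a standard entropy-density argument, coincide with $\mu_\phi$; alternatively one invokes that in this setting the conformal measure $\nu_\phi$ is the unique one and equilibrium states are absolutely continuous with respect to it.

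For item (2): by Corollary \ref{analy}, $SG(E)$ is open and the map $\psi\mapsto(\rho(\Lo_{f,\psi|E}),h_\psi,\nu_\psi)$ is analytic on $SG(E)$. Since $\phi\in SG(E)$ and $SG(E)$ is open, the affine curve $t\mapsto t\phi$ lies in $SG(E)$ for all $t$ in some open interval $I\ni 1$; composing the analytic map $t\mapsto t\phi$ (polynomial, hence analytic into $E$) with the analytic map above gives that $t\mapsto \rho(\Lo_{f,t\phi|E})$ is analytic on $I$, and by item (1), $P_{top}(f,t\phi)=\log\rho(\Lo_{f,t\phi|E})$ on $I$, so $t\mapsto P_{top}(f,t\phi)$ is analytic near $1$.

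For item (3): convexity of $t\mapsto P_{top}(f,t\phi)$ is automatic from the variational principle (supremum of affine functions). To upgrade to strict convexity near $1$, I would compute the second derivative and identify it with the asymptotic variance $\sigma^2_\phi$ of $\phi$ with respect to $\mu_\phi$ (the usual formula $\frac{d^2}{dt^2}\big|_{t=1}P_{top}(f,t\phi) = \lim_n \frac1n\int\big(S_n(\phi-\int\phi\,d\mu_\phi)\big)^2\,d\mu_\phi$, valid because of the spectral gap and analyticity from item (2), which allows differentiation under the spectral-perturbation formula). The key point is the standard fact that $\sigma^2_\phi = 0$ if and only if $\phi - \int\phi\,d\mu_\phi$ is an $L^2(\mu_\phi)$-coboundary, and in the present setting (expansive $f$ with the Gibbs measure $\mu_\phi$ fully supported, $\supp\mu_\phi=\Sc^1$), an $L^2$-coboundary can be promoted to a continuous coboundary, i.e. $\phi$ is cohomologous to a constant — contradicting the hypothesis. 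Hence $\sigma^2_\phi>0$, and by continuity of $t\mapsto\sigma^2_{t\phi}$ (again from analyticity and the spectral gap), $\frac{d^2}{dt^2}P_{top}(f,t\phi)>0$ on a neighborhood of $1$, giving strict convexity there.

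The main obstacle is the cohomology argument in item (3): upgrading a measurable (or $L^2(\mu_\phi)$) coboundary to a continuous one so as to contradict ``not cohomologous to constant.'' This uses the positive-transfer-operator/Gibbs structure — one shows the transfer function $u$ is in fact given by a uniformly convergent series (Livšic-type regularity via the periodic specification property, or directly: the spectral gap gives exponential mixing, which forces the formal coboundary solution $u = \sum_{n\ge0}(\Lo_{f,\phi}^n/\lambda_\phi^n - \mu_\phi)\cdots$ to converge in $C^0$). I expect the cleanest route is to note that $\sigma^2_\phi=0$ together with exponential decay of correlations forces $\mathrm{Var}_{\mu_\phi}(S_n\phi)$ bounded, which by the Livšic theorem available under specification yields the continuous coboundary; the rest is routine.
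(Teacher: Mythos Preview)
Your overall strategy matches the paper's, but two points of execution differ and are worth noting.

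For the uniqueness in item (1), the paper does \emph{not} go through a Gibbs property or absolute continuity. Instead it uses a perturbation/differentiability argument: for any test potential $\psi\in E$, the spectral gap persists for $\Lo_{f,\phi+t\psi}$ when $|t|$ is small, so $\mu_{\phi+t\psi}$ is an equilibrium state for $\phi+t\psi$; comparing with a putative second equilibrium state $\mu$ for $\phi$ yields $t\int\psi\,d\mu\le t\int\psi\,d\mu_{\phi+t\psi}$, and sending $t\to 0^\pm$ (using upper semicontinuity of entropy to force limits of $\mu_{\phi+t\psi}$ to be equilibrium states, hence equal to $\mu_\phi$) gives $\int\psi\,d\mu=\int\psi\,d\mu_\phi$ for all $\psi$, so $\mu=\mu_\phi$. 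This is slicker than your Gibbs route and works uniformly for $E=BV(\Sc^1)$, where the Gibbs inequality is less immediate.

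For item (3), the ``main obstacle'' you flag --- upgrading an $L^2(\mu_\phi)$-coboundary to a continuous one --- is not actually present in the paper's argument. The paper invokes Nagaev's method as in \cite[Lecture~4]{S12}, which in the spectral-gap setting yields directly that $\sigma^2=0$ implies $\psi=c+u\circ f-u$ $\mu_\phi$-a.e.\ with $u\in E$ (so $u$ is already continuous). Since $\mu_\phi$ has full support by Lemma~\ref{medida}, the a.e.\ identity becomes an identity on all of $\Sc^1$, contradicting the hypothesis. So your Liv\v{s}ic/specification detour is unnecessary: the regularity of $u$ comes for free from the spectral-perturbation framework, and full support does the rest.
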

\begin{proof}
(1)] Initially we will show that $P_{top}(f,\phi)=\log \rho(\Lo_{f,\phi}|_{E})$ and $\mu_{\phi}$ is an equilibrium state of $f$ with respect to $\phi$. For the case $E = C^{\alpha}(\Sc^{1} , \C)$ or $C^{r}(\Sc^{1} , \C)$, since $f$ is strongly transitive and also admits generating partition by domains of injectivity, the proof is the same of the respective result in \cite[Lemma 5.7]{BC21}. For the case $E = BV(\Sc^{1})$ is enough apply \cite[Theorem 3]{BK90}. 

We will now show the uniqueness of the equilibrium state. For the case $E = C^{\alpha}(\Sc^{1} , \C)$ or $C^{r}(\Sc^{1} , \C)$ we could apply the respective result of \cite{PV22}, but to cover the case in which $E = BV(\Sc^{1})$ we are going to give another proof. Suppose that $\mu$ is an equilibrium state with respect to $\phi$. Fix a potential $\psi \in E \cap C(\Sc^{1} , \C)$. Since  $\Lo_{f,\phi}|_{E}$ has the spectral gap property, applying the Corollary \ref{analy}, for $t$ close to zero we have that $\Lo_{f,\phi+ t\psi}|_{E}$ has the spectral gap property.  In particular $\mu_{\phi + t\psi}$ is an equilibrium state with respect to $\phi + t\psi$.  Then:
$$
h_{\mu}(f) + \int \phi +t\psi d\mu \leq h_{\mu_{\phi + t\psi}}(f) + \int \phi d \mu_{\phi + t\psi} + t\int \psi d \mu_{\phi + t\psi} \leq
$$
$$
h_{\mu}(f) + \int \phi d\mu + t\int \psi d \mu_{\phi + t\psi} \Rightarrow t\int \psi d\mu \leq t\int \psi d \mu_{\phi + t\psi}.
$$
Take $\eta_{1 =} \lim_{t_{k} \searrow 0} \mu_{\phi + t_{k}\psi}$ and $\eta_{2}= \lim_{t_{k} \nearrow 0} \mu_{\phi + t_{k}\psi}$. Thus, $\int \psi d\eta_{2} \leq \int \psi d\mu \leq \int \psi d\eta_{1}$.
%Since  $\mu \mapsto h_{\mu}(f)$ is upper semicontinuous, then $\eta_{1}$ and $\eta_{2}$ also are equilibrium states with respect to $\phi$. 
On the other hand, $\int \psi d\eta_{2} = \int \psi d\mu_{\phi} = \int \psi d\eta_{1}$. In fact; 
follows from the Corollary \ref{analy} that $\R \ni t \mapsto \int \psi d\mu_{\phi +t\psi}$ is analytic for $t$ close to zero, in particular $\R \ni t \mapsto \int \psi d\mu_{\phi +t\psi}$ is continuous in $t =0$. 
%Hence
%$$\ds\int \psi d\eta_{1} = \lim_{t_{k} \searrow 0} \int \psi d \mu_{\phi + t_{k}\psi} = \int \psi d\mu_{\phi } = \lim_{t_{k} \nearrow 0}\int \psi d\mu_{\phi + t_{k}\psi} = \int \psi d\eta_{2}.$$
Therefore, $\int \psi d\mu = \int \psi d\mu_{\phi}$. Since $E \cap C(\Sc^{1} , \C)$ is dense in $C(\Sc^{1} , \C)$, we conclude that $\mu = \mu_{\phi}$.\\

(2)] This item is a direct consequence of the previous item and Corollary \ref{analy}.\\

(3)] Suppose that $\phi$ is not cohomologous to a constant. Since  $\mathcal{L}_{f , \phi}|_{E}$ has the spectral gap property then, analogously the ideas of the respective result in \cite[Proposition 5.8]{BC21} or \cite{BCF22}, we have the strict convexity of $(1 -\epsilon , 1 + \epsilon) \ni t \mapsto P_{top}(f , t\phi)$. For the reader's convenience, we will highlight the key points of the proof.

Let $\lambda_{\phi}$ be denoting $\rho(\Lo_{f,\phi|E})$ and fix $\psi = \phi - \int \phi d\mu_{\phi}$. Then the function $T(t):=\dfrac{\lambda_{\phi + t \psi}}{\lambda_{\phi}}$ is well defined for $t \in \R$ in a small neighbourhood of zero and is analytic. By Nagaev's method we have $\sigma^2_{f,\phi}(\psi)=-D_t^2 T(t)|_{t=0}$, where $\sigma^2:=\sigma^2_{f,\phi}(\psi)$ is the variance of the \textit{Central Limit Theorem} with respect to dynamics $f$, probability $\mu_{\phi}$ and observable $\psi$. Moreover $\sigma = 0$ if and only if  there exist $c \in \R$ and $u \in E$ such that 
$$\psi(x)= c + u\circ f(x) - u(x) , \text{ for } \mu_{\phi}\text{-a.e. } x,$$
(see \cite[Lecture 4]{S12} for more details). 
 Define $G(t) := P_{top}( f , \phi + t\psi)$, then $t \mapsto P_{top}(f , t\phi) $ is strictly convex on a neighborhood of $1$ if and only if $G$ is strictly convex on a neighborhood of $0$. Besides that $G'(0) = 
 \int \psi  d\mu_{\phi} = 0$. Note that:
 $$
 \sigma^{2} = \big(G'(0)\big)^{2} + G''(0) = G''(0).
 $$
 Suppose,  by contradiction, that $t \mapsto P_{top}(f , t\phi) $ is not strictly convex on a neighborhood of $1$, then $0 = G''(0) = \sigma^{2} = 0$. 
 Hence, by Nagaev's method, there exist $c \in \R$ and $u \in E$ such that 
$$\phi= c + u\circ f(x) - u(x) , \text{ for } \mu_{\phi}\text{-a.e. } x.$$
Since  $\mu_{\phi}$ has full support, then $\phi \equiv c + u\circ f - u$. Thus 
%the Birkhoff time average  $\dfrac{1}{n} S_n \log|Df|$ converges uniformly to $c$. Applying \cite[Lemma 1.9]{T10},
 $\phi $ will be cohomologous to a constant. This contradicts the hypothesis.
\end{proof}

\begin{corollary}\label{corhyexp}
Let $\phi $ be a H\"older continuous potential. Then: $\phi$ is hyperbolic if and only if  $\phi$ is expanding.
\end{corollary}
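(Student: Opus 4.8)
The plan is to split the equivalence into its two implications and reduce each to facts already established above. The implication \emph{hyperbolic $\Rightarrow$ expanding} is essentially immediate and was already noted in the discussion preceding the statement: if $\eta$ is an equilibrium state for $\phi$ with $\lambda(\eta)=0$, the Margulis--Ruelle inequality gives $h_{\eta}(f)\le\max\{0,\lambda(\eta)\}=0$, so $\eta$ is a zero-entropy equilibrium state and $\phi$ is not hyperbolic. If $\eta$ is only invariant and not ergodic, I would first pass to its ergodic decomposition: by the proposition that $\lambda(\cdot)\ge 0$ on $\mathcal{M}_1(f)$, almost every ergodic component has zero Lyapunov exponent and, integrating the upper semicontinuous map $\mu\mapsto h_{\mu}(f)+\int\phi\,d\mu$ against the decomposition, almost every component is again an equilibrium state, so the inequality applies componentwise. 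Since $\phi$ is Hölder, nothing more is needed here.

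For the converse I would argue by contraposition: assuming $\phi$ is \emph{not} hyperbolic, I contradict the expanding property. First I would observe that a non-hyperbolic potential cannot be cohomologous to a constant: if $\phi=K+u\circ f-u$ then $\int\phi\,d\mu=K$ for every $\mu\in\mathcal{M}_1(f)$ and $P_{top}(f,t\phi)=tK+h_{top}(f)$, so $\beta(\phi)=K<K+h_{top}(f)=P_{top}(f,\phi)$ because $h_{top}(f)>0$ in our setting; this makes $\phi$ hyperbolic, a contradiction. Hence $\phi$ is not cohomologous to a constant. Next I would apply Proposition~\ref{propha}: since $h_{top}(f)>0$ and $\phi$ is not hyperbolic, there exist a maximizing measure $\eta$ with $h_{\eta}(f)=0$ and a number $t_0\in(0,1]$ with $P_{top}(f,t\phi)=t\int\phi\,d\eta$ for all $t\ge t_0$. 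As $t_0\le 1$, the pressure function $t\mapsto P_{top}(f,t\phi)$ coincides with the linear map $t\mapsto t\int\phi\,d\eta$ on all of $[1,+\infty)$; in particular it is affine on a right-neighbourhood of $t=1$.

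Now suppose, towards a contradiction, that $\phi$ is also expanding. Being $\alpha$-Hölder for some $\alpha\in(0,1]$, we have $\phi\in E=C^{\alpha}(\Sc^1,\C)$, and by Proposition~\ref{propexp} (applied in its case (1), the expanding case) the transfer operator $\mathcal{L}_{f,\phi}$ has the spectral gap property on $E$. Since $\phi$ is not cohomologous to a constant, Lemma~\ref{Lemapress}(3) then gives that $t\mapsto P_{top}(f,t\phi)$ is strictly convex on a neighbourhood of $t=1$ --- which is impossible for a function that is affine on a right-neighbourhood of $t=1$. This contradiction shows $\phi$ is not expanding, i.e. the contrapositive of \emph{expanding $\Rightarrow$ hyperbolic}. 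Together with the first implication, the equivalence follows.

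The crux, as I see it, is precisely the clash in the last paragraph: for a non-hyperbolic potential the ergodic-optimization picture of Proposition~\ref{propha} forces the pressure to be \emph{linear} beyond $t_0\le 1$, whereas the expanding hypothesis would supply, via the spectral gap (Proposition~\ref{propexp}) and Nagaev's method (Lemma~\ref{Lemapress}(3)), \emph{strict convexity} near $t=1$ --- unless $\phi$ is cohomologous to a constant, a case that has already been excluded. The remaining work is routine bookkeeping: verifying that the hypotheses of the cited statements hold simultaneously, and the ergodic-decomposition reduction in the easy direction.
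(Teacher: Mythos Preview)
Your proof is correct and follows essentially the same route as the paper: both arguments pivot on the clash between the linearity of $t\mapsto P_{top}(f,t\phi)$ on $[t_0,+\infty)$ furnished by Proposition~\ref{propha} and the strict convexity near $t=1$ coming from Proposition~\ref{propexp} and Lemma~\ref{Lemapress}(3), after disposing of the cohomologous-to-constant case. The only cosmetic difference is that the paper argues the hard direction directly (assume expanding, split on cohomology, conclude hyperbolic), whereas you phrase it as a contraposition wrapped in a contradiction; your extra care with the ergodic decomposition in the easy direction is a nice touch the paper leaves implicit.
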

\begin{proof}
We already know that if $\phi$ is hyperbolic, then $\phi$ is expanding. Suppose that the potential $\phi$ is expanding. If $\phi$ is cohomologous to a constant then $P_{top}f, \phi) = h_{top}(f) + \int \phi d \mu$, for all $f$-invariant probability $\mu$. Since  $h_{top}(f) >0$, we have that $\phi$ is hyperbolic. If $\phi$ is not cohomologous to a constant, applying the Lemmas \ref{propexp} and \ref{GapAnalt}, then $t \mapsto P_{top}(f , t\phi)$ is strictly convex on a neighbourhood of $1$. It follows from item (2) of Proposition \ref{propha} that $\phi$ is hyperbolic.
\end{proof}

It is important to note that for continuous potentials, the previous result does not necessarily hold.

\

\subsection{Denseness of good potentials}

In this section, we will prove Theorem \ref{maintheogoo}.

It follows of the Proposition \ref{propexp} and Lemma \ref{GapAnalt} that if $\phi $ is a H\"older continuous potential and $t\phi$ is expanding for all $t \in \R$, then $f$ has no phase transition with respect to $\phi$, in addition, we obtain other thermodynamic informations. Therefore, we will show that this property is dense in the uniform topology.

Given $f$ a dynamics in our context,
we define the following set:
\begin{align*}
    \mathcal{R}:=\Big\{\phi:\mathbb{S}^{1}\rightarrow\mathbb{R}\in C^{0}; \sup\limits_{\lambda(\mu)=0}\int\phi d\mu <\sup\limits_{\lambda(\nu)>0}\int\phi d\nu\Big\}
\end{align*}

Note that $\phi \in \mathcal{R}$ if and only if there is no $\mu \in \mathcal{M}_{\max}(\phi)$ with $\lambda(\mu) = 0$. Furthermore, if $\phi \in \mathcal{R}$ then $t\phi \in \mathcal{R}$ and is not cohomologous to a constant for all $t > 0$. 

\begin{lemma}\label{claim1}
$\mathcal{R}$ is open and dense in the uniform topology.
\end{lemma}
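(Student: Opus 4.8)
\textbf{Proof plan for Lemma \ref{claim1}.} The plan is to prove openness and denseness separately, exploiting the characterization of $\mathcal{R}$ in terms of maximizing measures together with the continuity of the Lyapunov functional established in Lemma \ref{expconti} and the ergodic-optimization density result (Theorem \ref{morris}).

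\emph{Openness.} Suppose $\phi \in \mathcal{R}$, so that $a:=\sup_{\lambda(\mu)=0}\int\phi\,d\mu < b:=\sup_{\lambda(\nu)>0}\int\phi\,d\nu$. First I would note that the set $K_{0}:=\{\mu\in\mathcal{M}_{1}(f):\lambda(\mu)=0\}$ is a closed (hence compact, in the weak$^{*}$ topology) subset of $\mathcal{M}_{1}(f)$, by Lemma \ref{expconti}; in particular the supremum defining $a$ is attained. For $\psi\in E$ with $\|\phi-\psi\|_{\infty}$ small we have $\big|\int\phi\,d\mu-\int\psi\,d\mu\big|\le\|\phi-\psi\|_{\infty}$ for every $\mu\in\mathcal{M}_{1}(f)$, so $\sup_{\lambda(\mu)=0}\int\psi\,d\mu\le a+\|\phi-\psi\|_{\infty}$; and picking a single measure $\nu_{0}$ with $\lambda(\nu_{0})>0$ and $\int\phi\,d\nu_{0}$ close to $b$ gives $\sup_{\lambda(\nu)>0}\int\psi\,d\nu\ge\int\psi\,d\nu_{0}\ge\int\phi\,d\nu_{0}-\|\phi-\psi\|_{\infty}$. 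Choosing $\|\phi-\psi\|_{\infty}<(b-a)/3$ (say) makes the first quantity strictly smaller than the second, so $\psi\in\mathcal{R}$. Hence $\mathcal{R}$ is open.

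\emph{Denseness.} Here I would invoke Theorem \ref{morris} with $\mathcal{U}:=\{\mu\in\mathcal{M}_{e}(f):\lambda(\mu)>0\}$. The point to verify is that $\mathcal{U}$ is dense in $\mathcal{M}_{e}(f)$: since $f$ has the periodic specification property and is transitive, periodic orbit measures are dense in $\mathcal{M}_{1}(f)$, and because $f$ is transitive it has no periodic attractor, so by the preceding proposition every periodic measure $\nu=\frac1k\sum_{i=0}^{k-1}\delta_{f^{i}(p)}$ has $\lambda(\nu)=\frac1k\log|Df^{k}(p)|\ge 0$; in fact for a transitive $C^{1}$ local diffeomorphism with break points one can arrange periodic points with $\lambda>0$ arbitrarily close to any given ergodic measure (the set of periodic measures with strictly positive exponent is already dense, since a non-trivial transitive expanding-on-average dynamics has an abundance of hyperbolic periodic orbits). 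Granting this, Theorem \ref{morris} yields that $U=\{\phi\in C(\Sc^{1},\R):\mathcal{M}_{\max}(\phi)\subset\mathcal{U}\}$ is dense in $C(\Sc^{1},\R)$. Finally I would observe that $U\subset\mathcal{R}$: if $\mathcal{M}_{\max}(\phi)\subset\mathcal{U}$ then no maximizing measure has zero exponent, which by the remark preceding the lemma is exactly the statement $\phi\in\mathcal{R}$. Combining, $\mathcal{R}\supset U$ is dense.

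\emph{Main obstacle.} The routine part is openness; the delicate point is verifying that $\mathcal{U}=\{\mu\in\mathcal{M}_{e}(f):\lambda(\mu)>0\}$ is dense in $\mathcal{M}_{e}(f)$, i.e. that one cannot be forced to use only zero-exponent ergodic measures to approximate. This is where transitivity together with the specification property is essential: density of periodic measures reduces the question to exhibiting, near any target ergodic measure, a periodic orbit with strictly positive Lyapunov exponent, and this follows because the absence of periodic attractors (guaranteed by transitivity) combined with positivity of topological entropy forces a dense supply of hyperbolic periodic orbits. Once that density is in hand, Theorem \ref{morris} does the rest and denseness of $\mathcal{R}$ follows immediately.
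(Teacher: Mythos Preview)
Your overall architecture is the same as the paper's: openness by a direct continuity/compactness estimate, and denseness by applying Theorem~\ref{morris} to $\mathcal{U}=\{\mu\in\mathcal{M}_e(f):\lambda(\mu)>0\}$ once one knows $\mathcal{U}$ is dense in $\mathcal{M}_e(f)$. Your openness argument is correct and is simply a spelled-out version of what the paper records in one line.

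The gap is in your verification that $\mathcal{U}$ is dense. You reduce to periodic measures via specification and then assert that ``the set of periodic measures with strictly positive exponent is already dense, since a non-trivial transitive expanding-on-average dynamics has an abundance of hyperbolic periodic orbits''. This is not justified: in the class of maps under consideration there can certainly be periodic orbits with $|Df^{k}(p)|=1$ (indifferent periodic points are exactly the obstruction the whole paper is about), and nothing you wrote rules out that the periodic measures you obtain from Sigmund's theorem near a given $\mu$ with $\lambda(\mu)=0$ are themselves of this type. You need an extra argument to upgrade ``periodic measures are dense'' to ``periodic measures with \emph{strictly} positive exponent are dense''.

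The paper sidesteps this entirely with a much softer trick: fix once and for all a single $\nu\in\mathcal{M}_1(f)$ with $\lambda(\nu)>0$ (such $\nu$ exists by Margulis--Ruelle and $h_{\mathrm{top}}(f)>0$), and for any $\mu$ with $\lambda(\mu)=0$ set $\nu_n=\tfrac{1}{n}\nu+(1-\tfrac{1}{n})\mu$; then $\nu_n\to\mu$ and $\lambda(\nu_n)=\tfrac{1}{n}\lambda(\nu)>0$. Since $\{\lambda>0\}$ is open in $\mathcal{M}_1(f)$ by Lemma~\ref{expconti} and $\mathcal{M}_e(f)$ is dense in $\mathcal{M}_1(f)$ by specification, each $\nu_n$ can in turn be approximated by ergodic measures with positive exponent, giving density of $\mathcal{U}$. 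This avoids any claim about hyperbolicity of individual periodic orbits. If you want to keep your periodic-orbit route, you would need to supply a genuine argument (e.g.\ showing indifferent periodic orbits are finite, or perturbing a given periodic orbit to a nearby hyperbolic one), which is more work than the convex-combination argument.
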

\begin{proof}

Firstly, since   $\mathcal{M}_{1}(f) \times \mathcal{M}_{1}(f) \times C(\mathbb{S}^{1} , \R) \ni (\mu, \nu, \phi) \mapsto (\lambda(\mu) , \int \phi d\nu)$ are continuous then $\mathcal{R}$ is open. Secondly, we define
\begin{align*}
    \mathcal{U}:=\Big\{\mu\in \mathcal{M}_{e}(f);\lambda(\mu)>0\Big\}.
\end{align*}
 Thus 
 \begin{align*}
    \mathcal{R}\supset \Big\{\phi:\mathbb{S}^{1}\rightarrow\mathbb{R}\in C^{0};\mathcal{M}_{\max}(\phi)\subset \mathcal{U} \Big\}.
\end{align*}

\

%Note that in our context $f$ has the specification property, hence $
%\overline{\mathcal{M}_{e}(f)} = \mathcal{M}_{1}(f)$ (see \cite{S74}).
 Therefore, using Morris's theorem \ref{morris}, to prove the lemma is enough to show that $\mathcal{U}$ is a dense subset in the space of ergodic probability measures.\\

%\noindent\emph{Opennes:} It's a consequence of $\mu\mapsto \lambda(\mu)$ be a continuous function. So if $\lambda(\mu)>0$ for some $f-$invariant probability $\mu$, then there exists a neighbourhood $V$ of $\mu$ such that for each $\nu \in V$ we have $\lambda(\nu)>0$.\\

\noindent\emph{Density:} Since  $f$ has positive topological entropy, applying the Margulis-Ruelle's inequality \cite{H91}, then $f$ admits some probability $\nu \in \mathcal{M}_{1}(f)$ with $\lambda(\nu)>0$. Take a probability $\mu\in \mathcal{M}_{1}(f)$ with $\lambda(\mu)=0$, define 
$$\nu_n:=\frac{1}{n}\nu+\Big(1-\frac{1}{n}\Big)\mu.$$
For any continuous observable $\varphi:\mathbb{S}^{1}\rightarrow\mathbb{R}$:
$$\int\varphi d\nu_n=\frac{1}{n}\int \varphi d\nu +\Big(1-\frac{1}{n}\Big)\int\varphi d\mu \xrightarrow[n\rightarrow\infty]{} \int\varphi d\mu,$$
hence $\lim\nu_n = \mu$. Furthermore, 
\begin{align*}
    \lambda(\nu_n)=\frac{1}{n}\lambda(\nu)+\Big(1-\frac{1}{n}\Big)\lambda(\mu)=\frac{1}{n}\lambda(\nu)>0.
\end{align*}
Note that in our context, $f$ has the specification property, hence $
\overline{\mathcal{M}_{e}(f)} = \mathcal{M}_{1}(f)$ (see \cite{S74}). Thus, $\mathcal{U}$ is dense.

\

Since  $\mathcal{U}$ is dense, by Morris's theorem \ref{morris} we have that $\mathcal{R}$
is dense in $C(\Sc^{1} , \R)$. %We have $U\subset \mathcal{R}$, which means that $\mathcal{R}$ is open and dense.
\end{proof}

 We define now
 $$\mathcal{H}:=\mathcal{R}\cap(-\mathcal{R}).$$

Note that $\mathcal{H}$ is open and dense in $C(\Sc^{1} , \R)$ and if $\phi \in \mathcal{H}$ then:

\begin{enumerate}
    \item $t\phi$ is expanding for all $t \in \R;$
    \item $t\phi$ is not cohomologous to a constant for all $t \in \R\setminus\{0\}.$
\end{enumerate}

%In particular, if $\phi \in \mathcal{H}$ then we can apply the Lemma \ref{GapAnalt} for $t\phi$. \\

\

\begin{proof}[Proof of the Theorem \ref{maintheogoo}]
Note that if $\phi \in \mathcal{H}$ is H\"older continuous, %and has bounded variation
then we can apply the Lemma \ref{GapAnalt} in $t\phi$, for all $t \in \R$.  Thus $\phi $ has no thermodynamic phase transition, and the topological pressure function $\R \ni t \mapsto P_{top}(f , t\phi)$ is strictly convex. %Moreover, $\mathcal{H}$ is dense.
%(2)] Take $\phi \in \mathcal{H} = \mathcal{R}\cap(-\mathcal{R})\cap E$. Suppose by absurd that there is a sequence  $\phi_{n} \in E$ such that $\phi_{n}$ has thermodynamic phase transition and $\phi_{n}$ converges to $\phi$, in the uniform topology. Since $\mathcal{R}\cap(-\mathcal{R})$ is open subset, then $\phi_{n} \in H$. Hence $\phi_{n}$ has no thermodynamic phase transition, which is absurd.
\end{proof}

\

\subsection{Thermodynamic and spectral phase transitions}

In this section, we will prove the Theorems \ref{maintheA} and \ref{maintheorB}, as well as the Corollary \ref{maincoroA}.

\begin{proof}[Proof of the Theorem \ref{maintheA}]
(1) $\Rightarrow $ (2)] Suppose that $\phi$ has no thermodynamic phase transition. Fix $t \in \R$ and $\psi := t \phi$. Note that $\psi$ is expanding. In fact, if $\psi$ were not expanding, then $\psi$ would not be hyperbolic. Applying the Proposition \ref{propha}, we would have that $\psi$ has a thermodynamic phase transition, which is absurd. Thus $\psi$ is expanding, in fact $t\phi $ is expanding for all $t \in \R$. Remember that by Corollary \ref{corhyexp} we have $t\phi$ hyperbolic, for all $t \in \R$. Follow from the Proposition \ref{propexp} that $\mathcal{L}_{f, t\phi|E}$ has the spectral gap property, for all $t \in \R$.\\

(2) $\Rightarrow $ (1)] This follows directly from the Lemma \ref{Lemapress}.\\

(1) $\Leftrightarrow $ (3)] Suppose now that $\phi$ is not cohomologous to a constant. It follows by the proof of the equivalence between items (1) and (2) that $\phi$  has no thermodynamic phase transition if and only if $t\phi$ is expanding and hyperbolic for all $t \in \R$. Thus, applying the Proposition \ref{propexp} and Lemma \ref{Lemapress}, we have that the items (1) and (3) are equivalent.\\

Finally, applying item (1) from Lemma \ref{Lemapress}, the absence of a spectral phase transition for $\phi$ implies the uniqueness of the equilibrium state of $t\phi$, for all $t \in \R$.
\end{proof}

\

\begin{proof}[Proof of the Theorem \ref{maintheorB}] Suppose that $\phi $ is a H\"older continuous potential that has a phase transition. In particular, $\phi$ is not cohomologous to a constant. Remember initially that given $t \in \R$ we have that $t\phi$ is expanding if and only if $t\phi$ is hyperbolic, by Corollary \ref{corhyexp}.\\

(1)] Define $A := \{t \in \R : t\phi \text{ is expanding } \}$. 
Since $0 \in A$ and being expanding is an open property, then $A \subset \R$ is an open subset containing $0$. If $A = \R$ then the transfer operator $\mathcal{L}_{f, t\phi|C^{\alpha}}$ would have the spectral property for all $t \in \R$, by Proposition \ref{propexp}. In particular, $\R \ni t \mapsto P_{top}(f , t\phi)$ would be analytic by Lemma \ref{GapAnalt}. Since $\phi$ has a phase transition, by hypothesis, there is $\tilde{t} \neq 0$ such that $\tilde{t}\phi$ is not hyperbolic. Applying the Proposition \ref{propha}, if $\tilde{t} > 0$ then $t\phi$ is not hyperbolic for all $t \geq \tilde{t}$, otherwise $\tilde{t} < 0$ and $t\phi$ is not hyperbolic for all $t \leq \tilde{t}$.
Thereby, $A$ is an interval containing $0$.
In particular, there are $\hat{t}_{1} \in [-\infty , 0)$ and $\hat{t}_{2} \in (0 , +\infty]$ such that $\hat{t}_{1}\phi$ and $\hat{t}_{2}\phi$ are not hyperbolic, and $t_{1} \neq -\infty$ or $t_{2} \neq +\infty$. By Proposition \ref{propha} the topological pressure function $\R \ni t \mapsto P_{top}(f , t\phi)$ is linear in $[-\infty , t_{1}]$ and $[t_{2} , +\infty]$, furthermore $t\phi$ is expanding for all $t \in (t_{1}, t_{2})$. Applying the Proposition \ref{propexp} and item (2), (3) of the Lemma \ref{Lemapress}, the function $\R \ni t \mapsto P_{top}(f , t\phi)$ is analytic and strictly convex in $(t_{1},t_{2})$.\\

(2)] Since  $t\phi$ is expanding, for all $t \in (t_{1}, t_{2})$, follow of the Proposition \ref{propexp} that $\mathcal{L}_{f,\phi|E}$ has the spectral gap property. On the other hand, since  $\R \ni t \mapsto P_{top}(f , t\phi)$ is linear in $[-\infty , t_{1}]$ and $[t_{2} , +\infty]$, follow of the item (3) of Lemma \ref{Lemapress} that $\mathcal{L}_{f,t\phi|E}$ has no spectral gap property for $t \in [-\infty , t_{1}] \cup [t_{2} , +\infty]$.
\end{proof}

%XXXXXXXXXXXXX

%(1)] Following from the proof of the previous theorem, that there are $\hat{t}_{1} \in [-\infty , 0)$ and $\hat{t}_{2} \in (0 , \infty]$ such that $\hat{t}_{1}\phi$ and $\hat{t}_{2}\phi$ are not expanding, and $t_{1} \neq -\infty$ or $t_{2} \neq +\infty$. In particular, $\hat{t}_{1}\phi$ and $\hat{t}_{2}\phi$ are not hyperbolic. By Proposition \ref{propha} the topological pressure function $\R \ni t \mapsto P_{top}(f , t\phi)$ is linear in $[-\infty , t_{1}]$ and $[t_{2} , +\infty]$, furthermore $t\phi$ is hyperbolic for all $t \in (t_{1}, t_{2})$. Applying the Proposition \ref{propexp} and item (2), (3) of the Lemma \ref{Lemapress}, the function $\R \ni t \mapsto P_{top}(f , t\phi)$ is analytic and strictly convex in $(t_{1},t_{2})$.\\

%(2)] Since  $t\phi$ is hyperbolic, for all $t \in (t_{1}, t_{2})$, follow of the Proposition \ref{propexp} that $\mathcal{L}_{f,\phi|E}$ has spectral gap property. On the other hand, since  $\R \ni t \mapsto P_{top}(f , t\phi)$ is linear in $[-\infty , t_{1}]$ and $[t_{2} , +\infty]$, follow of the item (3) of Lemma \ref{Lemapress} that $\mathcal{L}_{f,t\phi|E}$ has no spectral gap property for $t \in [-\infty , t_{1}] \cup [t_{2} , +\infty]$.
%\end{proof}

%XXXXXXXXXXXXXXX

\

\begin{proof}[Proof of the Corollary \ref{maincoroA}]
Suppose that $f$ is Manneville-Pomeau-like and $\phi$ is a H\"older continuous potential. Suppose,  by contradiction, that $\phi$ has two phase transitions. By item (1) of the Theorem \ref{maintheorB}, there are $ t_{1} < 0$ and $t_{2} > 0$ such that $t \mapsto P_{top}(f , t\phi)$ is not analytic in $t_{1}$ and $t_{2}$. In fact, $t_{1}\phi$ and $t_{2}\phi$ are not hyperbolic. In particular, by item (1) of the Proposition \ref{propha}, there are $\eta_{1}$ maximizing probability of $-\phi$ and $\eta_{2}$ maximizing probability of $\phi$ with $\lambda(\eta_{1}) = \lambda(\eta_{2}) = 0$. 
Since $\log|Df(x)| \geq 0$ for all $x \in \Sc^{1}$ and $\log |Df(x)| = 0$ if and only if $x  = 0$ we have that $\delta_{0}$ is the unique $f$-invariant and ergodic probability with zero Lyapunov exponent.

%Note that there is a unique probability with zero Lyapunov exponents. Although this is known, we will give proof for the reader's convenience.\\

%\emph{Claim: $\delta_{0}$ is the unique $f-$invariant and ergodic probability with zero Lyapunov exponent.\\}

%\noindent In fact, suppose that $\eta \in \mathcal{M}_{e}(f)$ and $\lambda(\eta) = 0$. By Birkhoff's Ergodic Theorem, there is $x \in \Sc^{1}$ such that $\eta = \lim\frac{1}{n}\sum_{i=0}^{n-1}\delta_{f^{i}(x)}$. Given $\epsilon > 0$, let $\delta > 0$ such that $\log |Df(y)| \geq \delta$ for all $d(y ,  0) \geq \epsilon$. For each $n \in \N$ define:
%$$
%A_{n} := \{i \leq n-1 : d(f^{i}(x) , 0) \geq \epsilon\}.
%$$
%Since $\lambda(\eta) = 0$, then:
%$$
%\frac{1}{n}\sum_{i=0}^{n-1}\log |Df(f^{i}(x))| = \frac{1}{n}\Big[ \sum_{i \in A_{n}}\log |Df(f^{i}(x))| + \sum_{i \notin A_{n} \text{ and }i \leq n-1}\log |Df(f^{i}(x))| \Big] \geq  
%$$
%$$\frac{1}{n} \delta \cdot \#A_{n} \Rightarrow 0 = \lambda(\eta) = \lim \frac{1}{n}\sum_{i=0}^{n-1}\log |Df(f^{i}(x))| \geq \delta \cdot \lim \frac{\#A_{n}}{n} \Rightarrow \lim \frac{\#A_{n}}{n}  = 0.
%$$
%Thus, given $\psi : \Sc^{1} \rightarrow \R$ a continuous function:
%$$
%\Big|\int \psi d\eta - \int \psi d\delta_{0} \Big| = \Big| \lim\frac{1}{n}\sum_{i=0}^{n-1}[\psi(f^{i}(x)) - \psi(0) ]\Big| = 
%$$
%$$\Big| \lim\frac{1}{n}\sum_{i \notin A_{n} \text{ and } i \leq n-1}[\psi(f^{i}(x)) - \psi(0) ]\Big| \leq \sup_{d(y , 0) < \epsilon}|\psi(y) - \psi(0)|.
%$$
%Taking $\epsilon$ converging to $0$, we have that  $\int \psi d\eta = \int \psi d\delta_{0}$. We conclude then that $\eta = \delta_{0}$.\\

Follow that $\eta_{1} = \eta_{2}$. Thus $\mathcal{M}_{1}(f) \ni \mu \mapsto \int \phi d\mu$ is a constant function. Applying \cite{T10}, we have that $\phi$ is cohomologous to a constant, which is absurd.
\end{proof}

\

\subsection{Large deviations principle}

In this section, we will prove the Corollary \ref{maincoroB}.

We will fix a H\"older continuous potential $\phi $. Define 
$$t_{1}:= \sup\{t < 0 : t\phi \text{ is not expanding }\} \;\text{ and }\; t_{2}:= \inf\{t > 0 : t\phi \text{ is not expanding }\}.$$ 
It follows from Theorem \ref{maintheA}, we have that $\phi$ has no phase transition if and only if $t_{1}= -\infty$ and $t_{2} = +\infty$. Moreover, $\mathcal{L}_{f,\phi|E}$ has the spectral gap property for all $t \in (t_{1}, t_{2}).$

As already discussed in section \ref{sect:LDR}, the specification property implies
$$
LD_{\phi, a, b} := \lim\frac{1}{n}\log \mu_{0}\Big(x \in \Sc^{1} : \frac{1}{n}\sum_{i=0}^{n-1}\phi (f^{i}(x)) \in [a ,b] \Big) =
$$
$$-h_{top}(f) + \sup\{h_{\nu}(f) : \int \phi d\nu \in [a , b]\}.
$$
On the other hand, it is known that one of the ways to calculate $LD_{\phi, a, b}$ is through the spectral gap property, given the probabilistic/functional approach derived from Gartner-Ellis’s theorem (see e.g. \cite{BCV16}).
We remember the highlight of this approach.

Define the free energy  $$\mathcal{E}_{\phi}(t):= \limsup_{n \mapsto +\infty} \frac{1}{n}\log \int e^{tS_{n}\phi }d\mu_{0},$$ where $t \in \R$ and $S_{n}\phi := \sum_{i=1}^{n-1}\phi \circ f^{i}$ is the usual Birkhoff sum.  Since  $\mathcal{L}_{f,t\phi|E}$ has the spectral gap property for all $t \in (t_{1}, t_{2})$, then the limit above does exist for $t \in (t_{1}, t_{2})$ and 
$$
\mathcal{E}_{\phi}(t) = \log \rho(\mathcal{L}_{f,t\phi|E}) - \log \rho(\mathcal{L}_{0|E}) = P_{top}(f, t\phi) - h_{top}(f).
$$

Suppose that $\phi$ is cohomologous to a constant. In this case, $\phi$ has no phase transition and $S_{\phi} = \{\int\phi d\mu_{0}\}.$

Suppose then that $\phi$ is not cohomologous to a constant. Follow from the Proposition \ref{propexp} and Lemma \ref{Lemapress} that $\mathcal{E}_{\phi}$
is analytic and stricly convex in $(t_{1} , t_{2})$. Hence, it is well defined the "local"
Legendre transform $I$ given by
\begin{equation*}
I(s)
	:= \sup_{t_{1} < t < t_{2}} \; \big\{ st-\cE_{\phi}(t) \big\}.
\end{equation*}

The domain of $I$ will be $\{\mathcal{E}_{\phi}'(t) : t \in (t_{1}, t_{2})\} = \{\int \phi d\mu_{t\phi} : t \in (t_{1}, t_{2})\}$, and $I$ will be analytic, strictly convex and non-negative. 
In fact, it is not hard to check the variational property:
$
I (\cE_{\phi}'(t) )
	= t \cE_{\phi}'(t) - \cE_{\phi}(t).
$
Moreover, $I(s)=0$ if and only if $s=\int\phi d\mu_{0}$. 
%Thus $I$ will be a strictly convex and real analytic function.

Define $\lambda_{\min} := \inf_{t \in (t_{1}, t_{2})}\int \phi d\mu_{t\phi}$ and $\lambda_{\max} := \sup_{t \in (t_{1}, t_{2})}\int \phi d\mu_{t\phi}$, where $\mu_{t\phi}$ is the equilibrium state of $f$ with respect to $t\phi$ obtained in Lemma \ref{GapAnalt}. If $t_{1} \in \R$ then take $\eta_{1} \in \mathcal{M}_{1}(f)$ a maximizing measure of $-\phi$ with zero entropy, and define $\beta_{\min} := \int \phi d\eta_{1}$. Otherwise, define $\beta_{\max}:= \lambda_{\min}$. Analogously, we can define $\beta_{\max}$. Applying \cite{T10}, we have that $S_{\phi} = \{\int \phi d\nu : \nu \in \mathcal{M}_{1}(f)\}$. Thus, we conclude that 
    $S_{\phi} = [\beta_{\min} \,,\, \beta_{\max}] \supset [\lambda_{\min}\,,\, \lambda_{\max}] $ and the domain of $I$ will be $(\lambda_{\min}\,,\, \lambda_{\max})$.

As a consequence
of the differentiability of the free energy function,
applying the Gartner-Ellis's theorem (see e.g.~\cite{DZ98,RY08}), we have that
$$
LD_{\phi, a,b}
	=-\inf_{s\in[a,b]} I(s),
$$
for all interval $[a,b]\subset (\lambda_{\min} , \lambda_{\max})$.
In particular, in our context, 
$$I(s) = h_{top}(f) - \sup\{h_{\nu}(f) : \int\phi d\nu = s\}.$$

\begin{lemma}
 \begin{itemize}
     \item[(i)] If $t_{1} \in \R$ then for all $[a , b] \subset [\beta_{\min} \,,\, \lambda_{\min} ]$ we have $LD_{\phi,a,b} = -h_{top}(f) + t_{1}(\beta_{\min} - a);$\\
     \item[(ii)] If $t_{2} \in \R$ then for all $[a , b] \subset [\lambda_{\max}\,,\, \beta_{\max} ]$ we have $LD_{\phi,a,b} = -h_{top}(f) + t_{2}(\beta_{\max} - b);$
 \end{itemize}
 \end{lemma}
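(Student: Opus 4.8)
The plan is to compute $LD_{\phi,a,b}$ directly from the entropy--supremum formula
$$
LD_{\phi,a,b} = -h_{top}(f) + \sup\Big\{h_{\nu}(f) : \nu \in \mathcal{M}_1(f),\ \textstyle\int \phi\, d\nu \in [a,b]\Big\},
$$
for intervals $[a,b]$ lying in the ``linear'' part of the Birkhoff spectrum, say $[a,b] \subset [\beta_{\min}, \lambda_{\min}]$ in case (i). The key point is that on this range the variational problem is solved by measures supported near the zero-entropy maximizing measure $\eta_1$ of $-\phi$, and the supremum is governed by the slope $t_1$. First I would recall from the proof of Theorem~\ref{maintheorB} that $t_1\phi$ is not hyperbolic, so by item~(1) of Proposition~\ref{propha} there is a maximizing measure $\eta_1$ of $-\phi$ (equivalently, minimizing $\int\phi\,d\nu$ over $\mathcal{M}_1(f)$, giving $\int\phi\,d\eta_1 = \beta_{\min}$) with $h_{\eta_1}(f)=0$, and $P_{top}(f,t\phi) = t\int\phi\,d\eta_1 = t\,\beta_{\min}$ for all $t \le t_1$.

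Next I would pass through the pressure function. Since the topological pressure is the Legendre-type envelope of the entropy over invariant measures, the supremum $\sup\{h_\nu(f) : \int\phi\,d\nu = s\}$ equals the concave conjugate of $t \mapsto P_{top}(f,t\phi)$ evaluated at $s$; more precisely, for $s$ in the appropriate range,
$$
\sup\Big\{h_\nu(f) : \textstyle\int\phi\,d\nu = s\Big\} = \inf_{t\in\R}\big(P_{top}(f,t\phi) - ts\big),
$$
with the infimum attained. On the half-line $t \le t_1$ the pressure is the line $t \mapsto t\,\beta_{\min}$, and for $s < \beta_{\min}$ (wait: actually for $s \in [\beta_{\min},\lambda_{\min}]$, since $t_1<0$ the relevant comparison flips) the infimum over $t\le t_1$ of $t\beta_{\min} - ts = t(\beta_{\min}-s)$ is attained at $t = t_1$, yielding $t_1(\beta_{\min}-s)$; and one checks using convexity of $P_{top}$ together with its analyticity and strict convexity on $(t_1,t_2)$ (Theorem~\ref{maintheorB}, via Proposition~\ref{propexp} and Lemma~\ref{Lemapress}) that taking $t$ in the analytic region cannot do better on this range. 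Hence
$$
\sup\Big\{h_\nu(f) : \textstyle\int\phi\,d\nu = s\Big\} = h_{top}(f) + t_1(\beta_{\min} - s),\qquad s \in [\beta_{\min},\lambda_{\min}],
$$
which is affine and nonincreasing in $s$ on this interval since $t_1 < 0$. Substituting into the formula for $LD_{\phi,a,b}$ and noting that the supremum over $\int\phi\,d\nu \in [a,b]$ with $[a,b]\subset[\beta_{\min},\lambda_{\min}]$ is realized at the left endpoint $a$ (largest entropy, as the expression decreases in $s$) gives exactly $LD_{\phi,a,b} = -h_{top}(f) + t_1(\beta_{\min}-a)$. Case (ii) is symmetric, working with $\psi = -\phi$ or with $t\ge t_2 > 0$: there the supremum over $[a,b]\subset[\lambda_{\max},\beta_{\max}]$ is attained at the right endpoint $b$ because the entropy envelope $h_{top}(f) + t_2(\beta_{\max}-s)$ now decreases in $s$ away from $\beta_{\max}$ as $s$ decreases (again $t_2>0$), yielding $LD_{\phi,a,b} = -h_{top}(f) + t_2(\beta_{\max}-b)$.

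The main obstacle I anticipate is the careful identification of the entropy spectrum $s \mapsto \sup\{h_\nu(f) : \int\phi\,d\nu = s\}$ with the concave conjugate of the pressure and, in particular, verifying that this conjugate is \emph{exactly} the affine function $h_{top}(f) + t_1(\beta_{\min}-s)$ on the whole segment $[\beta_{\min},\lambda_{\min}]$ rather than only at its endpoints. This requires combining: (a) the validity of the variational/Legendre duality for $P_{top}(f,t\phi)$, which holds here since $f$ is expansive and has specification and entropy is upper semicontinuous, so all the relevant measures and equilibrium states exist; (b) the fact that $P_{top}$ is globally convex, affine on $(-\infty,t_1]$ with slope $\beta_{\min}$, and strictly convex and analytic on $(t_1,t_2)$, so that its right derivative at $t_1$ jumps from $\beta_{\min}$ to $\lambda_{\min} = \lim_{t\searrow t_1}\int\phi\,d\mu_{t\phi}$, and hence the subdifferential of $P_{top}$ at $t_1$ is precisely $[\beta_{\min},\lambda_{\min}]$; this is what forces the conjugate to be affine in $s$ over that whole interval, with the supremum in $LD_{\phi,a,b}$ attained at the endpoint of $[a,b]$ closest to $\beta_{\min}$. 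Once this duality bookkeeping is in place, the formulas in (i) and (ii) drop out immediately.
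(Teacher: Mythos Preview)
Your Legendre-duality approach is sound in principle and genuinely different from the paper's direct argument. The paper, for each $d\in[\beta_{\min},\lambda_{\min}]$, proves the upper bound $h_\nu(f)\le t_1(\beta_{\min}-d)$ straight from the variational principle at the parameter $t_1$ (using $P_{top}(f,t_1\phi)=t_1\beta_{\min}$), and then obtains the matching lower bound by an explicit construction: it takes $\mu_{t_1}$ a weak-$*$ accumulation point of the equilibrium states $\mu_{t\phi}$ as $t\searrow t_1$, and sets $\nu_d=\theta\,\mu_{t_1}+(1-\theta)\,\eta_1$ with $\theta$ chosen so that $\int\phi\,d\nu_d=d$. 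That explicit measure is exactly what certifies the equality in your duality formula on this segment; your abstract route via the concave conjugate works too, but the concavity/upper-semicontinuity bookkeeping you allude to would still need to be filled in, and the paper's construction is the concrete content of that step.

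You have two computational slips, however. First, your displayed formula $\sup\{h_\nu(f):\int\phi\,d\nu=s\}=h_{top}(f)+t_1(\beta_{\min}-s)$ carries a spurious $h_{top}(f)$: at $s=\beta_{\min}$ the supremum equals $h_{\eta_1}(f)=0$, so the correct expression is simply $t_1(\beta_{\min}-s)$. Second, and more seriously, your monotonicity is reversed. Since $t_1<0$, the derivative of $s\mapsto t_1(\beta_{\min}-s)$ is $-t_1>0$, so this function is \emph{increasing} on $[\beta_{\min},\lambda_{\min}]$, and the supremum over $s\in[a,b]$ is attained at $s=b$, not $s=a$. The paper's own proof indeed concludes $LD_{\phi,a,b}=-h_{top}(f)+t_1(\beta_{\min}-b)$; the ``$a$'' in the printed statement is a typo. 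The same reversal applies to (ii): $s\mapsto t_2(\beta_{\max}-s)$ is decreasing for $t_2>0$, so the supremum over $[a,b]\subset[\lambda_{\max},\beta_{\max}]$ is at $s=a$, giving $-h_{top}(f)+t_2(\beta_{\max}-a)$.
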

 \begin{proof}
 
 [(i)] We can assume without loss of generality that $\beta_{\min} \neq \lambda_{\min}$. Let $d \in [\beta_{\min} \,,\, \lambda_{\min} ]$ be and $\nu$ be an $f$-invariant probability such that $\int\phi d\nu = d$. 
 Then $h_{\nu}(f) + t_{1}\int\phi d\nu \leq P_{top}(f, t_{1}\phi) = t_{1}\beta_{\min} \Rightarrow h_{\nu}(f) \leq t_{1}(\beta_{\min} - d)$. 
 On the other hand, let $\eta_{1} $ be a maximizing measure of $-\phi$ with zero entropy, and $\mu_{t_{1}}$ be an accumulation point of $\mu_{t\phi}$ when $t$ converges to $t_{1}$.
 Define 
 $$\nu_{d} := \frac{d - \beta_{\min}}{\lambda_{\min} -\beta_{\min}}\mu_{t_{1}} + \Big(1-\frac{d - \beta_{\min}}{\lambda_{\min} -\beta_{\min}}\Big)\eta_{1}.$$ Thus $\int\phi d\nu_{d} = d $ and $h_{\nu_{d}}(f) = t_{1}(\beta_{\min} - d).$
 Therefore, we conclude that $LD_{\phi,a,b} = -h_{top}(f) + t_{1}(\beta_{\min} - b).$

 \
 
 [(ii)] Analogously, unless you change $\phi$ by $-\phi$, we will prove the item (ii).
 \end{proof}
 \

\begin{proof}[Proof of the Corollary \ref{maincoroB}]
Note initially that, taking into account the previous discussions, the function $S_{\phi} \ni a \mapsto LD_{\phi,a,a}$ is convex. Moreover, this function is affine in $[\beta_{\min}\,,\, \lambda_{\min}] \cup [\lambda_{\max} \,,\, \beta_{\max}]$, and analytic, strictly convex, non-negative in $(\lambda_{\min} \,,\, \lambda_{\max})$. Finally, $LD_{\phi,a,b} = \inf_{s\in [a , b]} LD_{\phi,s,s}$.
Thus, defining 
$$
\Delta_{1} := \Big\{(a, b) \in \Delta : b < \lambda_{\min} \text{ or } a > \lambda_{\max}\Big\}$$
$$
\Delta_{2} := \Big\{(a, b) \in \Delta : \int\phi d\mu_{0} \in [a,b]\Big\}$$
$$
\Delta_{3} := \Big\{(a, b) \in \Delta : \lambda_{\min} \leq a \leq b < \int\phi d\mu_{0} \text{ or } \int\phi d\mu_{0} < a \leq b \leq \lambda_{\max}\Big\},$$
we conclude the proof of the corollary.
\end{proof}

\

\subsection{Multifractal analysis}

In this section, we will prove the Corollary \ref{maincoroC}.

We will fix a H\"older continuous potential $\phi $ and the notations from the previous section. We are interested in understanding the sets: 
$$
X_{\phi , a , b}:= \Big\{x \in \Sc^{1}: a \leq \liminf \frac{1}{n}\sum_{i=0}^{n-1}\phi (f^{i}(x)) \leq \limsup \frac{1}{n}\sum_{i=0}^{n-1}\phi (f^{i}(x)) \leq b\Big\}.
$$

The proof could be inspired by the ideas of \cite{BCF22}, 
which indicate that a good understanding of phase transitions implies a good understanding of multifractal analysis. However, since we are in the one-dimensional context, we will use Hofbauer's work \cite{Ho10}.
Hofbauer introduces the pressure function $\tau(t):=P_{top}(f, t\phi)$ and its Legendre transform $\hat{\tau}$ defined by \begin{equation}\label{LegTransf}
     \hat{\tau}(s)=\inf _{t \in \mathbb{R}}(\tau(t)-s t)
 \end{equation} on the set $H := \{s \in \R : st \leq \tau(t) \text{ for all } t \in \R\}$. 
In his paper, Hofbauer uses the Legendre transform $\hat{\tau}$ to characterize the entropy spectrum of Birkhoff averages. In fact, it has been proven that 
$$h_{X_{\phi , a , b}}(f) =\max _{s \in H \cap[a, b]} \hat{\tau}(s).$$

\

\begin{proof}[Proof of the Corollary \ref{maincoroC}]
It is enough to relate Hofbauer's work to our previous context:
\begin{itemize}
    \item $H = [\beta_{\min}\,,\, \beta_{\max}] = S_{\phi}$.
    \item If $s \in [\beta_{\min}\,,\, \lambda_{\min}]$ then $\hat{\tau}(s) = s(\beta_{\min}  - a)$.
    \item If $s \in [\lambda_{\max}\,,\, \beta_{\max}]$ then $\hat{\tau}(s) = s(\beta_{\max}  - a)$.
\end{itemize}
Moreover, If $s \in (\lambda_{\min}\,,\, \lambda_{\max})$ then $$\hat{\tau}(s) = \inf _{t \in \mathbb{R}}\{P_{top}(f,t\phi)-s t\} = \inf _{t \in (\lambda_{\min}\,,\,\lambda_{\max})} \{P_{top}(f,t\phi)-s t\} = 
$$
$$h_{top}(f) - \sup _{t \in (\lambda_{\min}\,,\,\lambda_{\max})}\{st - \mathcal{E}_{\phi}(t)\}= h_{top}(f)  -  I(s).$$

Thus, given $(a,b) \in \Delta$ we have:
$$h_{X_{\phi , a , b}}(f) =\max _{s \in H \cap[a, b]} \hat{\tau}(s) = h_{top}(f) - \inf_{s \in [a,b]}I(s) =  h_{top}(f) + LD_{\phi,a,b}.$$

Applying the Corollary \ref{maincoroB}, we conclude the proof of the Corollary \ref{maincoroC}.
\end{proof}

%%%%%%%%%%%%%%%%%%%%%%%%%%%%%%%%%%%%%%%
\section{Further comments and questions}\label{stratpro}

\subsection{Differentiability of the topological pressure function}

Suppose that $f : \Sc^{1} \rightarrow \Sc^{1}$ is Manneville-Pomeau-like. By Pianigiani's work \cite{Pi80}, if $f$ is a $C^{2}$-local diffemomorphism then $f$ does not admits a finite a.c.i.p.. In particular, taking the geometric potential $\phi = -\log |Df|$, we have that $\R \ni t \mapsto P_{top}(f, t\phi)$ is not analytic, but it is differentiable (see \cite{BC21}). Therefore, the absence of the analyticity of the topological pressure function does not imply the lack of differentiability, even if the potential is regular.  On the other hand, if $f$ is a $C^{1+ \alpha}$-local diffeomorphism on a neighbourhood of the indifferent fixed point and $f$ is not $C^{2}$ then $f$ admits a unique finite a.c.i.p (see \cite{Pi80}). It follows from the \cite{BC21} that $\R \ni t \mapsto P_{top}(f, t\phi)$  is not differentiable in $t = 1$. 

Thus, we proposed the following question:

\begin{question}
    If $f : \Sc^{1} \rightarrow \Sc^{1}$ is a  $C^{1+\alpha}$-local diffeomorphism then $f$ is expanding or does there exists a H\"older continuous potential $\phi : \Sc^{1} \rightarrow \R$ such that the topological pressure function $\R \ni t \mapsto P_{top}(f,t\phi)$ is not differentiable ?
\end{question}

\subsection{Uniqueness of the equilibrium states}\label{subsec:G}

The Theorem \ref{maintheA} shows that if the potential $\phi$ has no phase transition, then $t\phi$ has a unique equilibrium state for all $t \in \R$. Note that the reciprocal is not true. In fact, 
for each $\alpha \in [0 , 1]$ define the constants $$
b=b(\alpha):=\left((\frac{1}{2})^{3+\alpha}-\dfrac{4+\alpha}{4+2\alpha}\Big(\frac{1}{2}\Big)^{2+\alpha}\right)^{-1}, \,\, a=a(\alpha):=\dfrac{-b(4+\alpha)}{4+2\alpha}$$ and the polynomial $g_{\alpha} : [0 , \frac{1}{2}] \rightarrow [0 , 1]$ given by $g_{\alpha}(y)=y+ay^{3+\alpha}+by^{4+\alpha}$. We can then define a family of Manneville-Pomeau-like: 
\begin{equation}\label{difeo}
 f_{\alpha}(y)=\begin{cases}
 g_{\alpha}(y), \text{ if } 0\leq y \leq 1/2 \\
 1-g_{\alpha}(1-y), \text{ if } 1/2 < y \leq 1.
 \end{cases}
\end{equation}

%\begin{figure}[h!]
   % \centering
    %\includegraphics[scale=5]{ManPoC2}
    %\caption{}
    %\label{ManPO.c2}
%\end{figure}

Each family member will be a $C^{2}$-local diffeomorphism. It follows from the \cite{Pi80} that the geometric potential $\phi := -\log |Df_{\alpha}|$ has a unique equilibrium state. On the other hand, it follows from the \cite{BC21} that $f_{\alpha}$ has a phase transition with respect to $\phi$.

%\begin{equation}
 %f_{\alpha}(x)=\begin{cases}
 %x+2^\alpha x^{1+\alpha}, \text{ if } 0\leq x \leq 1/2 \\
 %x-2^{\alpha}(1-x)^{1+\alpha}, \text{ if } 1/2 < x \leq 1.
 %\end{cases}
 %\end{equation}
%Note that $\delta_{0}$ is an equilibrium state for the geometric potential $\phi := -\log |Df_{\alpha}|$. On the other hand, follow of \cite{Pi80} that $f_{\alpha}$ admits a unique finite a.c.i.p. $\mu$. %In fact, the Lyapunov exponent $\lambda(\mu)$ is positive. 
%Thus $\phi$
 % has two equilibrium states, however  $f_{\alpha}$ has phase transition with respect to $\phi$ (see \cite{BC21}).

Suppose now that $f : \Sc^{1} \rightarrow \Sc^{1}$ is a transitive $C^{1+\alpha}$-local diffeomorphism. By \cite{BC21}, we know that $f$ is expanding if and only if $f$ has a phase transition with respect to the geometric potential $\phi = -\log |Df|$. With a view to the previous discussion and \cite{W92}, we propose the following question:

\begin{question}
    If $f : \Sc^{1} \rightarrow \Sc^{1}$ is a $C^{1+\alpha}$-local diffeomorphism then $f$ is expanding or does there exists a H\"older continuous potential $\phi : \Sc^{1} \rightarrow \R$ such that $\phi$ has at least two equilibrium states ?
\end{question}

\subsection{Number of phase transitions}

In our setting, Theorem \ref{maintheorB} guarantees that the maximum number of phase transitions is two. However, every example known of phase transitions for regular potentials occurs with respect to the geometric potential  $\phi := -\log |Df|$. In fact, it has a unique phase transition (see e.g. \cite{Lo93,PS92,BC21}). Thus, we propose the following question:

\begin{question}
Is there  H\"older continuous potential $\phi : \Sc^{1} \rightarrow \R$
    %, with bounded variation, 
    such that the topological pressure function $\R \ni t \mapsto P_{top}(f,t\phi)$ has two phase transitions ?
\end{question}

%\subsection{Expanding versus hyperbolic potential}

\subsection{Higher dimensional case}

In the proof of our main theorems, we used the fact that our dynamics are topologically conjugate to an expanding dynamics. Thus, a natural question is whether we can obtain similar results for local diffeomorphisms that are topologically conjugate to an expanding dynamics. In particular, we are going to discuss this question for an explicit family of dynamics.

Fix $x_{1} < x_{2} < \ldots < x_{k} = x_{1}$ points in $\Sc^1$. Define 
$V(\{x_{j}\}_{j = 1}^{k})$ as the set of  maps $f : \Sc^1 \rightarrow \Sc^1 \text{ transitive } C^{r}\text{-local diffeomorphism such that }
f_{|(x_{j} , x_{j+1})
} \text{ is injective, } 
$
$f([x_{j} , x_{j+1}]) = \Sc^1 , f(x_{j}) = x_{1}, Df(x_{1}) = 1, Df(x_{j}) \geq 1 \text{ and } |Df(x)| > 1
\text{ for all } x \neq x_{j} \text{ and } j = 1 , \ldots, k .$

Let $$F : \mathbb{T}^{d} \times \Sc^1 \xrightarrow[(x , y) \mapsto  (g(x) , f_{x}(y)) ]{} \mathbb{T}^{d} \times \Sc^1$$ be  a $C^{r}$-skew-product, $r>1$ integer, such that $g$ is an expanding linear endomorphism and each $f_{x} \in V(\{x_{j}\}_{j = 1}^{k})$. In \cite{BCF22}, it is shown that $F$ is topologically conjugate to an expanding dynamics. 

\begin{example}
For each $\alpha \in [0 , 1]$, let $f_{\alpha}$ the Manneville-Pomeau-like map defined in Section \ref{subsec:G}.
Observe that $f_{\alpha} \in V(\{0 , \frac{1}{2}, 1\})$ for all $\alpha \in [0 , 1]$. Therefore, we can define the intermittent skew product 
$$F : \mathbb{T}^{d} \times \Sc^1 \xrightarrow[(x , y) \mapsto  (g(x) , f_{x}(y)) ]{} \mathbb{T}^{d} \times \Sc^1,$$ where $g$ is an expanding linear endomorphism.
\end{example}

By \cite{BCF22}, $F$ has a thermodynamic and spectral phase transition with respect to geometric-like potential $\phi^{c}(x , y) := \log |\frac{\partial F}{\partial y}(x,y)|$. More formally:

\begin{theorem}[\cite{BCF22}]
There exists $t_{0} \in (0 , 1]$ such that:\\

(i) the topological pressure function $\R \ni t \mapsto P_{top}(F , t\phi^{c}) $ is analytic, strictly decreasing and strictly convex in $(-\infty , t_{0})$ and constant equal to $h_{top}(g)$ in $[t_{0} , +\infty)$;\\

(ii) the transfer operator $\mathcal{L}_{F, t\phi^{c}}$ has the spectral gap property for all $t < t_{0}$ and has no the spectral gap property for all $t \geq t_{0}$, acting on $C^{r-1}(\mathbb{T}^{d} \times \Sc^1 , \C)$.

\end{theorem}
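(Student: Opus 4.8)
The plan is to rerun, in the fibered setting, the machinery developed in Sections~\ref{prelim}--\ref{Proofs} for the one-dimensional break-point case, using as the only external input the fact proved in \cite{BCF22} that $F$ is topologically conjugate to the expanding endomorphism $g\times(y\mapsto ky)$ of $\mathbb{T}^{d}\times\Sc^{1}$, where $k=\deg(f_{x})\ge 2$ is the common degree of the fibre maps. From this conjugacy $F$ is expansive, exact and strongly transitive, has the specification property, $\mu\mapsto h_{\mu}(F)$ is upper semicontinuous, and $h_{top}(F)=h_{top}(g)+\log k>h_{top}(g)$. The relevant potential is the fibre-geometric one, normalized (as in \cite{BC21}) so that the pressure in the statement decreases, i.e.\ $\phi^{c}(x,y)=-\log|\partial F/\partial y(x,y)|$; since $F$ is $C^{r}$ and $|\partial F/\partial y|\ge 1$, it lies in $C^{r-1}(\mathbb{T}^{d}\times\Sc^{1},\C)$, is $\le 0$, and vanishes exactly on the invariant torus $\mathcal{T}_{1}:=\mathbb{T}^{d}\times\{x_{1}\}$, on which $F$ acts as $g\times\mathrm{id}$. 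The first structural point is that the only $F$-invariant measures with vanishing fibre Lyapunov exponent $\lambda^{c}(\mu):=-\int\phi^{c}\,d\mu\ge 0$ are those carried by $\mathcal{T}_{1}$: the support of such a measure lies in the maximal $F$-invariant subset of $\{|\partial F/\partial y|=1\}\subseteq\bigcup_{j}\mathbb{T}^{d}\times\{x_{j}\}$, and since $f_{x}(x_{j})=x_{1}$ for all $x,j$ that maximal invariant set is exactly $\mathcal{T}_{1}$; hence $\sup_{\lambda^{c}(\mu)=0}h_{\mu}(F)=h_{top}(g)$.

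The transition parameter is located as follows. A relative Ruelle inequality over the uniformly expanding base (Ledrappier--Walters formula together with the vertical Margulis--Ruelle bound, cf.\ \cite{H91}) gives $h_{\mu}(F)\le h_{top}(g)-\int\phi^{c}\,d\mu$ for all $\mu\in\mathcal{M}_{1}(F)$, so for $t\ge 1$,
\[
h_{\mu}(F)+t\!\int\!\phi^{c}\,d\mu\ \le\ h_{top}(g)+(t-1)\!\int\!\phi^{c}\,d\mu\ \le\ h_{top}(g),
\]
with equality on $\mathcal{T}_{1}$; thus $P_{top}(F,t\phi^{c})=h_{top}(g)$ for $t\ge 1$. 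Mimicking Proposition~\ref{propha} and the expanding notion of \cite{PV22}, set $t_{0}:=\inf\{t\ge 0:\ t\phi^{c}\ \text{is not fibre-expanding}\}$, fibre-expanding meaning that no equilibrium state has $\lambda^{c}=0$. Then $t_{0}\le 1$, and $t_{0}>0$ because $0\cdot\phi^{c}$ is fibre-expanding ($h_{top}(F)>h_{top}(g)=\sup_{\lambda^{c}=0}h_{\mu}(F)$) and this is an open condition in $t$, since both $t\mapsto P_{top}(F,t\phi^{c})$ and $t\mapsto\sup_{\lambda^{c}=0}\{h_{\mu}(F)+t\int\phi^{c}\,d\mu\}\equiv h_{top}(g)$ are continuous. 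For $t\ge t_{0}$, $t\phi^{c}$ is not fibre-expanding, so the fibre analogue of Proposition~\ref{propha} gives $P_{top}(F,t\phi^{c})=h_{top}(g)$ with an equilibrium state carried by the proper closed set $\mathcal{T}_{1}$; as a spectral gap for $\mathcal{L}_{F,t\phi^{c}}$ would force a unique equilibrium state of full support (analogues of Lemmas~\ref{medida} and \ref{Lemapress}(1)), contradicting this, $\mathcal{L}_{F,t\phi^{c}}$ has no spectral gap on $C^{r-1}$ there.

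For $t<t_{0}$, I would first check that $t\phi^{c}$ is fibre-expanding: for $t\le 0$ one has $\sup_{\lambda^{c}(\mu)>0}\{h_{\mu}(F)+t\int\phi^{c}\,d\mu\}\ge h_{top}(F)>h_{top}(g)$ by choosing ergodic $\mu$ with $\lambda^{c}(\mu)>0$ and $h_{\mu}(F)$ close to $h_{top}(F)$ (such measures are dense among ergodic ones, exactly as in the proof of Lemma~\ref{claim1}), and for $0<t<t_{0}$ it holds by definition of $t_{0}$. Then comes the fibered analogue of Proposition~\ref{propexp}: Collet--Le~Borgne/Baladi--Keller-type transfer-operator estimates adapted to this skew product (uniformly expanding base, fibrewise non-uniformly expanding; cf.\ \cite{CL97,BJL96,Ba00}) should give $\rho_{ess}(\mathcal{L}_{F,t\phi^{c}}|_{C^{r-1}})\le e^{P_{top}(F,\,t\phi^{c}-(r-1)\log|\partial F/\partial y|)}$ and $\rho(\mathcal{L}_{F,t\phi^{c}}|_{C^{r-1}})=e^{P_{top}(F,t\phi^{c})}$, with strict inequality $\rho_{ess}<\rho$ deduced from fibre-expandingness exactly as in Proposition~\ref{propexp}. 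Quasi-compactness plus exactness of $F$ (analogue of Lemma~\ref{LemaEss}) then yields the spectral gap on $C^{r-1}$, and the analogue of Lemma~\ref{Lemapress} gives $P_{top}(F,t\phi^{c})=\log\rho(\mathcal{L}_{F,t\phi^{c}})$, a unique equilibrium state $\mu_{t\phi^{c}}$, analyticity of $s\mapsto P_{top}(F,s\phi^{c})$ near $t$, and---$\phi^{c}$ not being cohomologous to a constant (the measure of maximal entropy of $F$ has $\lambda^{c}>0$ while measures on $\mathcal{T}_{1}$ have $\lambda^{c}=0$)---strict convexity near $t$; moreover $\frac{d}{dt}P_{top}(F,t\phi^{c})=\int\phi^{c}\,d\mu_{t\phi^{c}}<0$ because $\mu_{t\phi^{c}}$ is not carried by $\mathcal{T}_{1}$. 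Since these hold near every point of $(-\infty,t_{0})$, the pressure is analytic, strictly decreasing and strictly convex on all of $(-\infty,t_{0})$; and it cannot be analytic at $t_{0}$, a function strictly convex on a left neighbourhood of $t_{0}$ and constant on $[t_{0},+\infty)$ being forced by its Taylor expansion to be constant near $t_{0}$. Combined with the second paragraph, this establishes both (i) and (ii).

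The main obstacle is the fibered transfer-operator analysis on $C^{r-1}(\mathbb{T}^{d}\times\Sc^{1},\C)$: establishing Lasota--Yorke/Hennion-type inequalities and, above all, the essential-spectral-radius bound by $e^{P_{top}(F,\,t\phi^{c}-(r-1)\log|\partial F/\partial y|)}$ in a situation where expansion is uniform in the base but only non-uniform in the fibre and the unstable direction is not a product, so that the one-dimensional statements quoted in Proposition~\ref{propexp} genuinely have to be re-proved. The relative Ruelle inequality over $g$ and the identification of the zero-fibre-exponent measures with those on $\mathcal{T}_{1}$ are secondary technical points of the same flavour.
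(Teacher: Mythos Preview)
This theorem is not proved in the present paper at all: it is quoted verbatim from \cite{BCF22} in the ``Further comments and questions'' section (Subsection~5.4), as motivation for the open questions posed afterwards. There is therefore no proof in this paper to compare your proposal against.

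That said, your sketch is a sensible blueprint---it is exactly the programme one would expect \cite{BCF22} to carry out, namely to rerun the one-dimensional machinery (Propositions~\ref{propha} and~\ref{propexp}, Lemmas~\ref{LemaEss}, \ref{medida}, \ref{Lemapress}) in the fibered setting. A few remarks are in order. First, you have silently flipped the sign of $\phi^{c}$ relative to the paper's own definition $\phi^{c}(x,y):=\log|\partial F/\partial y(x,y)|$; with the paper's sign convention the pressure would be increasing, not decreasing, so either the paper has a typo or the convention in \cite{BCF22} differs---this is cosmetic but worth flagging. Second, you are right to isolate the essential-spectral-radius estimate on $C^{r-1}(\mathbb{T}^{d}\times\Sc^{1},\C)$ as the genuine new work: the one-dimensional results of \cite{BJL96,CL97} that drive Proposition~\ref{propexp} do not apply off the shelf to a skew product whose expansion is only non-uniform in the fibre, and your proposal does not indicate how this gap is actually closed (beyond ``should give''). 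Third, your relative Ruelle inequality $h_{\mu}(F)\le h_{top}(g)-\int\phi^{c}\,d\mu$ needs a precise justification; the combination of Ledrappier--Walters with a fibrewise Margulis--Ruelle bound is the right idea, but the fibre maps here are only piecewise $C^{r}$ in the $x$-direction, so some care is required. These are the places where your outline would need to become an actual proof, and they are precisely what \cite{BCF22} is cited for.
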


However, we are still determining what happens to this class of dynamics with respect to other regular potentials. Therefore, we propose the following questions:

\begin{question}
(i) Given a $C^{r-1}$-potential $\phi$, can we describe the topological pressure function $t\mapsto P_{top}(F,t\phi)$?

(ii) Is there a dense subset of $C^{r-1}$-potentials such that $F$ has no thermodynamic phase transition with respect to its potentials? 

(iii) Is there a dense subset of $C^{r-1}$-potentials such that $F$ has a unique equilibrium state with respect to its potentials?

(iv) Is there a dense subset of $C^{r-1}$-potentials such that $\mathcal{L}_{F, \phi}$ has the spectral gap property, acting on $C^{r-1}(\mathbb{T}^{d} \times \Sc^1 , \C)$? 
\end{question}

%%%%%%%%%%%%%%%%%%%%%%%%%

\vspace{.3cm}
\subsection*{Acknowledgements}
This work is part of the second author's PhD thesis at the Federal University of Bahia. TB was partially supported by CNPq (Grants PQ-2021) and CNPq/MCTI/FNDCT project 406750/2021-1, Brazil.  AF was supported by CAPES-Brazil. The authors are deeply
grateful to Paulo Varandas for their useful comments. We thank the anonymous referee for the careful reading of the manuscript.

%%%%%%%%%%%%%%%%%%%%%%

%%%%%%%%%%%%%%%%%%%%%%%%%%%%%%%%%%%%
\bibliographystyle{alpha}

\end{document}